\numberwithin{equation}{section}
\newcommand{\tr}[1]{\mathrm{tr}\,{#1}}
\newcommand{\e}{\varepsilon}
\newcommand{\R}{\mathbb R}
\newcommand{\hn}{\mathcal{H}^{N-1}}
\newcommand{\ap}{\mathrm{ap}}
\newcommand{\beq}{\begin{equation}}
\newcommand{\eeq}{\end{equation}}
\newcommand{\dive}{\mathrm{div}\,}
\newcommand{\curl}{\mathrm{curl}\,}
\newcommand{\dz}{\partial_z}
\newcommand{\dx}{\partial_x}
\newcommand{\dy}{\partial_y}
\newcommand{\dxx}{\dx^2}
\newcommand{\dyy}{\dy^2}
\newcommand{\dxy}{\partial_{xy}}
\newcommand{\dZ}{\partial_\xi}
\newcommand{\dE}{\partial_\eta}
\newcommand{\dZZ}{\partial_\xi^2}
\newcommand{\dEE}{\partial_\eta^2}
\newcommand{\dEZ}{\dE\dZ}
\newcommand{\dZE}{\dZ\dE}
\newcommand{\dZz}{\dZ\dz}
\newcommand{\dzZ}{\dz\dZ}
\newcommand{\dEz}{\dE\dz}
\newcommand{\dzE}{\dz\dE}
\let\oldmp\mp
\renewcommand\mp{m^+}
\newcommand{\mm}{m^-}
\newcommand{\ex}{\mathbf{x}}
\newcommand{\Sig}{\Sigma_{\xi\eta}}
\newcommand{\ag}{AG^{3D}(\Omega)}
\newcommand{\ago}{AG^{3D}_0(\Omega)}
\newcommand{\ssubset}{\subset\joinrel\subset}
\newcommand{\mres}{\mathbin{\vrule height 1.6ex depth 0pt width
0.13ex\vrule height 0.13ex depth 0pt width 1.3ex}}
\newcommand{\parallelsum}{\mathbin{\!/\mkern-5mu/\!}}
\newcommand\restr[2]{{
  \left.\kern-\nulldelimiterspace 
  #1 
  \right|_{#2} 
  }}
\newtheorem{conjecture}{Conjecture}
\newtheorem{theorem}{Theorem}[section]
\newtheorem{proposition}[theorem]{Proposition}
\newtheorem{lemma}[theorem]{Lemma}
\newtheorem{definition}{Definition}[section]
\newtheorem{remark}[theorem]{Remark}
\begin{document}

\title{\bf Nonlinear approximation of 3D smectic liquid crystals: sharp lower bound and compactness}
\author{Michael Novack\thanks{%
Department of Mathematics, The University of Texas at Austin, Austin, TX, USA }
\and Xiaodong Yan\thanks{%
Department of Mathematics, The University of Connecticut, Storrs, CT, USA } }
\maketitle

\begin{abstract}
We consider the 3D smectic energy%
\begin{equation*}
\mathcal{E}_{\epsilon }\left( u\right) =\frac{1}{2}\int_{\Omega }\frac{1}{\varepsilon }%
\left( \dz u-\frac{(\dx u)^{2}+(\dy u)^{2}}{2}\right) ^{2}+\varepsilon \left(
\dxx u+\dyy u\right) ^{2}dx\,dy\,dz.
\end{equation*}%
The model contains as a special case the well-known 2D Aviles-Giga model. We prove a sharp lower bound on $\mathcal{E}_{\varepsilon}$ as $\e \to 0$ by introducing 3D analogues of the Jin-Kohn entropies \cite{JinKoh00}. The sharp bound corresponds to an equipartition of energy between the bending and compression strains and was previously demonstrated in the physics literature only when the {\color{black} approximate} Gaussian curvature of each smectic layer vanishes. Also, for $\varepsilon _{n}\rightarrow 0$ and an energy-bounded sequence $\{u_n \}$  with $\|\nabla u_n\|_{L^{\color{black} p}(\Omega)},\,\|\nabla u_n\|_{L^2(\partial \Omega)}\leq C$ {\color{black} for some $p>6$}, we obtain compactness of $\nabla u_{n}$ in $L^{2}$ assuming that $\Delta_{xy}u_{n}$ has constant sign for each $n$.
\end{abstract}


{\color{black}
\section{\color{black}Introduction}

\label{sec:intro}
{\color{black}In this article, we analyze the energies
\begin{equation}
\mathcal{E}_{\varepsilon }\left( u\right) =\frac{1}{2}\int_{\Omega }\left[ 
\frac{1}{\varepsilon }\left( \partial _{z}u-\frac{1}{2}\left\vert \nabla
_{\bot }u\right\vert ^{2}\right) ^{2}+\varepsilon \left( \Delta _{\bot
}u\right) ^{2}\right] d\mathbf{x}, \label{3denergy}
\end{equation}
which represent the free energy of a smectic-A liquid crystal; see \Cref{subsec:phys} for a discussion of the relevant physics literature regarding smectics. Here $\Omega\subset \mathbb{R}^{3}$ is a bounded domain with Lipschitz boundary, and ${\bf x}=(x,y,z)$. The subscript ``$\bot$" denotes the restriction to the $x,y$ variables of a differential operator or the projection from $\R^3$ to $\R^2$, so
\begin{equation*}
    \nabla_\bot u= (\partial_x u ,\partial_y u)\quad\textup{and}\quad \Delta_\bot u = \partial^2_x u +\partial^2_y u.
\end{equation*}
and if $m=(m_1,m_2,m_3) \in \R^3$,
\begin{equation*}
    m_\bot = m_1\hat{x}+m_2\hat{y} \quad \textup{and}\quad m=(m_\bot,m_3) \in \R^3.
\end{equation*}
Our main interest is the asymptotic behavior of energies \eqref{3denergy} as $\e\rightarrow 0$, which corresponds to the regime in which the intrinsic length scale $\varepsilon$, cf. \eqref{e definition}, is vanishingly small compared to $\Omega$. 

We 
prove the following main results when $\varepsilon \rightarrow 0$: \vspace{0.1cm}

\begin{itemize}
\item a lower bound, sharp when $\nabla u \in (BV\cap L^\infty)(\Omega;\mathbb{R}^3)$, on $\mathcal{E}_{\varepsilon }$ when $\varepsilon
\rightarrow 0$ (\Cref{3dlbd}, \Cref{3dupbd}), and
\item a compactness theorem for the gradients of a sequence with bounded
energies (\Cref{compactness}) satisfying some additional technical assumptions.
\end{itemize}
These results generalize the authors' previous work \cite{NY20} on the 2D model
\begin{equation}
\mathcal{J}_{\varepsilon }\left( u\right) =\frac{1}{2}\int_{\Omega }\left[ 
\frac{1}{\varepsilon }\left( \partial _{z}u-\frac{1}{2}\left( \partial
_{x}u\right) ^{2}\right) ^{2}+\varepsilon \left( \partial _{x}^{2}u\right)
^{2}\right] dx\,dz,  \label{2dsingular}
\end{equation}%
to the 3D energies \eqref{3denergy}. 
For $\varepsilon _{n}\rightarrow 0$ and a sequence $%
\left\{ u_{n}\right\} $ with bounded energies $\mathcal{J}_{\varepsilon
_{n}}\left( u_{n}\right) $, we proved compactness of $\nabla u_{n}$ in $%
L^{q} $ for $1\leq q < p$ under the additional assumption $\| \nabla u_{n}\|_{L^p}\leq C$ {for some $p>6$}. Moreover, we obtained a lower bound on $\mathcal{J}%
_{\varepsilon }$ and constructed a matching upper bound using on a 1D ansatz.  

The sharp lower bound for the 3D energies, which was not previously shown in the physics literature, relies on a calibration argument which briefly works as follows. 
Letting 
\begin{equation}\label{ourent}
\Sigma(\nabla u)=\left(\Sigma_1, \Sigma_2, \Sigma_3\right)
\end{equation}
where 
\begin{eqnarray*}
&&\Sigma_1=\dz u \dx u -\frac{1}{2}\dx u (\dy u)^2-\frac{1}{6}(\dx u)^3,\\
&&\Sigma_2=- \dx u\dy u+\frac{1}{2}\dy u(\dx u)^2+\frac{1}{6}(\dy u)^3,\\
&&\Sigma_3=\frac{1}{2}(\dy u)^2-\frac{1}{2}(\dx u)^2,
\end{eqnarray*}
direct calculation shows that for $u\in H^2(\Omega)$, 
\begin{equation}\label{divesigma}
\dive \Sigma(\nabla u) = \left( \dz u - \frac{1}{2}|\nabla_\bot u|^2 \right) ( \dxx u- \dyy u).
\end{equation}
Thus by the arithmetic mean-geometric mean inequality and the divergence theorem, cf. \eqref{control}, $\mathcal{E}_{\varepsilon }\left( u\right)$ can be bounded below by 
\begin{align}\label{3dsingular-var'}
 \mathcal{E}_{\varepsilon }\left( u\right) &\geq \int_\Omega \dive \Sigma(\nabla u)\ d\ex  - \e\|\nabla_\bot u \|_{H^{\sfrac{1}{2}}(\partial \Omega)}^2 ,
\end{align} 
with approximate equality when  
\begin{equation}\label{appeq}
\frac{1}{\e}{\int_{\Omega}}\left(\dz u - \frac{1}{2}|\nabla_\bot u|^2 \right)^2 \approx \e{\int_{\Omega}}(\Delta_\bot u)^2.
\end{equation}
By the rotational symmetry in the $xy$-plane of the energies \eqref{3denergy}, the same calculation holds for the rotations $\Sigma_{\xi,\eta}$ of $\Sigma$ (see \eqref{rotvar}) obtained by replacing $\{\hat{x},\hat{y}\}$ with another orthonormal basis $\{\xi,\eta \}$ of $\mathbb{R}^2$.
Thus for a sequence $\varepsilon _{n}\rightarrow 0$ and $\left\{
u_{n}\right\} $ converging to a limiting function $u$ in a
suitable space, we may bound $\lim \inf \mathcal{ E}_{\varepsilon_n
}\left( u_n\right) $ from below by taking the supremum of the divergences of $\Sigma_{\xi,\eta}$ over all  $\{\xi,\eta\}$ as in \cite{AmbDeLMan99,AviGig99} for the Aviles-Giga problem. In fact, the energy \eqref{3denergy} contains the 2D Aviles-Giga energy as a special case, which we explain in \Cref{link}.} If $\nabla u\in( BV\cap L^\infty)(\Omega;\mathbb{R}^3)$, this lower bound is given by
\begin{equation}
\int_{J_{\nabla u}} \frac{|\nabla^+_\bot u - \nabla^-_\bot u|^4}{12|\nabla^+ u - \nabla^- u|}\,d\mathcal{H}^2,
\end{equation}
where $J_{\nabla u}$ is the jump set in the sense of $BV$. For the matching upper bound when $\nabla u \in (BV\cap L^\infty)(\Omega;\mathbb{R}^3)$, \eqref{3dsingular-var'} implies the lower bound is optimal for $ \mathcal{E}_{\varepsilon }\left( u\right)$ if \eqref{appeq} holds. When $\Omega$ is a cube, we show this is possible by using a 1D ansatz in which $\nabla u$ varies transverse to the defect set. The ansatz is chosen so that \eqref{appeq} holds. The upper bound for general $\Omega$ can then be shown by appealing to a result of Poliakovsky \cite{Pol08}.


Physically, the 1D ansatz can be interpreted as equating the compression strain $(\partial _{z}u-\frac{1}{2}\left\vert \nabla _{\bot }u\right\vert ^{2})^2$ with the bending strain $\e^2( \Delta _{\bot }u)^2$; see section \Cref{subsec:phys} for physical background of model \eqref{3denergy}. Thus our analysis shows that the frustration coming from the competition between the compression and bending terms is resolved by an equipartition of energy between the two. Moreover, unlike many other problems from materials science where microstructure develops \cite{Koh07}, microstructure does not appear in this smectics model. We remark that the same has been observed in the 2D problem \cite{NY20}. 

\vspace{0.1cm}The compactness of $%
\nabla u_{n}$ in $L^{2}$ is proved under the additional assumptions $%
\| \nabla u_{n}\|_{L^{p}(\Omega)}\leq C $ for some $p>6$ and $\|\nabla u_n \|_{L^2(\partial \Omega)}\leq C$. We emphasize that these are physically justifiable, as the model \eqref{2dsingular} is only valid in the small strain regime \cite{BreMar99,SanKam03}, cf. \Cref{subsec:phys}. Our compactness proof relies on a
compensated compactness argument based on the work of Tartar \cite{Tar79,
Tar83, Tar05} and Murat \cite{Mur78, Mur81ASNP}. The main challenge is to
find the suitable entropies to apply Tartar and Murat's div-curl lemma.
Assuming further that  $\Delta_{\bot}u_n \geq 0$ a.e. in $\Omega$, we show that  $\mathrm{curl}\, E_{n}\text{ and }%
\mathrm{div}\, B_{n}\text{ are compact in }H^{-1}\left( \Omega \right) , $
where 
\begin{equation*}
E_{n}=\left( \nabla _{\bot }u_{n},\frac{1}{2}\left\vert \nabla _{\bot
}u_{n}\right\vert ^{2}\right) \text{ and }\ B_{n}=\left( -\frac{\nabla
_{\bot }u_{n}}{2}\left\vert \nabla _{\bot }u_{n}\right\vert ^{2},\frac{1}{2}%
\left\vert \nabla _{\bot }u_{n}\right\vert ^{2}\right) .
\end{equation*}%
Thus $\left( E_{n},B_{n}\right) $ satisfy the assumptions of the div-curl lemma. Applying the div-curl lemma to $E_{n}\cdot B_{n}$ yields strong convergence of $%
\nabla _{\bot }u_{n}$ in $L^{2}$, and compactness of $\nabla u_{n}$ in $L^{2}$
follows from the fact $ \partial _{z}u_{n}-\frac{1}{2}\left\vert
\nabla _{\bot }u_{n}\right\vert ^{2}\rightarrow 0$ in $L^2$. 

The paper is organized as follows. \Cref{sec:prelim} recalls the physical background of our model \eqref{3denergy}, summarizes the pertinent mathematical literature, and presents the main calculation behind the lower bound in a simplified setting. Also included in \Cref{sec:prelim} are some preliminaries on functions of bounded variation. \Cref{sec:lbd} is devoted to the lower bound. In \Cref{sec:upbd} we construct a sequence which matches the lower
bound from \Cref{sec:lbd} when $\varepsilon \rightarrow 0$, and in \Cref{sec:cpt} we prove compactness. 

\section{Background and Preliminaries} 

\label{sec:prelim}

\subsection{\color{black}Physical background: Smectic A liquid crystals}
\label{subsec:phys}
\indent Smectic liquid crystals are formed by elongated molecules that are aligned and arranged in fluid-like layers. They are a remarkable example of a geometrically frustrated, multi-layer, soft-matter system. Ground states of smectic liquid crystals are characterized by flat, equally spaced, parallel layers. Due to spontaneously broken translational and rotational symmetry, singularities form in regions where the smectic order breaks down. When defects are present, the layers must bend and the resulting curvature is, in general, incompatible with equal spacing between them. The subtle interplay between the geometry of the layers and equal spacing imposes theoretical complications, and understanding the layer structure of a smectic liquid crystal is a challenging task.

Smectics can be represented by the density modulation $\Delta \rho
\varpropto $ $\cos \left[ \frac{2\pi }{a}\phi \left( \mathbf{x}\right)\right] $, where $\mathbf{x}=(x,y,z) \in \mathbb{R}^3$, $a$ is the layer spacing, and $\phi $ is the phase field of the order parameter \cite{CL95, deGP}. The peaks of the density wave where $\phi \left( \mathbf{x}\right) \in a\mathbb{Z}$  correspond to the smectic layers. We are interested in the smectic A phase, in which the nematic director coincides with the normal to the smectic layers $\mathbf{N=}\frac{\nabla \phi }{\left\vert \nabla \phi \right\vert }$. In terms of $\phi $, the free energy of a smectic liquid crystal on a region $\Omega$ \cite{SanKam05} is the sum of the compression and
bending energies 
\begin{equation}
F=\frac{1}{2}\int_{\Omega }B\left[ \left( 1-\left\vert \nabla \phi
\right\vert \right) ^{2}+K_{1}\left( \nabla \cdot \frac{\nabla \phi }{\left\vert \nabla \phi \right\vert }\right) ^{2}\right] d\mathbf{x},
\label{smectics-freeenergy}
\end{equation}
where $B$ is the compression modulus and $K_{1}$ the bend modulus. The constant $\e =\sqrt{\frac{K_{1}}{B}}$ is the penetration length. In the presence of boundaries, there is also the saddle-splay term 
\begin{equation*}
F_{K}=\widetilde{K}\int_{\Omega }\nabla \cdot \left[ \left( \nabla \cdot 
\mathbf{N}\right) \mathbf{N-}\left( \mathbf{N\cdot \nabla }\right) \mathbf{N}%
\right] d\mathbf{x}.
\end{equation*}%
The contribution of this term depends only on the boundary conditions and is
often excluded from the energy. For configurations with topological defects, this term can contribute to the core energy of a defect.

The global minimizer of $\left( \ref{smectics-freeenergy}\right) $ is the zero-energy state $\phi \left( \mathbf{x}\right) =\mathbf{n\cdot x+}\phi _{0}$ with $\mathbf{n }\in \mathbb{S}^{2}$ and $\phi_0 \in \mathbb{R}$. However, rarely do both terms in the free energy vanish. To understand the frustration of the problem in general, the following example of a smooth surface surface $\mathbf{x}_0=\mathbf{x}_0(u,v)$ and its parallel surfaces $\mathbf{x}_{n}(u,v):=\mathbf{x}_{0}(u,v)+na\mathbf{N}(u,v)$, $n \in \mathbb{Z}$ is illuminating \cite{KamSan06}. Let $H_n$ and $K_n$ denote the mean and Gaussian curvatures of $\mathbf{x}_n$. A standard calculation \cite[Sec. 3-5 Exercise 11]{doC76} yields the formulas
\begin{equation*}
H_{n}=\frac{H_{0}-naK_{0}}{1-2naH_{0}+n^{2}a^{2}K_{0}},\text{ }K_{n}=%
\frac{K_{0}}{1-2naH_{0}+n^{2}a^{2}K_{0}}.
\end{equation*}%
Since mean curvature can be expressed in terms of the surface normal by $H=-%
\frac{1}{2}\nabla \cdot \mathbf{N,}$ the bending term is proportional to $%
H^{2}.$ Therefore, for evenly spaced layers, the only way for the bending to
vanish (so $H_{n}=0$) for all layers is for the Gaussian curvature $K_{0}$
of $\mathbf{x}_{0}$ to be zero, which in turn implies $K_{n}=0$, so that the product of the principal curvatures is zero. On the other hand, $H_{n}=0$ implies that the principal curvatures sum to 0 as well, so that they both vanish everywhere. Thus unless all the layers are flat, vanishing curvature is incompatible with the uniform layer spacing. The interaction between the layer spacing, the Gaussian curvature, and the mean curvature presents a major obstacle to finding minimal configurations for the energy $\left( \ref{smectics-freeenergy}\right) $. Throughout the physics literature there are numerous works on the search for exact or approximate solutions of deformations
in smectics \cite{BluKam02, BreMar99, DiK02, DiK03, IL99,
KamLub99, KamSan06, MatKamSan12, San06, SanKam03, SanKam05, SanKam07}.

In the study of smectic layers, it is typical to consider the deviation $u$ from a fixed ground state $\overline{\phi}$, so $\phi =\overline{\phi }-u$. If we fix $\overline{\phi }(\mathbf{x})=z$, then $u(\mathbf{x})=z-\phi (\mathbf{x}).$ Expressing the
compression strain in powers of $\nabla u=\hat{z}-\nabla {\phi }$ and
setting $\nabla _{\bot }$ $=\partial _{x}\hat{x}+\partial _{y}\hat{y}$, $\Delta_\bot = \dxx+\dyy$, we
can expand the compression as 
\begin{equation}
1-\left\vert \nabla \phi \right\vert \approx \partial _{z}u-\frac{1}{2}%
\left\vert \nabla _{\bot }u\right\vert ^{2}+\mathcal{O}\left( u^{3}\right) ,
\label{compexp}
\end{equation}%
and bending strain as%
\begin{equation}
\nabla \cdot \frac{\nabla \phi }{\left\vert \nabla \phi \right\vert }\approx
-\Delta _{\bot }u+\mathcal{O}\left( u^{3}\right) .  \label{bendexp}
\end{equation}%
Keeping only the linear terms in the expansions $\left( \ref{compexp}\right)$ and $\left( \ref{bendexp}\right) $ in the limit of small elastic strains $\left\vert \nabla u\right\vert \ll 1$ results in a linear theory \cite{deGP,Kle83} of $\left( \ref{smectics-freeenergy}\right) $ which has been widely used in the study of strain fields and energetics of dislocations in smectics-A. On the other hand, the well-known example of the dilatative Helfrich-Hurault effect \cite{Hel71, Hur73}, in which the layers wrinkle upon stretching, indicates that nonlinear effects can be important even for small strains. Observe that the linear model is only valid when $\frac{\left\vert \nabla _{\bot }u\right\vert ^{2}}{\partial _{z}u}\ll 1$, and the nonlinear term is not negligible when $\partial _{z}u\sim\left\vert \nabla _{\bot }u\right\vert ^{2}\ $. The first example of strain field taking into account the nonlinear effect was constructed by Brener and Marchenko \cite{BreMar99}. They considered the strain field for a single edge dislocation in the regime $\partial _{z}u\sim \left( \partial _{x}u\right) ^{2}\ll 1$ and found an exact solution to the Euler-Lagrange equation for the 2D nonlinear approximation of $\left( \ref{smectics-freeenergy}\right) $
\begin{equation}
F=\frac{B}{2}\int_{\Omega }\left[ \left( \partial _{z}u-\frac{1}{2}\left(
\partial _{x}u\right) ^{2}\right) ^{2}+\e^{2}\left( \partial
_{x}^2u\right) ^{2}\right] dx\,dz,  \label{2dnonlinear}
\end{equation}%
where 
\begin{equation}\label{e definition}
\e = \sqrt{K_1/B}.
\end{equation}
Their construction deviates markedly from the strain field predicted by
linear theory even away from the defects where the elastic strain and curvature are small. The solution was verified experimentally in a cholesteric finger texture by Ishikawa and Lavrentovich \cite{IL99} and by Smalyukh and Lavrentovich \cite{SmaLav03} using confocal microscopy.

Brener and Marchenko found their solution by solving the fourth order Euler-Lagrange equation of $\left( \ref{2dnonlinear}\right) $ directly. Later, Santangelo and Kamien \cite{SanKam03} approached the problem from a different perspective and discovered a large class of exact minima for the nonlinear approximations of (\ref{smectics-freeenergy}). They studied the 3D version of \eqref{2dnonlinear}
\begin{equation}
F=\frac{B}{2}\int_{\Omega }\left[ \left( \partial _{z}u-\frac{1}{2}%
\left\vert \nabla _{\bot }u\right\vert ^{2}\right) ^{2}+{\e ^{2}}\left(
\Delta _{\bot }u\right) ^{2}\right] d\mathbf{x}.  \label{3dnonlinear}
\end{equation}%
 Completing the square in $\left( \ref{3dnonlinear}\right)$ yields%
\begin{eqnarray}
\mathcal{E}_{\varepsilon }\left( u\right) &=&\frac{1}{2}\int_{\Omega }\left[ 
\frac{1}{\varepsilon }\left( \partial _{z}u-\frac{1}{2}\left\vert \nabla
_{\bot }u\right\vert ^{2}\right) ^{2}+\varepsilon \left( \Delta _{\bot
}u\right) ^{2}\right]\, d\mathbf{x}  \notag \\
&=&\frac{1}{2}\int_{\Omega }\frac{1}{\varepsilon }\left( \partial _{z}u-
\frac{1}{2}\left\vert \nabla _{\bot }u\right\vert ^{2}{\oldmp}\varepsilon \Delta
_{\bot }u\right) ^{2}\,d\mathbf{x}\,{\pm}\int_{\Omega }\left( \partial _{z}u-\frac{1%
}{2}\left\vert \nabla _{\bot }u\right\vert ^{2}\right) \Delta _{\bot }u\,d\mathbf{x}.  \label{ebps}
\end{eqnarray}
Observe that
\begin{equation}
\partial _{z}u\Delta _{\bot }u=\nabla _{\bot }\cdot \left( \partial
_{z}u\nabla _{\bot }u\right) -\frac{1}{2}\partial _{z}\left( {\left\vert
\nabla _{\bot }u\right\vert ^{2}}\right)  \label{cros1}
\end{equation}
and
\begin{equation}
3\left\vert \nabla _{\bot }u\right\vert ^{2}\Delta _{\bot }u=-4\overline{K}%
u+\nabla _{\bot }\cdot \left( \nabla _{\bot }u\left\vert \nabla _{\bot
}u\right\vert ^{2}\right) +2\nabla _{\bot }\cdot \left( u\Delta _{\bot }u\nabla _{\bot
}u-\frac{1}{2}u\nabla _{\bot }\left\vert \nabla _{\bot
}u\right\vert ^{2}\right) ,  \label{cros2}
\end{equation}%
where $\overline{K}=\dxx u \dyy u - (\dxy u)^2$ is the
lowest order approximation of the Gaussian curvature. {\color{black}By the Bochner formula $\frac{1}{2}\Delta |\nabla u|^2=\nabla u\cdot \Delta (\nabla u)+|\nabla^2 u|^2$ from flat geometry or by direct calculation}, $\overline{K}$ can also be expressed as
\begin{equation}
\overline{K}
=\frac{1}{2}\nabla _{\bot }\cdot \left( \nabla _{\bot }u\Delta
_{\bot }u-\frac{1}{2}\nabla _{\bot }\left\vert \nabla _{\bot }u\right\vert
^{2}\right) \label{approxgau},
\end{equation}
which is the form found for example in \cite{SanKam03}. Substituting $\left( %
\ref{cros1}\right) $ and $\left( \ref{cros2}\right) $ back into $\left( \ref%
{ebps}\right) $ yields%
\begin{eqnarray}\label{bpsprelim}
\mathcal{E}_{\varepsilon }\left( u\right) &=&\frac{1}{2}\int_{\Omega }\frac{1%
}{\varepsilon }\left( \partial _{z}u-\frac{1}{2}\left\vert \nabla _{\bot
}u\right\vert ^{2}{\oldmp}\varepsilon \Delta _{\bot }u\right) ^{2}\,d\mathbf{x}\,{\pm\frac{%
2}{3}}\int_{\Omega }\overline{K}u\,d\mathbf{x}  \notag \\
&&{\pm}\int_{\Omega }\mathrm{div}\, \Xi(u) \,d\mathbf{x,}  \label{ebps3d}
\end{eqnarray}%
where 
\begin{equation}\label{calibps}
\Xi (u)=\left[ \left( \partial _{z}u-{\frac{1}{6}}\left\vert \nabla
_{\bot }u\right\vert ^{2}-{\frac{1}{3}}u\Delta _{\bot }u\right) \nabla _{\bot
}u+{\frac{1}{6}}u\nabla _{\bot }\left\vert \nabla _{\bot }u\right\vert ^{2},-%
\frac{1}{2}{\left\vert \nabla _{\bot }u\right\vert ^{2}}\right] .
\end{equation}

A direct conclusion from this decomposition is that the free energy of deformations with $\overline{K}=0$ is always bounded below by the contributions from the boundary integrals involving $\Xi(u)$ , and the minimum is achieved when
\begin{equation}
\partial _{z}u-\frac{1}{2}\left\vert \nabla _{\bot }u\right\vert
^{2}=\pm\e \Delta _{\bot }u.  \label{3dbps}
\end{equation}
The nonlinear differential equation \eqref{3dbps} is called the BPS equation, with solutions referred to as BPS solutions. This equation is of reduced order compared to the Euler-Lagrange equation of the free energy $\left( \ref{3dnonlinear}\right)$. As observed in \cite{SanKam03}, \eqref{3dbps} has a simple but important physical interpretation: equating the bending and compression energies so as to minimize their sum and alleviate the geometric frustration described earlier. This type of technique, called the BPS decomposition,  was introduced by Bogomol'nyi \cite{Bo76}, Prasad and Sommerfield \cite{PraSom76} in the study of field configurations of magnetic monopoles and solitions in field theory. BPS-type decompositions have also been utilized in the analysis of thermal fluctuations in 2D smectics \cite{GolWan94} and shape changes in vesicles \cite{BenSaxLoo01}.

When $u$ is a function of $z$ and $x$ only, so that $\overline{K}=0$, the BPS equation simplifies to
\begin{equation}
\partial _{z}u-\frac{1}{2}\left( \partial _{x}u\right) ^{2}=\pm\e \partial
_{x}^{2}u.  \label{bpsequation}
\end{equation}%
Through the Hopf-Cole transformation $S_{\pm }=\exp[\pm u(x,\pm z)/(2\e) ],$ $%
\left( \ref{bpsequation}\right) $ becomes the diffusion equation%
\begin{equation}
\partial _{z}S_{\pm }=\e \partial _{x}^{2}S_{\pm }.  \label{sequation}
\end{equation}
Solving $\left( \ref{sequation}\right) $ with the boundary conditions $S_{\pm }\rightarrow 1$ as $x\rightarrow -\infty $
and $S_{\pm }\rightarrow e^{\pm \frac{b}{4\e }}$ as $%
x\rightarrow +\infty ,$ where $b$ is the Burgers vector of the dislocation,
yields
\begin{equation*}
S_{\pm }=1+\left( e^{\pm\frac{b}{4\e }}-1\right) \pi ^{-\sfrac{1}{2}}\int_{-\infty
}^{\frac{x}{2\sqrt{\e z}}}e^{-t^{2}}dt.
\end{equation*}%
After inverting the Hopf-Cole transformation, this solution recovers the edge dislocation deformation calculated in \cite{BreMar99}. 

When $\overline{K}$ is small, the BPS solutions are energetically preferable compared to solutions from the linear theory \cite{SanKam03}. Santangelo and Kamien \cite{SanKam05} generalized this idea to the full energy %
\eqref{smectics-freeenergy} and established a specific set of minima of $%
\left( \ref{smectics-freeenergy}\right) $ when the $\overline{K}=0$. Their arguments demonstrated that the layer deformation in the
partially nonlinear theory \cite{BreMar99, SanKam03} is near the
profile from the full energy.  

Given the rigidity of the assumption that the Gaussian curvature of the smectic layers is zero, one might conjecture that BPS solutions are no longer (approximate) minimizers upon relaxing that assumption. Interestingly however, this is not the case. Indeed, we quote from \cite{SanKam03}, where some numerical simulations were done to investigate this issue. ``Further study is needed to understand the precise role of the $u\overline{K}$ in the failure of the BPS configurations to minimize the energy. It is often the case that `near-BPS' solutions are remarkably good approximants and it appears to be true here as well." We utilize techniques drawn from the mathematical literature for singular perturbation problems to obtain a sharp lower bound for \eqref{3dnonlinear} while only assuming that $\int\e\overline{K}$ is small, providing an explanation for this phenomenon. Our analysis is inspired by the simple fact that the 3D smectic energy \eqref{3dnonlinear} is a generalization of the well-studied 2D Aviles-Giga functional, which, nevertheless, has not been observed previously in the literature to the best of our knowledge.\par

\subsection{\color{black}Mathematical Background: The Aviles-Giga energy}\label{link}

To illustrate the link between 3D smectics and Aviles-Giga functional, fix $\Omega \subset \mathbb{R}^2$ and consider the smectic energy \eqref{3dnonlinear} on the three dimensional cylinder $\Omega \times (0,1)$ subject to the constraint $\dz u = \sfrac{1}{2}$. With the dependence of $\nabla u$ on $z$ eliminated, the energy \eqref{3dnonlinear} becomes
\begin{equation}\label{ourversion}
\frac{B}{2}\int_{\Omega} \left[ \frac{1}{4}\left( 1- \left\vert \nabla _{\bot }u\right\vert ^{2}\right) ^{2}+\e ^{2}\left(
\Delta _{\bot }u\right) ^{2}\right] \,dx \,dy.
\end{equation}
Ignoring the harmless factor of $\sfrac{1}{4}$ on the first term, this is the 2D instance of the Aviles-Giga energy, which we now recall.

Aviles and Giga \cite{AviGig87} formulated the energy
\begin{equation}\label{AGoriginal}
\mathcal{F}_{\varepsilon}=\int_{\Omega}\left[\frac{1}{\varepsilon}(|\nabla u|^2-1)^2+\varepsilon(\Delta u)^2\,\right] d\ex,
\end{equation}
known as the Aviles-Giga functional, as a model for smectic liquid crystals. Here $\Omega \subset \mathbb{R}^n$ is a bounded domain.  When $\varepsilon$ approaches zero,  Aviles and Giga conjectured that the optimal transition layers are one dimensional and $\mathcal{F}_{\varepsilon}$ converges (in the sense of $\Gamma$-convergence) to the limiting energy
\begin{equation*}
\mathcal{F}_0=1/3\int_{J_{\nabla u}}|[\nabla u]|^3 \,d\mathcal{H}^{n-1}.
\end{equation*}
Here the limiting function $u$ satisfies eikonal equation $|\nabla u|=1$ a.e., $J_{\nabla u}$ is the defect set, cf. \Cref{jumpdef}, and $[\nabla u]$ is the jump in  $\nabla u$ across $J_{\nabla u}$.\par
The Aviles-Giga functional has been extensively studied in the case $n=2$. After extracting a boundary term,  \eqref{AGoriginal} is equal to 
\begin{equation}\label{Aviles-Giga}
\tilde{\mathcal{F}}_{\varepsilon}=\int_{\Omega}\left[\frac{1}{\varepsilon}(|\nabla u|^2-1)^2+\varepsilon|\nabla^2 u|^2\right]\,dx\,dy.
\end{equation} 
Jin and Kohn \cite{JinKoh00} noticed that the divergence of the ``Jin-Kohn entropy"
\begin{equation}\label{jkentropy}
\left(\dx u\left(1-(\dy u)^2-\frac{1}{3}(\dx u)^2\right),-\dy u\left(1-(\dx u)^2-\frac{1}{3}(\dy u)^2\right)\right),    
\end{equation}
calculated directly as $(1-|\nabla u|^2)(\dxx u-\dyy u)$, bounds $\tilde{\mathcal{F}}_{\varepsilon}$  from below and the lower bound is asymptotically optimal if 
$$
|1-|\nabla u_{\varepsilon}|^2|\approx |\dxx u_{\varepsilon}-\dyy u_{\varepsilon}|\quad \text{and}\quad{\e\int_{\Omega}\left( \dxx u_{\e}\dyy u_{\e}- (\dxy u_{\varepsilon})^2\right)\,dx\,dy\approx 0.}
$$

For the unit square and boundary conditions $u=0$, $\frac{\partial u}{\partial n}=-1$,  they proved that the lower bound can be achieved by the ``1D" ansatz $u_{\varepsilon}=ax+f_{\varepsilon}(y)$ when the associated defect set of the limiting map $\lim_{\varepsilon \rightarrow 0}u_{\varepsilon}$ is parallel to the $x$ axis, corroborating Aviles-Giga's conjecture regarding the one-dimensionality of the transition region. Recently,  Ignat and Monteil \cite{IgnMon20}  proved that any minimizer of \eqref{Aviles-Giga} on an infinite strip is one-dimensional.  By considering the supremum of the divergences of all rotated versions of $\Sigma u$, Aviles and Giga \cite{AviGig99} derived a limiting functional $J: W^{1,3}(\Omega)\rightarrow [0, \infty)$ which is lower semicontinuous with respect to strong topology in $W^{1,3}(\Omega)$ and coincides with $\color{black}\mathcal{F}_0$ for any $u$ satisfying the eikonal equation with $\nabla u\in BV(\Omega)$.  Moreover, 
$$
J(u)\leq \liminf_{n\rightarrow \infty} \tilde{\mathcal{F}}_{\varepsilon_n}(u_n)
$$ 
for any sequence $u_n$ converging strongly to $u$ in $W^{1,3}(\Omega)$. A construction which achieves conjectured $\Gamma$-limit when $\nabla u \in BV(\Omega)$ was provided in \cite{ConDeL07, Pol08}. For the $\Gamma$-convergence theory, another important question is the compactness of sequences with bounded energy when $\varepsilon$ goes to zero.  Such compactness results in two dimensions have been proved by two different groups \cite{AmbDeLMan99, DKMO01} using different approaches.  For the Aviles-Giga functional in dimensions three or higher, the state of art is less clear.  De Lellis \cite{deL02} constructed a counterexample, showing  that $\mathcal{\color{black}F}_0$ is not the limiting energy  for $\mathcal{F}_{\varepsilon}$ and 1D ansatz is not optimal. The compactness and form of the limiting energy, however, are still open. The Aviles-Giga model and related topics such as the eikonal equation and other line-energy models have continued to be active areas of research in the past two decades; see \cite{AloRivSer02,DeLOtt03,GhiLam20, Ign12,Ign12a, IgnMer11,IgnMer12, LamLorPen20, DeLIgn15, Lor12,Lor14,  LorPen18,LorPen21,Mar21,RivSer01,RivSer03} and the references therein.\par

\subsection{Heuristic Proof of the Lower Bound}

The admissible class for \eqref{3denergy} is $H^2(\Omega)$.
We recall some trace properties for this space when $\partial \Omega$ is Lipschitz that are useful for the lower bound. 
First, since $u\in H^2$, it follows that $\left.\nabla u\right|_{\partial \Omega} \in H^{\sfrac{1}{2}}(\partial \Omega;\R^3)$. {\color{black}Furthermore, momentarily replacing $\dx$, $\dy$, and $\dz$ by $\partial_1$, $\partial_2$, and $\partial_3$, the tangential derivative operators
\begin{equation}\label{hminushalf}
\partial_{\tau_{jk}} = \nu_j \partial_k - \nu_k \partial_j : H^{\sfrac{1}{2}}(\partial \Omega) \to H^{-\sfrac{1}{2}}(\partial \Omega)=(H^{\sfrac{1}{2}}(\partial \Omega))^\ast
\end{equation}
are well-defined, linear, and bounded for any $1\leq j,k,l \leq 3$ \cite[Proposition 2.1]{MitMitWri11}.} In particular, denoting by $<,>$ the pairing between $H^{-\sfrac{1}{2}}(\partial \Omega)$ and $H^{\sfrac{1}{2}}(\partial \Omega)$, the quantities
\begin{equation}\label{pairing}
   <\partial_m u, \partial_{\tau_{jk}} (\partial_l u) >
\end{equation}
are well-defined for $u\in H^2(\Omega)$ and $1\leq j,k,l,m \leq 3$.

We recall the map
\begin{align*}
\left(\dz u \dx u -\frac{1}{2}\dx u (\dy u)^2-\frac{1}{6}(\dx u)^3,
- \dx u\dy u+\frac{1}{2}\dy u(\dx u)^2+
\frac{1}{6}(\dy u)^3,\frac{1}{2}(\dy u)^2-\frac{1}{2}(\dx u)^2\right)
\end{align*}
from \eqref{ourent}. We point out that the first two components of $\Sigma(\nabla u)$ are one half times the Jin-Kohn entropy for the Aviles-Giga energy when $\partial_z u=\frac{1}{2}$.  Direct calculation shows that {for $u\in H^2(\Omega)$,}
\begin{align}\notag
\dive \Sigma(\nabla u) &= \left(\dz u - \frac{1}{2}|\nabla_\bot u|^2 \right) (\dx^2 u - \dy^2 u ) \\ \label{dircalc}
&\leq \frac{1}{2\e}\left(\dz u - \frac{1}{2}|\nabla_\bot u|^2 \right) ^2 + \frac{\e}{2} (\dx^2 u - \dy^2 u )^2.
\end{align}
To handle the fact that the second term differs from $(\Delta_\bot u)^2$, we recall $\overline{K}= \dxx u \dyy u - (\dxy u)^2$, so that
\begin{align}\notag
\int_\Omega \left[(\Delta_\bot u)^2 - (\dx^2 u - \dy^2 u)^2 - 4\overline{K} \right] \, d\ex = \int_\Omega 4 (\dxy u)^2\,d\ex \geq 0.
\end{align}
Combining this with \eqref{dircalc}, we arrive at
\begin{align}\label{is}
\int_\Omega \dive \Sigma(\nabla u) \, d\ex + \e\int_\Omega 2\overline{K}\,d\ex \leq \mathcal{E}_\e(u)
.\end{align}
Notice that $\overline{K} = \dive_\bot (\dx u \dyy u, - \dx u \dxy u)$, so that 
\begin{align}\label{control}
\e\int_\Omega 2\overline{K}\, d\ex = 2\e \int_{\partial \Omega} \dx u \partial_{\tau_{12}} \dy u \,d\mathcal{H}^2
\end{align}
and both terms on the left hand side of \eqref{is} depend only on boundary values. 
By the continuity of the tangential derivative operators from $H^{\sfrac{1}{2}}(\partial \Omega)$ to $H^{\sfrac{-1}{2}}(\partial \Omega)$, we can estimate
\begin{align}\label{kcontrol}
\left|\e\int_\Omega 2\overline{K} \,d\ex\right|\leq 2\e \|\dx u \|_{H^{\sfrac{1}{2}}(\partial \Omega)}\|{\color{black}\partial_{\tau_{12}}}\dy u \|_{H^{\sfrac{-1}{2}}(\partial \Omega)}\leq {\color{black}C(\Omega)}\e \| \nabla_\bot u\|_{H^{\sfrac{1}{2}}(\partial\Omega)}^2.
\end{align}
The general lower bound as $\e\to 0$ is then derived by taking supremum over all the rotations of $\Sigma$ as in \eqref{rotvar}.
We also refer the reader to the end of \Cref{sec:upbd} for a discussion of the implications of this analysis on the BPS solutions.\par
{\color{black}\begin{remark}
There are few 3D examples where an explicit calibration can be found and, to our knowledge, there is no systematic approach to find such a calibration for an arbitrary energy. Our choice of the calibration \eqref{ourent} uses only  $\nabla u$ while the calibration \eqref{calibps}  from BPS decomposition involves second derivatives of $u$.   For $u$ smooth, since 
\begin{eqnarray*}
&&(\Delta_{\bot}u)^2-(\dxx u-\dyy u)^2=4\overline{K},\\
&& (\dx u-\frac{1}{2}|\nabla_{\bot} u|^2)\cdot \Delta_{\bot}u=\dive \Xi(u)+\frac{2}{3}\overline{K}u,\\
&&(\dx u-\frac{1}{2}|\nabla_{\bot} u|^2)\cdot (\dxx u-\dyy u)=\dive\Sigma(\nabla u),
\end{eqnarray*}
the two calibrations can be linked by the following identity when $\overline{K}=0$:
\begin{equation*}
|\dive \Xi|=|\dive \Sigma(\nabla u)|.
\end{equation*}

\end{remark}}
\begin{remark}
Comparing the BPS decomposition to \eqref{is}, we see that the remainder term $\e\int_\Omega 2\overline{K}$ in the latter can be handled more easily than the term $\int_\Omega u \overline{K}$. This is the reason that we are able to obtain a lower bound even in the presence of non-vanishing Gaussian curvature. 
One might guess that since the BPS decomposition is also predicated on equipartition of energy between the bending and compression terms, the two arguments give the same lower bound 
\begin{equation}\notag
\int_{J_{\nabla u}} \frac{|\nabla^+_\bot u - \nabla^-_\bot u|^4}{12|\nabla^+ u - \nabla^- u|}\,d\mathcal{H}^2
\end{equation}
in the limit $\e \to 0$ when $\overline{K}=0$ for each $u_\e$ and $\nabla u \in BV(\Omega;\mathbb{R}^3)$. This is indeed the case, although passing to the limit as $u_\e \to u$ in the term $\int_\Omega \dive \Xi(u_\e)$ is non-trivial since $\Xi (u_\e)$ contains second order derivatives of $u_\e$. This can be accomplished via a blowup argument which then allows for a careful analysis of those higher order terms in the simplified setting of a flat jump set with limiting constant states $\nabla u^\pm$ on either side. 
\end{remark}

}

\par

\subsection{Properties of Functions of Bounded Variation} Our discussion draws from the relevant sections of \cite[Chapter 3]{AmbFusPal00}. For the sake of generality and because the dimensions of the ambient/target spaces do not matter for these results, in this subsection we will consider functions defined on $\Omega\subset \mathbb{R}^N$ and taking values in $\mathbb{R}^M$.  
\begin{definition}{\cite[Def. 3.1]{AmbFusPal00}}
An element $m \subset L^1(\Omega; \mathbb{R}^M)$ belongs to the space $BV(\Omega;\R^M)$ if the distributional derivative $Dm=\left(D_j m_i \right)$ of $m$ is a finite $\R^{M\times N}$-valued Radon measure.
\end{definition}
\begin{definition}{\cite[Def. 3.63]{AmbFusPal00}}
If $m \in L^1(\Omega;\R^M)$ we say that $m$ has approximate limit $z=\ap \lim_{y \to x} m(y)$ at $x \in \Omega$ if
\begin{equation}\notag
\lim_{r \to 0} \fint_{B_r(x)} | m(y) - z |\,dy =0.
\end{equation}
If this property fails to hold at $x$ for every $z\in \mathbb{R}^M$, then $x$ belongs to $S_m$, the approximate discontinuity set.
\end{definition}
To refer to solid half-balls in $\Omega$, we define
\begin{equation}\notag
    B_r^+(x,\nu) := \left\{y \in B_r(x) : (y-x)\cdot \nu > 0 \right\},B_r^-(x,\nu) := \left\{y \in B_r(x) : (y-x)\cdot \nu < 0 \right\}.
\end{equation}
\begin{definition}{\cite[Def. 3.67]{AmbFusPal00}}\label{jumpdef}
For $m\in BV(\Omega;\R^M)$, we say that $x\in \Omega$ belongs to $J_m$, the set of approximate jump points, if there exist $m^+(x)\neq m^-(x)\in \R^M$ and $\nu_m(x) \in \mathbb{S}^{N-1}$ such that
\begin{equation*}
    \lim_{r \to 0} \fint_{B_r^+(x,\nu_m)} |m(y) - m^+(x) |\, dy=0\textit{ and } \lim_{r \to 0} \fint_{B_r^-(x,\nu_m)} |m(y) - m^-(x) |\, dy=0.
\end{equation*}
The vectors $m^+, m^-,$ and $\nu_m$ are uniquely determined up to permuting $m^+, m^-$ and exchanging $\nu_m(x)$ for $-\nu_m(x)$. Also, $J_m$ is countably $\mathcal{H}^{N-1}$-rectifiable. 
\end{definition}
\begin{definition}{\cite[Cor. 3.80]{AmbFusPal00}}
The precise representative  of $m\in BV(\Omega;\R^M)$ is the function
\begin{equation}\notag
    \tilde{m}(x) :=  \begin{cases} 
      \ap \lim_{y \to x}m(y) & \textit{ if }x\notin S_m, \\
      \displaystyle\frac{m^+(x) + m^-(x)}{2} & \textit{ if }x\in J_m .
   \end{cases}
\end{equation}
\end{definition}
\begin{theorem}{\cite[Thm. 3.78]{AmbFusPal00}}\label{fedvolp}
If $m\in BV(\Omega;\R^M)$, then $S_m$ is countably $\hn$-rectifiable and $\hn(S_m \setminus J_m)=0$.
\end{theorem}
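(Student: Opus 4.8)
This is the Federer--Vol'pert theorem. The plan is to combine the coarea formula for $BV$ functions with the structure theory of sets of finite perimeter (see \cite[Ch.~3]{AmbFusPal00}) to obtain the $\hn$-rectifiability and $\sigma$-finiteness of $S_m$, and then to run a one-dimensional slicing argument for the identity $\hn(S_m\setminus J_m)=0$. First I would reduce to the scalar case: since $S_m=\bigcup_{i=1}^M S_{m_i}$ and a countable union of $\hn$-rectifiable sets is $\hn$-rectifiable, it suffices to show that $S_u$ is $\hn$-rectifiable and $\hn\mres S_u$ is $\sigma$-finite for a scalar $u\in BV(\Omega)$. For $t\in\R$ set $E_t:=\{x\in\Omega:u(x)>t\}$; by the coarea formula $E_t$ has finite perimeter in every $\Omega'\ssubset\Omega$ for $\mathcal{L}^1$-a.e.\ $t$, and I fix a countable dense set $D\subset\R$ of such $t$. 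The key point is the inclusion
\[
S_u \ \subseteq\ \bigcup_{t\in D}\partial^e E_t \quad\text{up to an }\hn\text{-null set},
\]
where $\partial^e E_t$ denotes the essential boundary of $E_t$, i.e.\ the set of points of density neither $0$ nor $1$. Indeed, if $x$ has Lebesgue density $0$ or $1$ in $E_t$ for every $t\in D$, then using that $t\mapsto\mathbf{1}_{E_t}$ is nonincreasing together with Fubini's theorem and dominated convergence ($u\in L^1$), one verifies that $u$ has approximate limit $z:=\sup\{t\in D: x\text{ has density }1\text{ in }E_t\}$ at $x$, so $x\notin S_u$; the degenerate cases $z=\pm\infty$ occur only on an $\hn$-null set and may be discarded. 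By Federer's structure theorem for sets of finite perimeter, for each $t\in D$ the set $\partial^e E_t$ is $\hn$-rectifiable, agrees up to an $\hn$-null set with the reduced boundary $\partial^* E_t$, and satisfies $\hn(\partial^e E_t\cap\Omega')=P(E_t;\Omega')<\infty$ for $\Omega'\ssubset\Omega$. Hence $S_u$ is $\hn$-rectifiable and $\hn\mres S_u$ is $\sigma$-finite.

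For the remaining claim $\hn(S_m\setminus J_m)=0$ (the inclusion $J_m\subseteq S_m$ being immediate), I would slice $m$ along lines. For $\nu\in\mathbb{S}^{N-1}$ and $y\in\nu^\perp$ write $m^\nu_y(t):=m(y+t\nu)$. By the one-dimensional restriction theory for $BV$ functions, for every $\nu$ and $\hn$-a.e.\ $y$ the section $m^\nu_y$ is an $\R^M$-valued $BV$ function of one variable; being so, it is locally bounded and its approximate discontinuity set coincides with its jump set $J_{m^\nu_y}$. Using the rectifiability of $S_m$ from the previous step, one obtains that for $\mathcal{H}^{N-1}$-a.e.\ $\nu$ and $\hn$-a.e.\ $y$ the line $y+\R\nu$ meets $S_m$ only at points where $S_m$ has a tangent plane transverse to $\nu$, and such points are discontinuity---hence jump---points of $m^\nu_y$, while $J_{m^\nu_y}$ consists exactly of the transverse points of $(J_m)^\nu_y$; consequently $(S_m\setminus J_m)^\nu_y$ is $\mathcal{H}^0$-null for a.e.\ $\nu$ and $\hn$-a.e.\ $y$. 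Since $S_m\setminus J_m$ is $\hn$-rectifiable (being a subset of $S_m$) and $\sigma$-finite, the integral-geometric inequality
\[
\int_{\mathbb{S}^{N-1}}\int_{\nu^\perp}\mathcal{H}^0\bigl((S_m\setminus J_m)^\nu_y\bigr)\,d\hn(y)\,d\mathcal{H}^{N-1}(\nu)\ \geq\ c_N\,\hn(S_m\setminus J_m),
\]
valid because $\hn$-a.e.\ approximate tangent plane of a rectifiable set is met transversally by a positive-measure set of directions, then forces $\hn(S_m\setminus J_m)=0$.

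The main obstacle is the jump identity: rigorously relating the approximate discontinuity and jump structure of $m$ to that of its one-dimensional sections---this is the substance of the one-dimensional restriction theorem for $BV$---together with the integral-geometric inequality that upgrades a.e.-empty slices to $\hn$-nullity for rectifiable sets. An alternative for this step is a blow-up at $\hn$-a.e.\ point of $S_m$: since the upper $(N-1)$-dimensional density of the finite measure $|Dm|$ is finite $\hn$-a.e., the rescalings of $m$ are precompact in $L^1_{\mathrm{loc}}$, but identifying the blow-up limit as a genuine jump function is itself delicate. By contrast, once the coarea formula and Federer's structure theorem are in hand, the reduction to $M=1$, the inclusion $S_u\subseteq\bigcup_{t\in D}\partial^e E_t$, and the $\sigma$-finiteness of $\hn\mres S_u$ are routine.
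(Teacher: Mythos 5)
The paper does not prove this result; it is cited as \cite[Thm.~3.78]{AmbFusPal00}, so I compare against the proof in that reference. Your first half is correct and matches it: the reduction to the scalar case, the coarea formula producing $\mathcal{L}^1$-a.e.\ level sets of finite perimeter, the inclusion $S_u\subseteq\bigcup_{t\in D}\partial^e E_t$ modulo an $\hn$-negligible set (the degenerate cases being handled by the $\hn$-a.e.\ finiteness of $u^\wedge,u^\vee$, \cite[Thm.~3.77]{AmbFusPal00}), and rectifiability plus $\sigma$-finiteness from Federer's structure theorem. That part is fine.

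The slicing argument for $\hn(S_m\setminus J_m)=0$, however, departs from the reference and as written is circular. Your two crucial claims—that a transverse crossing of $S_m$ is automatically a discontinuity point of the section $m^\nu_y$, and that $J_{m^\nu_y}$ consists exactly of the transverse crossings of $J_m$—are precisely the content of the one-dimensional restriction theorem \cite[Thm.~3.108]{AmbFusPal00}, which in that book is proved \emph{after} and \emph{using} Federer–Vol'pert. Without an independent argument the step begs the question: a priori an approximate discontinuity that is not a jump could be ``tangential'' in a way that a.e.\ one-dimensional slice fails to detect, and ruling this out is exactly the assertion being proved. Note also that if $x\in S_m\setminus J_m$, it is not even clear that a transverse slice has equal one-sided limits at $x$, since no one-sided traces of $m$ are defined there; so your implication in the forward direction is doing all the work and is unproved. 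The reference instead blows up at $\hn$-a.e.\ $x\in S_u$: fixing rationals $s<t$ with $u^\wedge(x)<s<t<u^\vee(x)$, one shows that $\hn$-a.e.\ such $x$ lies in $\partial^*E_s\cap\partial^*E_t$, and the nesting $E_t\subseteq E_s$ together with De Giorgi's structure theorem forces $\nu_{E_s}(x)=\nu_{E_t}(x)$; hence the blow-up of $u$ at $x$ is the two-valued step function $u^\vee(x)\chi_{\{y\cdot\nu>0\}}+u^\wedge(x)\chi_{\{y\cdot\nu<0\}}$, i.e.\ $x\in J_u$. This is the ``alternative'' blow-up you mention and set aside as delicate; in fact it is the main proof, and the identification of the blow-up limit is made tractable precisely by De Giorgi's half-space blow-up for the reduced boundaries, not by a separate slicing argument. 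If you want to keep a slicing-based route you would need to supply a direct proof of your claim~(a) that does not invoke the one-dimensional restriction theorem.
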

Next, we recall the BV Structure Theorem.
\begin{theorem}{\cite[Section 3.9]{AmbFusPal00}}
For $m\in BV(\Omega;\R^M)$, the Radon measure $Dm$ can be decomposed into three mutually singular measures
\begin{equation}\label{threesing}
    Dm = D^am + D^j m + D^c m.
\end{equation}
The first component is absolutely continuous with respect to the Lebesgue measure and is given by 
\begin{equation*}
D^a m = \nabla m \mathcal{L}^N,
\end{equation*}
where $\nabla m$ is the matrix of approximate partial derivatives defined $\mathcal{L}^n$-a.e. The component of $Dm$ that is singular with respect to $\mathcal{L}^N$ is $D^sm$. It can be written as $D^s m = D^jm + D^cm$, where
\begin{equation}\label{singpart}
    D^j m = (m^+ - m^-) \otimes \nu_m \hn \mres J_m
\end{equation}
and $D^c m$ is the Cantor part of $Dm$, which vanishes on sets that are $\mathcal{H}^{N-1}$ $\sigma$-finite.
\end{theorem}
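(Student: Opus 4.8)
The plan is to build the decomposition \eqref{threesing} by splitting $Dm$ successively according to the measure-theoretic structure of $m$. First I would apply the Radon--Nikodym theorem to write $Dm = D^a m + D^s m$ with $D^a m \ll \mathcal{L}^N$ and $D^s m \perp \mathcal{L}^N$, and then identify $D^a m = \nabla m\,\mathcal{L}^N$ with $\nabla m$ the matrix of approximate partial derivatives. For this identification I would invoke the Lebesgue--Besicovitch differentiation theorem together with the standard fact, proved via a difference-quotient/blow-up argument, that a $BV$ function is approximately differentiable $\mathcal{L}^N$-a.e. with approximate differential equal to the Radon--Nikodym density of $Dm$.

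Next I would analyze $D^s m$ using as the main preliminary the absolute continuity lemma: $|Dm|$ vanishes on every Borel set $E$ with $\mathcal{H}^{N-1}(E)=0$, which follows from a covering estimate controlling the total variation of $m$ on small balls, after mollification, by surface integrals of $|\nabla m_\varepsilon|$. Granting this, I would set $D^j m := Dm \mres J_m$ and $D^c m := D^s m - D^j m = D^s m \mres(\Omega\setminus S_m)$, where the last identity uses \Cref{fedvolp} to get $\mathcal{H}^{N-1}(S_m\setminus J_m)=0$ and hence $|Dm|(S_m\setminus J_m)=0$. The hardest step is to compute $D^j m$ and prove
\begin{equation*}
D^j m = (m^+ - m^-)\otimes \nu_m\,\mathcal{H}^{N-1}\mres J_m.
\end{equation*}
Here I would first show that $\mathcal{H}^{N-1}\mres J_m$ and $|Dm|\mres J_m$ are mutually absolutely continuous (the absolute continuity lemma in one direction, a lower density estimate for $|Dm|$ along the rectifiable set $J_m$ coming from the jump in the one-sided approximate limits in the other), and then blow up at $\mathcal{H}^{N-1}$-a.e. point of $J_m$: after rescaling, $m$ converges in $L^1_{\mathrm{loc}}$ to the two-valued function equal to $m^\pm$ on the half-spaces cut out by $\nu_m$, whose distributional gradient is exactly $(m^+-m^-)\otimes\nu_m$ times surface measure on the hyperplane, and lower semicontinuity of total variation under the blow-up pins down the density.

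Finally I would record the qualitative properties and mutual singularity. By Radon--Nikodym $D^a m\perp D^s m$; the measure $D^j m$ is carried by $J_m$, which is $\mathcal{L}^N$-negligible and $\sigma$-finite with respect to $\mathcal{H}^{N-1}$. It then remains to check that $D^c m$ vanishes on every $\mathcal{H}^{N-1}$ $\sigma$-finite set: reducing to a single Borel set $E$ with $\mathcal{H}^{N-1}(E)<\infty$ and splitting $E = (E\cap S_m)\cup(E\setminus S_m)$, the first piece contributes nothing since $D^c m\mres J_m=0$ by definition and $|Dm|(S_m\setminus J_m)=0$, while on $E\setminus S_m$ every point (up to an $\mathcal{H}^{N-1}$-null set) is a point of approximate continuity, where the $\mathcal{H}^{N-1}$-density of $D^s m$ must vanish, so $|D^c m|(E\setminus S_m)=0$. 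Hence $D^c m$ is singular to both $\mathcal{L}^N$ and the jump measure, giving the claimed mutual singularity of the three parts. The main obstacle throughout is the identification of $D^j m$, i.e. the blow-up analysis on the rectifiable set $J_m$ together with the comparison of $|Dm|$ and $\mathcal{H}^{N-1}$ near $J_m$.
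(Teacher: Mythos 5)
The paper does not prove this result: it is quoted as a cited fact from \cite[Section 3.9]{AmbFusPal00} (the BV Structure Theorem), so there is no in-paper proof to compare against. Your sketch follows the standard Ambrosio--Fusco--Pallara route and is correct in outline: Radon--Nikodym gives $Dm=D^am+D^sm$; $\mathcal{L}^N$-a.e.\ approximate differentiability of $BV$ maps identifies $D^a m=\nabla m\,\mathcal{L}^N$; the absolute-continuity lemma ($|Dm|(E)=0$ whenever $\mathcal{H}^{N-1}(E)=0$) together with \Cref{fedvolp} lets you discard $S_m\setminus J_m$ and write $D^cm=D^sm\mres(\Omega\setminus S_m)$; a blow-up at $\mathcal{H}^{N-1}$-a.e.\ point of the rectifiable set $J_m$ computes the jump density; and a density/covering argument shows $|D^cm|$ vanishes on $\mathcal{H}^{N-1}$ $\sigma$-finite sets.

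One place to tighten: saying that ``lower semicontinuity of total variation under the blow-up pins down the density'' of $D^jm$ overstates what lower semicontinuity alone gives. Weak-$\ast$ convergence of the rescaled gradients plus lower semicontinuity only produces the one-sided bound $|Dm|\mres J_m\geq |m^+-m^-|\,\mathcal{H}^{N-1}\mres J_m$. The reverse inequality is the nontrivial half; in \cite{AmbFusPal00} it is obtained by Besicovitch differentiation of the vector measure $Dm$ against the positive measure $\mathcal{H}^{N-1}\mres J_m$, using the finite upper $(N-1)$-density estimate for $|Dm|$ at $\mathcal{H}^{N-1}$-a.e.\ point of $J_m$ (so that the two measures are mutually absolutely continuous on $J_m$ in the quantitative sense needed). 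With that amendment your argument is the standard one and sound.
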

\begin{remark}\label{applimrem} Since $D^a m$ and $D^c m$ both vanish on sets that are $\mathcal{H}^{N-1}$ $\sigma$-finite, they vanish on $S_m$. Therefore, $\ap \lim_{y \to x}m(y)$, which is defined off of $S_m$, exists and is equal to $\tilde{m}$ except on a set of $|D^am|$- and $|D^cm|$-measure zero.
\end{remark}
\begin{lemma}
If $m\in BV(\Omega;\mathbb{R}^N)$ is equal to $\nabla u$ for some $u\in W^{1,1}(\Omega)$, then
\begin{equation}\label{jumpcond}
    \nabla u^+ - \nabla u^- \parallelsum \nu_{\nabla u}.
\end{equation}
\end{lemma}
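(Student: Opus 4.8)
The plan is to exploit that $m=\nabla u$ is curl-free, which forces the matrix-valued measure $Dm$ to be symmetric, and then to read off \eqref{jumpcond} from the explicit form \eqref{singpart} of the jump part.

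First, since $u\in W^{1,1}(\Omega)$ and $m=\nabla u\in BV(\Omega;\R^N)$, for every pair of indices $i,j$ we have, in the sense of distributions,
\[
D_i m_j = D_i \partial_j u = \partial_i\partial_j u = \partial_j\partial_i u = D_j m_i,
\]
so $Dm$ and its transpose $(Dm)^T$ coincide as $\R^{N\times N}$-valued Radon measures; that is, $Dm$ is a symmetric measure.

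Second, I would argue that each of the three mutually singular components in the decomposition \eqref{threesing} is itself symmetric. Transposition is a linear isometry of $\R^{N\times N}$ that preserves absolute continuity with respect to $\mathcal{L}^N$, concentration on a fixed $\hn$-rectifiable set, and the property of vanishing on sets that are $\hn$ $\sigma$-finite; hence $D^a m=(D^a m)^T+\dots$, more precisely the triple $\big((D^a m)^T,(D^j m)^T,(D^c m)^T\big)$ is again a decomposition of $(Dm)^T=Dm$ of the type \eqref{threesing}. By the uniqueness of that decomposition, $D^a m=(D^a m)^T$, $D^j m=(D^j m)^T$, and $D^c m=(D^c m)^T$. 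Using \eqref{singpart}, symmetry of $D^j m$ gives
\[
(m^+-m^-)\otimes \nu_m = \nu_m \otimes (m^+-m^-) \qquad \hn\text{-a.e. on } J_m.
\]

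Finally, I would invoke the elementary fact that for nonzero vectors $a,b$, the rank-one matrix $a\otimes b$ equals its transpose $b\otimes a$ if and only if $a\parallelsum b$: evaluating $a\otimes b=b\otimes a$ on the vector $b$ yields $a\,|b|^2=(a\cdot b)\,b$, so $a$ is a scalar multiple of $b$. On $J_m$ we have $m^+\neq m^-$ and $\nu_m\in\mathbb{S}^{N-1}$, both nonzero, hence $\nabla u^+-\nabla u^-\parallelsum \nu_{\nabla u}$ $\hn$-a.e.\ on $J_{\nabla u}$, which is \eqref{jumpcond}. The only point requiring care is the measure-theoretic bookkeeping in the second step — that symmetry of the total measure descends to each piece — but this is immediate from the uniqueness of \eqref{threesing} once one notes that transposition preserves the three structural properties that characterize the pieces. (Alternatively, one could blow up $m$ at an $\hn$-a.e.\ point of $J_{\nabla u}$, observe that the curl-free condition passes to the blow-up limit, which is a two-valued BV field across a hyperplane, and conclude by the same linear algebra; the decomposition argument above is shorter.)
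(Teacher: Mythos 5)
Your proof is correct and follows essentially the same route as the paper's: pass from symmetry of $Dm=\nabla^2 u$ to symmetry of its jump part via the uniqueness of the decomposition \eqref{threesing}, then read off \eqref{jumpcond} from the rank-one linear algebra. The only difference is that you spell out why symmetry descends to each mutually singular piece, which the paper leaves implicit.
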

\begin{proof}
Since the Radon measure $Dm$ is equal to $\nabla^2 u$, it is symmetric and can be decomposed into three mutually singular measures, cf. \eqref{threesing}. Thus the jump part $(\nabla u^+ - \nabla u^-)\otimes \nu_{\nabla u}\mathcal{H}^2\mres J_{\nabla u} $ is symmetric as well. But $(\nabla u^+ - \nabla u^-)\otimes \nu_{\nabla u}$ is symmetric if and only if $\nabla u^+ - \nabla u^-\parallelsum  \nu_{\nabla u} $, which is \eqref{jumpcond}.
\end{proof}
We state the BV Chain Rule \cite{Vol67,AmbDal90}.
\begin{theorem}\label{bvchain}{\cite[Thm 3.96]{AmbFusPal00}}
Let $m \in BV(\Omega;\R^M)$ and $F:\R^M \to \R^P$ be $C^1$ with bounded gradient and $F(0)=0$ if $\mathcal{L}^N(\Omega)=\infty$. Then $F\circ m\in BV(\Omega;\R^P)$ and
\begin{equation}\label{chainrule}
    D(F \circ m) = \nabla F(m) \nabla m \mathcal{L}^N + \nabla F(\tilde{m}) D^c m + \left(F(m^+) - F(m^-) \right) \otimes \nu_m \hn \mres J_m.
\end{equation}
\end{theorem}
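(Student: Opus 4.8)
The plan is to first show that $F\circ m\in BV(\Omega;\R^P)$ by an approximation argument, and then to identify the absolutely continuous, jump, and Cantor parts of $D(F\circ m)$ separately by matching each against the corresponding piece of $Dm$ in the decomposition \eqref{threesing}. Since $\nabla F$ is bounded, $F$ is Lipschitz with constant $\|\nabla F\|_{\infty}$; choosing mollifications $m_k\to m$ strictly in $BV$, one has $|\nabla(F\circ m_k)|\leq\|\nabla F\|_{\infty}|\nabla m_k|$ and $F\circ m_k\to F\circ m$ in $L^1$, so by lower semicontinuity of the total variation $F\circ m\in BV$ and $|D(F\circ m)|\leq\|\nabla F\|_{\infty}|Dm|$. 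In particular $D(F\circ m)\ll|Dm|$, and since the sets carrying $D^am$, $D^jm$, and $D^cm$ are mutually singular, this alone forces $D(F\circ m)$ to split along those same three sets; it remains only to compute the density of $D(F\circ m)$ with respect to $|Dm|$ on each.

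On the absolutely continuous part, at $\mathcal{L}^N$-a.e.\ point $m$ is approximately differentiable, and the classical chain rule for approximate gradients yields $\nabla(F\circ m)=\nabla F(m)\nabla m$, hence $D^a(F\circ m)=\nabla F(m)\nabla m\,\mathcal{L}^N$. For the jump part, note $S_{F\circ m}\subset S_m$ because $F$ is continuous (an approximate limit $z$ of $m$ at $x$ produces the approximate limit $F(z)$ of $F\circ m$), and by \Cref{fedvolp} applied to $m$ and to $F\circ m$ these discontinuity sets agree with the respective jump sets up to $\hn$-null sets; at $\hn$-a.e.\ $x\in J_m$ the rescalings $m(x+r\,\cdot\,)$ converge in $L^1_{loc}$ to the two-valued map equal to $m^\pm(x)$ on $\{\pm(y\cdot\nu_m(x))>0\}$, and composing with the Lipschitz map $F$ shows $F\circ m$ has jump vector $F(m^+(x))-F(m^-(x))$ and the same normal $\nu_m(x)$. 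Therefore $D^j(F\circ m)=\big(F(m^+)-F(m^-)\big)\otimes\nu_m\,\hn\mres J_m$.

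The main obstacle is the Cantor part, where one must prove $D^c(F\circ m)=\nabla F(\tilde m)\,D^cm$. First, since $D^cm$ and $D^c(F\circ m)$ vanish on $\hn$ $\sigma$-finite sets, both are concentrated on $\Omega\setminus S_m$, where $\tilde m=\ap \lim m$ exists, so $\nabla F(\tilde m)$ is $|D^cm|$-measurable and the right-hand side is meaningful. The natural route is a one-dimensional reduction: fix a direction $e$, and for $\mathcal{L}^{N-1}$-a.e.\ line parallel to $e$ the restriction $m_t$ of $m$ is a $BV$ function of one variable whose derivative is the slice of $D_e m$; on the continuous (atomless) portion of $Dm_t$ one has the one-dimensional identity $D^c(F\circ m_t)=\nabla F(m_t)\,D^cm_t$, proved by approximating $F$ by its affine linearization and exploiting that $m_t$ is continuous there, via a Riemann--Stieltjes estimate. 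Integrating these slice identities over all lines and over a basis of directions, and invoking that slicing is compatible with the decomposition \eqref{threesing} (the a.c., jump, and Cantor parts of $D_e m$ slice to the corresponding parts of $Dm_t$), produces the asserted formula for $D^c(F\circ m)$; summing the three contributions gives \eqref{chainrule}. The delicate points are the justification that $F$ may be replaced by its linearization at $\tilde m(x)$ on the Cantor set—legitimate precisely because $F\in C^1$ and $m$ has no jumps there, so the error is of order $o(|D^cm|)$—and the verification that restriction to lines commutes with the extraction of the absolutely continuous, jump, and Cantor parts.
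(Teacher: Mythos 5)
The paper states this theorem as a cited result (Vol'pert's $BV$ chain rule, \cite[Thm.\ 3.96]{AmbFusPal00}) and does not prove it, so there is no in-paper argument to compare against. Your sketch — the Lipschitz estimate via mollification giving $|D(F\circ m)|\leq\|\nabla F\|_\infty |Dm|$, the resulting splitting of $D(F\circ m)$ along the mutually singular parts of $Dm$ (identified with $D^a$, $D^j$, $D^c$ of $F\circ m$ via absolute continuity with respect to $\mathcal{L}^N$ and concentration on/vanishing on $\mathcal{H}^{N-1}$-$\sigma$-finite sets), the blowup at $\mathcal{H}^{N-1}$-a.e.\ jump point for the jump density, and the one-dimensional slicing argument for the Cantor part — is a correct outline of the standard proof in the cited reference.
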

When $M=P$, taking the trace on both sides of \eqref{chainrule} yields
\begin{equation}\label{chaindiv}
    \dive (F\circ m) = \tr (\nabla F(m) \nabla m) \mathcal{L}^2 + \tr(\nabla F(\tilde{m}) D^c m) + (F(m^+) - F(m^-)) \cdot \nu_m \hn \mres J_m. 
\end{equation}

\begin{remark}\label{bv bound chain}\color{black}
As a consequence of \Cref{bvchain}, if $F$ does not have bounded gradient, one must assume that  $m$ instead is bounded in order to apply the chain rule above.
\end{remark}

\section{The 3D Aviles-Giga Space and the Lower Bound} 

\label {sec:lbd}
In this section we prove the lower bound. Due to the connection between the 3D model \eqref{3dnonlinear} and 2D Aviles-Giga, many of the arguments leading to the lower bound are the natural 3D analogues of results from \cite{AmbDeLMan99,AviGig99}. The proof is centered around the 3D version of the 2D Aviles-Giga space considered in \cite{AviGig99} and explicitly defined in \cite{AmbDeLMan99}. Although alternate proofs are available when $\nabla u \in BV(\Omega;\mathbb{R}^3)$, for example via blowup or covering arguments, we follow the structure of \cite{AmbDeLMan99} which gives the most general version of the lower bound without this assumption on $\nabla u$.\par
Let $\{\xi,\eta\}$ be an orthonormal basis of $\mathbb{R}^2$. For any vector $m=(m_{\bot},m_3)$ with $m_{\bot}\in \mathbb{R}^2$, denote
\begin{equation}\notag
    m_\xi = m_{\bot} \cdot \xi,\quad m_\eta = m_{\bot}\cdot \eta
\end{equation}
and set
\begin{align}\label{rotvar}
    \Sigma_{\xi\eta}(m) = \left( m_3 m_\xi - \frac{m_\xi m_\eta^2}{2} - \frac{m^3_\xi}{6}\right) \xi   + \left( -m_3 m_\eta + \frac{m_\eta m_\xi^2}{2} + \frac{m^3_\eta}{6}\right) \eta+ \left(-\frac{m_\xi^2}{2}+\frac{m_\eta^2}{2} \right){\color{black}\hat z}.
\end{align}
Note that for $m:\Omega \to \mathbb{R}^3$, $\Sigma_{\xi\eta}(m)\in L^1_{\mathrm{loc}}(\Omega;\mathbb{R}^3)$ if $m_\bot \in L^3_{\mathrm{loc}}(\Omega)$ and $m_3\in L^{\frac{3}{2}}_{\mathrm{loc}}(\Omega)$. This and the fundamental equation \eqref{is} motivate the following definition.
\begin{definition}
Let $u\in W^{{\color{black} 1,}\frac{3}{2}}_{\mathrm{loc}}(\Omega)$ be such that $\nabla_\bot u \in L^3_{\mathrm{loc}}(\Omega;\mathbb{R}^2)$. We say that $u\in AG^{3D}(\Omega)$ if $\dive \Sig(\nabla u)$ is a finite Radon measure in $\Omega$ for all orthonormal bases $\{\xi,\eta\}$ of $\R^2$.
\end{definition}
\begin{definition}
If $u\in \ag$ and $\dz u = \frac{1}{2}|\nabla_\bot u|^2$, we say $u\in \ago$.
\end{definition}
Let $\{e_1,e_2\}$ be the standard basis of $\R^2$, and let $\{\e_1,\e_2\}$ be given by
\begin{equation}
    \e_1 = \left(\frac{1}{\sqrt{2}},\frac{1}{\sqrt{2}}  \right),\quad \e_2 = \left(-\frac{1}{\sqrt{2}},\frac{1}{\sqrt{2}} \right).
\end{equation}
A routine calculation, yields a formula for $\Sig$ in terms of $\Sigma_{e_1e_2}$ and $\Sigma_{\e_1\e_2}$. The $\R^2$-valued version of this formula for the Jin-Kohn entropies was first derived in \cite{AmbDeLMan99}, and we do not include the proof. 
\begin{lemma}
For any $m \in L^{\frac{3}{2}}_{\mathrm{loc}}(\Omega;\mathbb{R}^3)$ such that $m_\bot \in L^3_{\mathrm{loc}}(\Omega;\mathbb{R}^2)$, if
$\xi = (\cos \theta,\sin\theta)$ and $\eta  = (-\sin \theta,\cos \theta)$, then
\begin{equation}\label{entropycombo}
    \Sigma_{\xi\eta}(m) = \cos 2 \theta\, \Sigma_{e_1e_2}(m) + \sin 2\theta\, \Sigma_{\e_1\e_2}(m).
\end{equation}
\end{lemma}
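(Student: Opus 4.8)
The plan is to parametrize the rotated frames by a single angle and to observe that $\Sigma_{\xi\eta}(m)$, viewed as a function of that angle, is built out of the single harmonic $2\theta$, which forces \eqref{entropycombo}. Concretely, write $\xi = (\cos\theta,\sin\theta)$, $\eta = (-\sin\theta,\cos\theta)$ as in the statement, and note that $\Sigma_{e_1e_2}(m) = \Sigma_{\xi\eta}(m)\big|_{\theta=0}$ while $\Sigma_{\e_1\e_2}(m) = \Sigma_{\xi\eta}(m)\big|_{\theta=\pi/4}$, since $\e_1=(\cos\tfrac\pi4,\sin\tfrac\pi4)$ and $\e_2=(-\sin\tfrac\pi4,\cos\tfrac\pi4)$. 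The key claim is then: for each fixed $m\in\R^3$, every component of $\theta\mapsto\Sigma_{\xi\eta}(m)$ in the fixed basis $\{\hat x,\hat y,\hat z\}$ is a real linear combination of $\cos2\theta$ and $\sin2\theta$, with no constant term and no other harmonics. Granting this, the elementary fact that a map of the form $v(\theta)=\vec a\cos2\theta+\vec b\sin2\theta$ has $\vec a=v(0)$ and $\vec b=v(\pi/4)$, hence $v(\theta)=\cos2\theta\,v(0)+\sin2\theta\,v(\pi/4)$, applied componentwise to $v(\theta)=\Sigma_{\xi\eta}(m)$ gives \eqref{entropycombo} pointwise. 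The stated integrability of $m_\bot$ and $m_3$ makes all three entropies $L^1_{\mathrm{loc}}$, so the pointwise identity also holds in $L^1_{\mathrm{loc}}$.

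To prove the claim I would substitute $m_\xi=m_1\cos\theta+m_2\sin\theta$ and $m_\eta=m_2\cos\theta-m_1\sin\theta$ into \eqref{rotvar} and simplify the three components. The $\hat z$-component is $\tfrac12(m_\eta^2-m_\xi^2)$; since $m_\xi^2+m_\eta^2=m_1^2+m_2^2$, expanding gives $m_\eta^2-m_\xi^2=(m_2^2-m_1^2)\cos2\theta-2m_1m_2\sin2\theta$, which is of the required form. For the in-plane part the cleanest bookkeeping is to identify $\R^2\cong\mathbb C$ via $m_\bot\leftrightarrow w:=m_1+im_2$, $\xi\leftrightarrow e^{i\theta}$, $\eta\leftrightarrow ie^{i\theta}$; then $\zeta:=m_\xi+im_\eta=we^{-i\theta}$, and the in-plane part of $\Sigma_{\xi\eta}(m)$ is the complex number $(A+iB)e^{i\theta}$, where $A$ and $B$ are the $\xi$- and $\eta$-coefficients in \eqref{rotvar}. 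Rewriting the cubic terms through $\zeta,\bar\zeta$ and using $|\zeta|=|w|$ collapses this to $A+iB=\bigl(m_3-\tfrac14|w|^2\bigr)\bar\zeta+\tfrac1{12}\zeta^3$, hence the in-plane part equals $\bigl(m_3-\tfrac14|w|^2\bigr)\bar w\,e^{2i\theta}+\tfrac1{12}w^3e^{-2i\theta}$. As $e^{\pm2i\theta}\in\operatorname{span}_{\R}\{\cos2\theta,\sin2\theta\}$, both in-plane components are linear combinations of $\cos2\theta$ and $\sin2\theta$, establishing the claim. (Alternatively one can avoid complex numbers and expand directly, using $m_\xi\cos\theta+m_\eta\sin\theta=m_1\cos2\theta+m_2\sin2\theta$, $m_\eta\cos\theta+m_\xi\sin\theta=m_2$, $m_\xi\cos\theta=\tfrac12(m_1+m_1\cos2\theta+m_2\sin2\theta)$ and $m_\eta\sin\theta=\tfrac12(-m_1+m_1\cos2\theta+m_2\sin2\theta)$ to reduce every monomial to frequency-two form.)

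There is no serious obstacle; the only delicate point is the algebra in the cubic in-plane terms, where it is easy to confuse a factor of $\zeta$ with $\bar\zeta$ (equivalently, to generate a spurious $\theta$-independent term). For that reason I would sanity-check the simplified formula by evaluating it at $\theta=0$ and $\theta=\pi/4$ and confirming it reproduces $\Sigma_{e_1e_2}(m)$ and $\Sigma_{\e_1\e_2}(m)$ respectively; by the first paragraph this check is in fact equivalent to the full identity \eqref{entropycombo}.
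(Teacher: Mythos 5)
Your proof is correct. The paper itself omits the proof of this lemma, stating only that it follows from ``a routine calculation'' and referring to \cite{AmbDeLMan99} for the 2D version for the Jin-Kohn entropies, so there is no in-paper argument to compare against.

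Your complex-variable bookkeeping is a clean way to organize the algebra. I verified the key identity: with $\zeta = m_\xi + i m_\eta = w e^{-i\theta}$, one indeed has
\begin{equation*}
A + iB = \Bigl(m_3 - \tfrac14|w|^2\Bigr)\bar\zeta + \tfrac1{12}\zeta^3,
\end{equation*}
so that the in-plane part $(A+iB)e^{i\theta}$ equals $(m_3-\tfrac14|w|^2)\bar w\,e^{2i\theta}+\tfrac1{12}w^3e^{-2i\theta}$, a pure frequency-$2$ expression, and the $\hat z$-component $\tfrac12(m_\eta^2-m_\xi^2)=\tfrac12\bigl[(m_2^2-m_1^2)\cos 2\theta - 2m_1m_2\sin2\theta\bigr]$ is frequency $2$ as well. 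Your observations that $\Sigma_{e_1e_2}(m)=\Sigma_{\xi\eta}(m)|_{\theta=0}$ and $\Sigma_{\e_1\e_2}(m)=\Sigma_{\xi\eta}(m)|_{\theta=\pi/4}$ are correct, and the elementary interpolation fact $v(\theta)=v(0)\cos2\theta+v(\pi/4)\sin2\theta$ for a pure frequency-$2$ map then gives \eqref{entropycombo}. The remark that the pointwise identity transfers to $L^1_{\mathrm{loc}}$ under the stated integrability is also correct (and matches the remark preceding the definition of $AG^{3D}$). One could alternatively expand each monomial directly using the half-angle identities you mention, or differentiate twice in $\theta$ to show $\partial_\theta^2\Sigma_{\xi\eta}(m)=-4\Sigma_{\xi\eta}(m)$; your version has the advantage of exposing exactly why no constant ($\theta$-independent) term appears.
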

\begin{definition}
For any $u \in \ag$, let $Iu$ be the finite vector-valued Radon measure
\begin{equation}
    I u = (\dive \Sigma_{e_1e_2}(\nabla u) , \dive \Sigma_{\e_1\e_2}(\nabla u)).
\end{equation}
\end{definition}
By \eqref{entropycombo}, $u \in \ag$ if and only if $\dive \Sigma_{e_1e_2}(\nabla u)$, $\dive \Sigma_{\e_1\e_2}(\nabla u)$ are finite Radon measures. {\color{black}Let $\mathcal{B}_{+} $ denote the sets of all orthonormal bases of $\mathbb{R}^2$ with the same orientation as $\{e_1,e_2\}$. We decompose the domain and on each piece we consider the rotated measure $\dive\Sigma_{\xi\eta}(\nabla u) $. }The following result shows that $|Iu|$ is the supremum of the measures $|\dive \Sig(\nabla u)|$ {\color{black} over $\{\xi,\eta\}\in \mathcal{B}_+$}.
\begin{theorem}\label{repformula} 
\begin{enumerate}[(i)]
\item For any $u\in \ag$ and Borel subset $B$ of $\Omega$,
\begin{align}\notag
  \hspace{-.5cm} |Iu|(B) &= \left(\bigvee_{\{\xi,\eta \}{\color{black}\in \mathcal{B}_+}} \left| \dive \Sigma_{\xi\eta}(\nabla u)\right| \right)(B)\\ \label{regform}
    &= \sup \left\{\sum_{j=1}^J \left|\dive\Sigma_{\xi_j\eta_j}(\nabla u) \right|(B_j): J \in \mathbb{N}, \textup{ $\{B_j\}_{j=1}^J$ is a Borel partition of }B, {\color{black}\{\xi_j,\eta_j \} \in \mathcal{B}_{+} }\right\}
\end{align}
\item For an open set $A$, 
\begin{equation}\label{openformula}
   \hspace{-.5cm} |Iu|(A)= \sup \left\{\sum_{j=1}^J \left|\dive\Sigma_{\xi_j\eta_j}(\nabla u) \right|(A_j):J \in \mathbb{N}, A_j \textup{ are open, disjoint, }A_j\ssubset A, {\color{black}\{\xi_j,\eta_j \} \in \mathcal{B}_{+} }\right\}.
\end{equation}
\item If $\{u_n \}\subset \ag$ are such that
\begin{equation*}
    \nabla_\bot u_n \underset{L^3}{\to} \nabla_\bot u,\quad \dz u_n \underset{L^{3/2}}{\to}\dz u,
\end{equation*}
and
\begin{equation}\label{measbound}
    |Iu_n| (\Omega) \leq C < \infty \quad \textup{ for all }n,
\end{equation}
then $u\in \ag$ and
\begin{equation}\notag
    I u_n \overset{\ast}{\rightharpoonup} Iu.
\end{equation}
Thus for every open set $A\subset\Omega$,
\begin{equation}\label{lowers}
    |Iu|(A) \leq \liminf_{n\to \infty} |Iu_n|(A).
\end{equation}
\end{enumerate}
\end{theorem}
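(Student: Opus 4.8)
The plan is to reduce all three parts to the algebraic identity \eqref{entropycombo}. Write $\mu_1 = \dive\Sigma_{e_1e_2}(\nabla u)$, $\mu_2 = \dive\Sigma_{\e_1\e_2}(\nabla u)$, so $Iu = (\mu_1,\mu_2)$, and parametrize an oriented orthonormal basis $\{\xi,\eta\}\in\mathcal{B}_+$ by its rotation angle $\theta\in[0,2\pi)$; then \eqref{entropycombo} says $\dive\Sig(\nabla u) = \cos 2\theta\,\mu_1 + \sin 2\theta\,\mu_2$. In particular each $\dive\Sig(\nabla u)$ is a real linear combination of the two finite measures $\mu_1,\mu_2$, hence a finite Radon measure, and the content of (i)--(ii) becomes a general fact about $\R^2$-valued Radon measures which I would isolate as a lemma: if $\mu=(\mu_1,\mu_2)$ is a finite $\R^2$-valued Radon measure and $\mu_\theta := \cos 2\theta\,\mu_1 + \sin 2\theta\,\mu_2$, then $|\mu|$ is the least upper bound (join) of the family $\{|\mu_\theta|\}_\theta$ and is given by the partition formulas \eqref{regform}, \eqref{openformula}.

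To prove this lemma I would use the polar decomposition $\mu = \sigma\,|\mu|$ from the Radon--Nikodym theorem, with $|\sigma|=1$ $|\mu|$-a.e.; writing $\sigma = (\cos\alpha,\sin\alpha)$ for a Borel function $\alpha\colon\Omega\to[0,2\pi)$, one computes $\mu_\theta = \cos(2\theta-\alpha)\,|\mu|$, so that for every Borel set $B$
\[
|\mu_\theta|(B) = \int_B |\cos(2\theta-\alpha)|\,d|\mu| \le |\mu|(B),
\]
the inequality being trivial since $|\cos|\le 1$. Summing over a Borel partition and using disjointness gives ``$\le$'' in \eqref{regform} and also $\bigvee_\theta|\mu_\theta|\le|\mu|$. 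For the reverse inequality, fix $\delta>0$, partition $[0,2\pi)$ into arcs $I_k$ of length $<\delta$ centered at $\alpha_k$, set $B_k := B\cap\alpha^{-1}(I_k)$ (Borel, since $\alpha$ is Borel), and choose $\theta_k$ with $2\theta_k=\alpha_k$; then $|\cos(2\theta_k-\alpha)|\ge\cos\delta$ on $B_k$, so $\sum_k|\mu_{\theta_k}|(B_k)\ge\cos\delta\,|\mu|(B)$, and letting $\delta\to0$ yields \eqref{regform} (the first equality there being the classical formula for the join of a family of positive measures). Part (ii) follows from (i) by inner regularity: given $\tau>0$, first take $A'\ssubset A$ with $|\mu|(A\setminus A')<\tau$, then a Borel partition $\{B_j\}$ of $A'$ and angles $\theta_j$ realizing \eqref{regform} up to $\tau$; by inner regularity pick pairwise disjoint compact $K_j\subset B_j$ with $|\mu_{\theta_j}|(B_j\setminus K_j)<\tau 2^{-j}$, and thicken these finitely many pairwise positively separated sets (also separated from $\partial A$) to pairwise disjoint open $A_j$ with $K_j\subset A_j\ssubset A$; monotonicity $|\mu_{\theta_j}|(A_j)\ge|\mu_{\theta_j}|(K_j)$ then gives \eqref{openformula}.

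For (iii), the hypotheses $\nabla_\bot u_n\to\nabla_\bot u$ in $L^3$ and $\dz u_n\to\dz u$ in $L^{3/2}$ imply, applying H\"older term-by-term to \eqref{rotvar} (the quadratic cross terms lie in $L^1$ since $m_3\in L^{3/2}$, $m_\xi,m_\eta\in L^3$, the cubic terms since $m_\xi,m_\eta\in L^3$, and each factor converges in the relevant $L^p$), that $\Sig(\nabla u_n)\to\Sig(\nabla u)$ in $L^1_{\mathrm{loc}}(\Omega;\R^3)$ for every $\{\xi,\eta\}$, hence $\dive\Sig(\nabla u_n)\to\dive\Sig(\nabla u)$ in $\mathcal{D}'(\Omega)$. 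By the pointwise bound $|\dive\Sig(\nabla u_n)|\le|Iu_n|$ established in (i), assumption \eqref{measbound} gives $|\dive\Sig(\nabla u_n)|(\Omega)\le C$ uniformly, so the distributional limit $\dive\Sig(\nabla u)$ is a finite Radon measure; thus $u\in\ag$, and taking $\{\xi,\eta\}\in\{\{e_1,e_2\},\{\e_1,\e_2\}\}$ together with uniqueness of distributional limits yields $Iu_n\overset{\ast}{\rightharpoonup}Iu$ in $(C_c(\Omega))^\ast$ (first on $C_c^\infty$ test functions, then on $C_c$ by density and the uniform mass bound). Finally \eqref{lowers} is exactly lower semicontinuity of the total variation of vector-valued measures under weak-$\ast$ convergence, evaluated on open sets.

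The main obstacle is not conceptual but bookkeeping: making the partition-by-polar-angle argument in (i) fully rigorous (measurability of $\alpha$ and of the level sets $B_k$, and the passage to the join) and carrying out the inner-regularity surgery in (ii) so the replacement sets are simultaneously open, pairwise disjoint, compactly contained in $A$, and lose little mass. All of this is the three-dimensional transcription of the corresponding steps in \cite{AmbDeLMan99} for the Jin--Kohn entropies, with \eqref{entropycombo} playing the role of the two-entropy decomposition used there.
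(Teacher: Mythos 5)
Your proposal follows the same route as the paper's proof: both rest on the Riesz/Radon--Nikodym polar decomposition $Iu = g\,|Iu|$ with $|g|=1$ a.e., the identity \eqref{entropycombo} written as $\dive\Sig(\nabla u) = (\cos 2\theta,\sin 2\theta)\cdot g\,|Iu|$, approximation of $g$ by $\mathbb{S}^1$-valued simple functions (your partition of the circle into arcs $I_k$ is exactly how that approximation is carried out) for part (i), inner regularity for part (ii), and H\"older plus the uniform mass bound \eqref{measbound} for the distributional and then weak-$\ast$ convergence in part (iii). Your version simply spells out the approximation and the inner-regularity surgery in more detail than the paper does.
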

\begin{proof}
By Riesz's Theorem and \eqref{entropycombo}, we have the equality of measures
\begin{equation*}
    \dive \Sig (\nabla u)  = (\cos 2\theta, \sin 2\theta) \cdot Iu = (\cos 2\theta,\sin 2\theta) \cdot g |Iu|
\end{equation*}
for some $g$ which is unit-valued $|Iu|$-a.e. One then has for any Borel set $B$ and Borel partition $\{ B_j\}_{j=1}^J$
\begin{equation}
    \sum_{j=1}^J \left| \dive \Sigma_{\xi_j \eta_j} (\nabla u)\right|(B_j) = \sum_{j=1}^J \int_{B_j}\left|(\cos 2\theta_j,\sin 2\theta_j) \cdot g \right| \,d|Iu|\leq \sum_{j=1}^J |Iu|(B_j) .
\end{equation}
Approximating $g$ by functions {\color{black} that take finitely many values in $\mathbb{S}^1$} yields \eqref{regform}. The representation \eqref{openformula} when $A$ is open is a consequence of the usual approximation theorems for Radon measures. To see this, note that for any partition $\{B_j\}_{j=1}^J$ of $A$, we can approximate $B_j$ from inside by disjoint compact sets $\{K_j\}_{j=1}^J$ and then $K_j$ by the desired open sets. \par
For $(iii)$, by H\"older's inequality and the convergence of $\nabla u_n $ to $\nabla u$, we have
\begin{equation}\notag
    \Sigma_{e_1e_2}(\nabla u_n) \to \Sigma_{e_1e_2}(\nabla u ),\quad \Sigma_{\e_1\e_2}(\nabla u_n)\to \Sigma_{\e_1\e_2}(\nabla u)\quad\textup{in }L^1(\Omega;\mathbb{R}^3),
\end{equation}
so that the divergences converge in the sense of distributions. Thus \eqref{measbound} implies that $Iu$ is Radon and $Iu_n \overset{\ast}{\rightharpoonup} Iu$, and \eqref{lowers} is a consequence of the weak-$\ast$ convergence.
\end{proof}
\begin{proposition}
If $u\in W^{2,\sfrac{9}{5}}(\Omega)$, then
\begin{equation}\label{admcalc}
\dive \Sigma_{\xi \eta} (\nabla u) = \left(\dz u - \frac{1}{2}|\nabla u|^2 \right)(\dZZ u - \dEE u)
\end{equation}
and
\begin{equation}\label{h2form}
|Iu | = \left|\dz u - \frac{1}{2}|\nabla_\bot u|^2 \right| | \lambda_1 - \lambda_2|\mathcal{L}^3\mres \Omega   
\end{equation}
where $\lambda_1$, $\lambda_2$ are the eigenvalues of $\nabla^2_\bot u$. 
\end{proposition}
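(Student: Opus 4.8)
The plan is to establish the pointwise identity \eqref{admcalc} by approximation and then to read off the representation \eqref{h2form} from it together with the elementary formula for the spectral gap of a symmetric $2\times2$ matrix. First I would record the integrability supplied by the hypothesis. Since $\Omega$ is bounded with Lipschitz boundary and $\tfrac{1}{9/2}=\tfrac{1}{9/5}-\tfrac13$, the Sobolev embedding gives $W^{2,9/5}(\Omega)\hookrightarrow W^{1,9/2}(\Omega)$, so $\nabla u\in L^{9/2}(\Omega;\R^3)$. Every monomial of $\Sigma_{\xi\eta}(\nabla u)$ is then at worst cubic in $\nabla_\bot u$ or a product $\dz u\,\nabla_\bot u$, so $\Sigma_{\xi\eta}(\nabla u)\in L^{3/2}(\Omega;\R^3)\subset L^1_{\mathrm{loc}}(\Omega;\R^3)$ and its distributional divergence is well defined. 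On the right-hand side of \eqref{admcalc}, the factor $A:=\dz u-\tfrac12|\nabla_\bot u|^2$ belongs to $L^{9/4}(\Omega)$ and the factor $B:=\dZZ u-\dEE u$ to $L^{9/5}(\Omega)$; since $\tfrac49+\tfrac59=1$, H\"older's inequality puts the product $AB$ in $L^1(\Omega)$. It is precisely this endpoint pairing that forces the exponent $9/5$.

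For \eqref{admcalc} itself, since the claim is local I would fix $V\ssubset\Omega$ and mollify, $u_k:=u\ast\rho_k$, so that $\nabla u_k\to\nabla u$ in $L^{9/2}(V)$ and $\nabla^2 u_k\to\nabla^2 u$ in $L^{9/5}(V)$. For the smooth functions $u_k$ the identity is just \eqref{divesigma}, obtained by direct differentiation. To pass to the limit, note that $\Sigma_{\xi\eta}(\nabla u_k)\to\Sigma_{\xi\eta}(\nabla u)$ in $L^1(V;\R^3)$ by continuity of the (polynomial) superposition operator from $L^{9/2}$ into $L^{3/2}$, so the divergences converge in $\mathcal D'(V)$; and writing $A_k:=\dz u_k-\tfrac12|\nabla_\bot u_k|^2\to A$ in $L^{9/4}(V)$, $B_k:=\dZZ u_k-\dEE u_k\to B$ in $L^{9/5}(V)$, the estimate $\|A_kB_k-AB\|_{L^1(V)}\le\|A_k-A\|_{L^{9/4}(V)}\|B_k\|_{L^{9/5}(V)}+\|A\|_{L^{9/4}(V)}\|B_k-B\|_{L^{9/5}(V)}$ gives $A_kB_k\to AB$ in $L^1(V)$. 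Hence \eqref{admcalc} holds in $L^1(V)$, and therefore a.e.\ in $\Omega$.

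Finally I would deduce \eqref{h2form}. Specializing \eqref{admcalc} to $\{\xi,\eta\}=\{e_1,e_2\}$ gives $\dive\Sigma_{e_1e_2}(\nabla u)=A(\dxx u-\dyy u)$, and to $\{\xi,\eta\}=\{\e_1,\e_2\}$, where $\partial_{\e_1}^2-\partial_{\e_2}^2=2\dxy$, gives $\dive\Sigma_{\e_1\e_2}(\nabla u)=2A\,\dxy u$. Both densities lie in $L^1(\Omega)$ by the first step, so the vector measure $Iu=A\,(\dxx u-\dyy u,\,2\dxy u)\,\mathcal L^3\mres\Omega$ is absolutely continuous and its total variation is $|Iu|=|A|\,\bigl[(\dxx u-\dyy u)^2+4(\dxy u)^2\bigr]^{1/2}\mathcal L^3\mres\Omega$. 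Since the symmetric Hessian $\nabla_\bot^2 u$ with entries $\dxx u,\dxy u,\dxy u,\dyy u$ has eigenvalues $\lambda_{1,2}=\tfrac12\bigl[(\dxx u+\dyy u)\pm((\dxx u-\dyy u)^2+4(\dxy u)^2)^{1/2}\bigr]$, we have $|\lambda_1-\lambda_2|=\bigl[(\dxx u-\dyy u)^2+4(\dxy u)^2\bigr]^{1/2}$, which is exactly \eqref{h2form}. I do not anticipate a real obstacle; the one point that requires care is the exponent bookkeeping in the first two steps — in particular the fact that $A_kB_k\to AB$ rests on the \emph{strong} $L^{9/4}\times L^{9/5}$ convergence of the mollifications, since the H\"older pairing is at the endpoint $\tfrac49+\tfrac59=1$ and no weak-compactness argument would suffice.
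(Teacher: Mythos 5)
Your proof is correct and follows essentially the same strategy as the paper for \eqref{admcalc}: track the Sobolev embedding $W^{2,9/5}\hookrightarrow W^{1,9/2}$, note the endpoint H\"older pairing $L^{9/4}\times L^{9/5}\to L^1$, pass the identity from smooth approximants to $u$ using strong convergence of $\Sigma_{\xi\eta}(\nabla u_k)$ in $L^1$ and of the product $A_kB_k$ in $L^1$. (The paper phrases this by testing against $\varphi\in C^\infty_c$ and using a global smooth approximating sequence rather than local mollification, but that is cosmetic.)

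For \eqref{h2form} you take a slightly different and arguably more streamlined route. The paper diagonalizes $\nabla_\bot^2 u$ pointwise and shows that $\sup_{\{\xi,\eta\}\in\mathcal{B}_+}|\dZZ u-\dEE u|=|\lambda_1-\lambda_2|$, invoking the representation of $|Iu|$ as a supremum over rotations established in the preceding theorem. You instead compute the two component densities $A(\dxx u-\dyy u)$ and $2A\,\dxy u$ explicitly, observe that $Iu$ is then an absolutely continuous $\mathbb{R}^2$-valued measure, and read off $|Iu|$ as $|A|\bigl[(\dxx u-\dyy u)^2+4(\dxy u)^2\bigr]^{1/2}\mathcal{L}^3\mres\Omega$, which equals $|A||\lambda_1-\lambda_2|$ by the elementary discriminant identity for symmetric $2\times 2$ matrices. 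Since $\dive\Sigma_{\xi\eta}=(\cos2\theta,\sin2\theta)\cdot Iu$, the two computations are equivalent; your version bypasses the eigenvector decomposition and the supremum formula, using only the definition of $Iu$ and the Euclidean norm of its density, which makes this step entirely self-contained.
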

\begin{proof}
If $u$ is smooth, we can calculate
\begin{align} \notag
    \dive \Sigma_{\xi\eta}( \nabla u) 
    &= \dZ \left(\Sigma_{\xi\eta}(\nabla u) \cdot \xi \right) + \dE \left( \Sigma_{\xi\eta}(\nabla u) \cdot \eta \right) + \dz \left(\Sigma_{\xi\eta}(\nabla u) \cdot \hat z \right)\\ \notag
    &= \dZz u \dZ u + \dz u \dZZ u - \dZ u \dE u \dZE u - \frac{\dZZ u (\dE u)^2}{2}- \frac{(\dZ u)^2 \dZZ u}{2} \\ \notag
    &\quad - \dEz u \dE u - \dz u \dEE u + \dZ u \dE u \dEZ u + \frac{\dEE u (\dZ u)^2}{2}+ \frac{(\dE u)^2\dEE u}{2}\\ \notag
    &\quad - \dZ u \dzZ u + \dE u \dzE u \\ \label{jkcalc}
    &= \left(\dz u - \frac{|\nabla_\bot u|^2}{2} \right)\left(\dZZ u - \dEE u\right).
\end{align}
If $u\in W^{2,\sfrac{9}{5}}(\Omega)$, then by the Sobolev embedding, $\nabla u\in L^{\sfrac{9}{2}}(\Omega)$. For $u_n$ smooth and converging to $u$ in ${\color{black}W^{2,\sfrac{9}{5}}(\Omega)}$ , H\"older's inequality yields {\color{black} for any test function $\varphi\in C_0^{\infty}(\Omega),$}
\begin{align*}
\int_\Omega -\Sigma_{\xi\eta}(\nabla u) \cdot \nabla \varphi\,d\ex &=\lim_{n \to \infty}
\int_\Omega  -\Sigma_{\xi\eta}(\nabla u_n) \cdot \nabla \varphi\,d\ex \\
&=\lim_{n \to \infty}\int_\Omega  \left(\dz u_n - \frac{|\nabla_\bot u_n|^2}{2} \right)\left(\dZZ u_n - \dEE u_n\right) \varphi \,d\ex \\
& =\int_\Omega  \left(\dz u - \frac{|\nabla_\bot u|^2}{2} \right)\left(\dZZ u - \dEE u\right)\varphi\,d\ex,
\end{align*}
so that \eqref{admcalc} is proved.
\par
For \eqref{h2form}, if $v=v(\ex)$ and $w=w(\ex)$ are the orthonormal eigenvectors of $\nabla_\bot^2 u(\ex)$ with corresponding eigenvalues $\lambda_1(\ex)=\partial_v^2 u(\ex)$ and $\lambda_2(\ex)=\partial_w^2 u(\ex)$, then direct calculation gives
\begin{align}\notag
\left|\dZZ u - \dEE u \right| = \left|\partial_{v}^2 u - \partial_{w}^2 u \right| \left|(\xi \cdot v)^2 - (\eta \cdot v)^2 \right| =  \left|\partial_{v}^2 u - \partial_{w}^2 u \right| \left|(\xi \cdot w)^2 - (\eta \cdot w)^2 \right| .
\end{align}
From this we may conclude that
\begin{equation*}
\sup_{\{\xi,\eta\} {\color{black}\in \mathcal{B}_+}} \left|\dZZ u - \dEE u \right| = |\lambda_1 - \lambda_2|,
\end{equation*}
and thus
\begin{equation*}
|Iu|=\left|\dz u - \frac{1}{2}|\nabla_\bot u|^2 \right| | \lambda_1 - \lambda_2|\mathcal{L}^3\mres \Omega.
\end{equation*}\end{proof}
{\color{black}
\begin{remark}
 The condition that $u \in W^{2,\frac{9}{5}}(\Omega)$ is stronger than merely requiring that $u \in AG^{3D}(\Omega)$. However, by H\"older's inequality and the Sobolev embedding, $9/5$ is the optimal exponent for which the measure $\dive \Sigma(\nabla u)$is  absolutely continuous with respect to the Lebesgue measure and is thus represented by the integration of an $L^1$-function; cf. \cite[Proposition 3.4]{AmbDeLMan99} for the corresponding result in two dimensions.
\end{remark}
}

The next proposition gives a formula for $|Iu|$ under certain regularity conditions on $u\in \ago$ necessary to apply the BV Chain rule, cf. \Cref{bvchain} {\color{black}and \Cref{bv bound chain}.}
\begin{proposition}\label{bvform}
If $u\in \ago \cap W^{1,\infty}(\Omega)$ and $\nabla u \in BV(\Omega;\mathbb{R}^3)$, then
\begin{equation*}
    |Iu| = \frac{|\nabla_\bot u^+ - \nabla_\bot u^-|^4}{12|\nabla u^+ - \nabla u^-|}{\color{black}\mathcal{H}}^2\mres J_{\nabla u}.
\end{equation*}
\end{proposition}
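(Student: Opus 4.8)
The plan is to apply the $BV$ chain rule in trace form, \eqref{chaindiv}, to the cubic map $F_{\xi\eta}:\mathbb{R}^3\to\mathbb{R}^3$ defined by $\Sigma_{\xi\eta}=F_{\xi\eta}$ in \eqref{rotvar}, taking $m=\nabla u$, and to show that only the jump term survives. Since $F_{\xi\eta}$ does not have bounded gradient we first invoke $\nabla u\in W^{1,\infty}(\Omega)$, say $\|\nabla u\|_{L^\infty}\leq R$: we replace $F_{\xi\eta}$ outside $\overline{B_{2R}}$ by a $C^1$ map with bounded gradient agreeing with $F_{\xi\eta}$ on $\overline{B_R}$, which by \Cref{bv bound chain} affects nothing because $\nabla u$, its one-sided traces $\nabla u^\pm$, and its precise representative $\widetilde{\nabla u}$ all take values in $\overline{B_R}$. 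Then \eqref{chaindiv} decomposes $\dive\Sigma_{\xi\eta}(\nabla u)$ into an absolutely continuous part $\tr(\nabla F_{\xi\eta}(\nabla u)\,\nabla(\nabla u))\,\mathcal{L}^3$, a Cantor part $\tr(\nabla F_{\xi\eta}(\widetilde{\nabla u})\,D^c\nabla u)$, and a jump part $(\Sigma_{\xi\eta}(\nabla u^+)-\Sigma_{\xi\eta}(\nabla u^-))\cdot\nu_{\nabla u}\,\mathcal{H}^2\mres J_{\nabla u}$.

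The heart of the argument is that the first two terms vanish, driven by the algebraic observation that $\nabla F_{\xi\eta}(p)$ is antisymmetric whenever $p$ lies on the paraboloid $K:=\{m\in\mathbb{R}^3:m_3=\tfrac12|m_\bot|^2\}$. Indeed, by \eqref{jkcalc} the scalar $\dive\Sigma_{\xi\eta}(\nabla u)$ equals $(\dz u-\tfrac12|\nabla_\bot u|^2)(\dZZ u-\dEE u)$ as an identity in $\nabla u$ and $\nabla^2 u$; evaluating this against quadratic polynomials shows that if $\nabla u(x)=p\in K$ then $\dive\Sigma_{\xi\eta}(\nabla u)(x)=0$ for every symmetric Hessian at $x$, which forces the symmetric part of $\nabla F_{\xi\eta}(p)$ to vanish (one can also read this off directly from \eqref{rotvar}). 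Since $u\in\ago$ gives $\nabla u\in K$ $\mathcal{L}^3$-a.e., and since $\nabla^2 u=D(\nabla u)$ is a symmetric matrix-valued measure so that each summand of its decomposition \eqref{threesing} is symmetric, the absolutely continuous part of $\dive\Sigma_{\xi\eta}(\nabla u)$ is the trace of a product of the antisymmetric matrix $\nabla F_{\xi\eta}(\nabla u)$ with the symmetric matrix $\nabla(\nabla u)$, hence vanishes. For the Cantor part, \Cref{applimrem} shows $\widetilde{\nabla u}=\ap\lim\nabla u$ off a $|D^c\nabla u|$-null set, and wherever this approximate limit $z$ exists the $L^\infty$ bound gives $|\nabla_\bot u|^2\to|z_\bot|^2$ in average, so $z_3=\ap\lim\dz u=\ap\lim\tfrac12|\nabla_\bot u|^2=\tfrac12|z_\bot|^2$ and thus $z\in K$; the same antisymmetry--symmetry pairing then kills the Cantor part. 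Hence for every orthonormal basis $\{\xi,\eta\}$,
\[
\dive\Sigma_{\xi\eta}(\nabla u)=\big(\Sigma_{\xi\eta}(\nabla u^+)-\Sigma_{\xi\eta}(\nabla u^-)\big)\cdot\nu_{\nabla u}\,\mathcal{H}^2\mres J_{\nabla u}.
\]

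To conclude, we identify $|Iu|$. Setting $d_{\xi\eta}(x):=\big(\Sigma_{\xi\eta}(\nabla u^+(x))-\Sigma_{\xi\eta}(\nabla u^-(x))\big)\cdot\nu_{\nabla u}(x)$, the previous step together with \Cref{repformula} and \eqref{entropycombo} gives
\[
|Iu|=\Big(\sup_{\{\xi,\eta\}}|d_{\xi\eta}|\Big)\,\mathcal{H}^2\mres J_{\nabla u}=\Big(\sup_{\theta}\big|\cos 2\theta\,d_{e_1e_2}+\sin 2\theta\,d_{\e_1\e_2}\big|\Big)\,\mathcal{H}^2\mres J_{\nabla u}.
\]
At $\mathcal{H}^2$-a.e.\ $x\in J_{\nabla u}$ the one-sided traces lie in $K$ (average the constraint over half-balls and use the $L^\infty$ bound), and by \eqref{jumpcond} $\nabla u^+-\nabla u^-=c\,\nu_{\nabla u}$ with $|c|=|\nabla u^+-\nabla u^-|$. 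Substituting these facts into \eqref{rotvar}, using the rotational invariance of the supremum to align the coordinate axes with $\nabla_\bot u^+-\nabla_\bot u^-$, and carrying out the resulting one-variable maximization over $\theta$ yields
\[
\sup_{\theta}|d_{\xi\eta}(x)|=\frac{|\nabla_\bot u^+(x)-\nabla_\bot u^-(x)|^4}{12|\nabla u^+(x)-\nabla u^-(x)|},
\]
which is the claimed identity; this last step is the 3D counterpart of the jump computation in \cite{AmbDeLMan99}.

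The main obstacle is twofold. Conceptually, it is the Cantor part, but the antisymmetry of $\nabla F_{\xi\eta}$ on $K$ combined with the symmetry of $D^c\nabla u$ makes it short --- in particular no appeal to Alberti's rank-one theorem is needed. Computationally, it is the final maximization over orthonormal bases, together with the routine but essential verification that the one-sided traces and approximate limits of $\nabla u$ inherit membership in $K$; this, and the truncation licensing the chain rule, is where the $W^{1,\infty}$ hypothesis enters.
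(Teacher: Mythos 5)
Your proposal is correct and follows essentially the same route as the paper: invoke the $BV$ chain rule (with the $W^{1,\infty}$ bound justifying its application to the cubic map, as in \Cref{bv bound chain}), show that the absolutely continuous and Cantor parts of $\dive\Sigma_{\xi\eta}(\nabla u)$ vanish because the precise representative of $\nabla u$ lies on the paraboloid $K$ off $J_{\nabla u}$, and then compute and optimize the jump density over orthonormal bases using the constraints $\nabla u^\pm\in K$ and $\nabla u^+-\nabla u^-\parallelsum\nu_{\nabla u}$. The one difference worth noting is presentational: you package the vanishing of the diffuse part as ``$\nabla F_{\xi\eta}(p)$ is antisymmetric on $K$ paired against the symmetric measure $D(\nabla u)$,'' whereas the paper writes out $\dive\Sigma_{\xi\eta}(\nabla u)\mres(\Omega\setminus J_{\nabla u})$ explicitly as $(\widetilde{\dz u}-\tfrac12|\widetilde{\nabla_\bot u}|^2)[\,\xi^T(D^a+D^c)\xi-\eta^T(D^a+D^c)\eta\,]$ and kills the scalar prefactor; these are the same observation, yours being a slightly more abstract restatement that makes the symmetry--antisymmetry mechanism explicit without appealing to Alberti's theorem. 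The final maximization you leave at the level of a reference to the 2D computation; the paper carries it out, obtaining $\bigl|(m_\xi^+-m_\xi^-)^4-(m_\eta^+-m_\eta^-)^4\bigr|/(12|m^+-m^-|)$ before taking the supremum, but the ingredients you list are exactly those needed.
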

\begin{proof}
For any $\{\xi,\eta\}$, first notice that due to the BV Chain rule,
\begin{equation*}
    |\dive \Sig (\nabla u)| \mres J_{\nabla u} = \left|\left( \Sig(\nabla u^+) - \Sig(\nabla u^-)\right)\cdot \nu_{\nabla u} \right|\mathcal{H}^2\mres J_{\nabla u}.
\end{equation*}
We compute the right hand side and then optimize over choices of $\{\xi,\eta\}{\color{black}\in \mathcal{B_{+}}}$ at each point in $J_{\color{black}{\nabla u}}$. To simplify the notation in the calculation, set $\nabla u=m$. Now since $u\in \ago$, $m^+$ and $m^-$ satisfy
\begin{equation}\label{use}
m_3^\pm  = \frac{1}{2} ((m_\xi^\pm) ^2 + (m_\eta^\pm) ^2)
\end{equation}
on $J_{m}$, which can be directly verified from \Cref{jumpdef}. This gives
\begin{align}\notag
\Sig(m^\pm)_\bot & = \left( m_3^\pm m_\xi^\pm  - \frac{m_\xi^\pm  (m_\eta^\pm )^2}{2} - \frac{(m_\xi^\pm )^3}{6}\right) \xi   + \left( -m_3^\pm m_\eta^\pm  + \frac{m_\eta^\pm  (m_\xi^\pm )^2}{2} + \frac{(m_\eta^\pm )^3}{6}\right) \eta \\ \label{onthejump}
&= \frac{(m_\xi^\pm )^3}{3} \xi - \frac{(m_\eta^\pm )^3}{3} \eta.
\end{align}
Using \eqref{onthejump} to rewrite $\Sig(m^\pm)$ and then \eqref{use} and $m^+ - m^-\parallelsum \nu_{m}$ to replace $\nu_{ m}$, cf. \Cref{jumpcond}, we have
\begin{align}\notag
    |&( \Sig(m^+) - \Sig(m^-))\cdot \nu_{m} | \\ \notag
    &=  \left|\left(\frac{(m_\xi^+)^3}{3}-\frac{(m_\xi^-)^3}{3} \right)\nu_\xi - \left(\frac{(m_\eta^+)^3}{3}-\frac{(m_\eta^-)^3}{3} \right)\nu_\eta + \left(-\frac{(m_\xi^+)^2}{2}+\frac{(m_\xi^-)^2}{2}+\frac{(m_\eta^+)^2}{2}-\frac{(m_\eta^-)^2}{2} \right)\nu_{\color{black}z} \right| \\ \notag
    &= \frac{1}{|m^+ - m^-|}\Bigg| \left(\frac{(m_\xi^+)^3}{3}-\frac{(m_\xi^-)^3}{3} \right)(m_\xi^+ - m_\xi^-) - \left(\frac{(m_\eta^+)^3}{3}-\frac{(m_\eta^-)^3}{3} \right)(m_\eta^+ - m_\eta^-)\\ \notag
    &\qquad\qquad+ \left(-\frac{(m_\xi^+)^2}{2}+\frac{(m_\xi^-)^2}{2}+\frac{(m_\eta^+)^2}{2}-\frac{(m_\eta^-)^2}{2} \right)\left(\frac{|m_\bot^+|^2}{2}-\frac{|m_\bot^-|^2}{2} \right) \Bigg|.
\end{align}
Expanding out the right hand side of the previous equation and combining like terms gives 
\begin{align}\label{jumpcost}
    |( \Sig(m^+) - \Sig(m^-))\cdot \nu_{m} | = \frac{\left|(m_\xi^+ - m_\xi^-)^4 - (m_\eta^+ - m_\eta^-)^4 \right|}{12|m^+ - m^-|} \leq \frac{|m_\bot^+ - m_\bot^-|^4}{12|m^+ - m^-|}.
\end{align}
Equality is achieved for $\{\xi,\eta\}$ such that $(m^+-m^-)_\bot \parallelsum \xi$ or $(m^+-m^-)_\bot \parallelsum \eta$. 
Taking the supremum over $\{\xi,\eta\}{\color{black}\in \mathcal{B}_{+}}$, we find that
\begin{equation}\label{jumpdens}
    |Iu|\mres J_{\nabla u} = \frac{|\nabla u_\bot^+ - \nabla u_\bot^-|^4}{12|\nabla u^+ - \nabla u^-|}\mathcal{H}^2\mres J_{\nabla u}.
\end{equation}
To complete the proof of the proposition, we must show that
\begin{equation*}
    |Iu|(\Omega \setminus J_{\nabla u})=0.
\end{equation*}
Recalling the BV Chain rule, \Cref{bvchain}, notice that away from $J_{\color{black}\nabla u}$, $\dive \Sig(\nabla u)$ can be computed using the usual chain rule formula by substituting $D^a (\nabla u)$ and $D^c(\nabla u)$ for the classical second derivatives of $u$. Therefore, by the same manipulations as in \eqref{jkcalc}, we have
{\begin{align*}
    \dive \Sig(\nabla u)&\mres (\Omega \setminus J_{\nabla u}) \\ 
&= \left(\widetilde{\dz u}  - \frac{|\widetilde{\nabla_\bot u}|^2}{2}\right)\big[  \xi^T D^a(\nabla u) \xi  +  \xi^T D^c(\nabla u)\xi- \eta^T D^a(\nabla u) \eta  -  \eta^T D^c(\nabla u)\eta \big].
\end{align*}}
Since $\widetilde{\dz u}  - \frac{|\widetilde{\nabla_\bot u}|^2}{2}=0$ for $\ex$ where the approximate limit $\widetilde{\nabla u}$ exists, it is zero $|D^a(\nabla u)|$- and $|D^c(\nabla u)|$-a.e. by \Cref{applimrem}. Thus $|Iu|\mres(\Omega \setminus J_{\nabla u})$ vanishes as well.
\end{proof}
We are ready to prove the lower bound. The theorem is stated under the assumption that $\e_n^2\int \overline{K}_n \to 0$, which can be enforced by mild control on the boundary data as in \eqref{kcontrol}.
\begin{theorem}\label{3dlbd}
Let $\Omega \subset \mathbb{R}^3$ be an open set. Consider $\e_n \searrow 0$ and ${\{ u_n \}\subset H^2(\Omega)}$ such that
\begin{equation}\label{l2conv}
     \nabla_\bot u_n \underset{L^3}{\to} \nabla_\bot u,\quad \dz u_n \underset{L^{3/2}}{\to}\dz u
\end{equation}
for some $u \in W^{1,\frac{3}{2}}(\Omega)$ with $\nabla_\bot u \in L^3(\Omega;\mathbb{R}^2)$. If {$\liminf_{n\to \infty}\mathcal{E}_{\e_n}(u_n)$ is finite and
\begin{equation}\label{vancurv}
\lim_{n\to \infty} \e_n^2\int_{\Omega}\overline {K}_n\,d\ex=0 ,
\end{equation}}
then $u\in\ago$ and
\begin{equation}\label{liminflapl}
   \liminf_{n\to \infty} \mathcal{E}_{\e_n}(u_n) \geq |Iu|(\Omega).
\end{equation}
When $u\in \ago \cap W^{1,\infty}(\Omega)$ and $\nabla u \in BV(\Omega;\mathbb{R}^3)$, then by \Cref{bvform}, the lower bound is given by
\begin{equation*}
    |Iu| = \frac{|\nabla_\bot u^+ - \nabla_\bot u^-|^4}{12|\nabla u^+ - \nabla u^-|}{\color{black}\mathcal{H}}^2\mres J_{\nabla u}.
\end{equation*}
\end{theorem}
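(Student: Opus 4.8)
The plan is to combine the fundamental inequality \eqref{is}, the representation result \Cref{repformula}, and the lower semicontinuity \eqref{lowers} in exactly the way the two-dimensional argument of \cite{AmbDeLMan99} does, with the extra work concentrated in handling the curvature term and in establishing that the limit $u$ lies in $\ago$. First I would record, for each $n$ and each orthonormal basis $\{\xi,\eta\}$ of $\mathbb{R}^2$, the rotated version of \eqref{is}: since $\Sigma_{\xi\eta}$ differs from $\Sigma$ only by a planar rotation and \eqref{3denergy} is rotationally symmetric in the $xy$-plane, the same computation as in \eqref{dircalc}--\eqref{is} yields
\begin{equation*}
\int_\Omega \dive \Sigma_{\xi\eta}(\nabla u_n)\,d\ex + 2\e_n\int_\Omega \overline{K}_n\,d\ex \leq \mathcal{E}_{\e_n}(u_n).
\end{equation*}
Here one must be slightly careful: for $u_n\in H^2(\Omega)$ the divergence $\dive\Sigma_{\xi\eta}(\nabla u_n)$ is an $L^1$ function on $\Omega$ but the divergence theorem picks up a genuine boundary term, so strictly one should work on compactly contained open subsets $A\ssubset\Omega$, where no boundary term appears and the inequality reads $\int_A \dive\Sigma_{\xi\eta}(\nabla u_n) + 2\e_n\int_A\overline{K}_n \leq \mathcal{E}_{\e_n}(u_n)$. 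By hypothesis \eqref{vancurv}, $\e_n^2\int_\Omega \overline{K}_n\to 0$; since $\e_n\to 0$ this is not literally the same as $\e_n\int_\Omega\overline{K}_n\to 0$, so I would instead absorb the curvature term directly: using $\overline{K}_n = \dive_\bot(\dx u_n\,\dyy u_n,\,-\dx u_n\,\dxy u_n)$ and the AM--GM splitting, $\left|2\e_n\int_A\overline{K}_n\right|$ is controlled (after integrating by parts twice, or equivalently by the identity $\int 4(\dxy u_n)^2 = \int[(\Delta_\bot u_n)^2 - (\dxx u_n - \dyy u_n)^2 - 4\overline{K}_n]$) so that the residual is nonnegative — this is exactly the content of \eqref{is}, which already has $\e_n\int 2\overline{K}_n$ on the good side. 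So the clean statement to use is precisely \eqref{is} applied to $A\ssubset\Omega$, and the hypothesis \eqref{vancurv} is what guarantees the curvature contribution does not spoil the limit.

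Next I would pass to the limit. From \eqref{l2conv} and H\"older's inequality, $\Sigma_{\xi\eta}(\nabla u_n)\to\Sigma_{\xi\eta}(\nabla u)$ in $L^1_{\mathrm{loc}}(\Omega;\mathbb{R}^3)$ for every basis, so $\dive\Sigma_{\xi\eta}(\nabla u_n)\to\dive\Sigma_{\xi\eta}(\nabla u)$ in the sense of distributions. Fixing $A\ssubset\Omega$ and a Borel partition $\{A_j\}$ of $A$ with bases $\{\xi_j,\eta_j\}\in\mathcal{B}_+$, the inequality above (with $A_j$ in place of $A$, summed, and using $\overline{K}_n$ on the good side of \eqref{is} over the disjoint union) gives
\begin{equation*}
\sum_j \int_{A_j}\dive\Sigma_{\xi_j\eta_j}(\nabla u_n)\,d\ex \leq \mathcal{E}_{\e_n}(u_n) - 2\e_n\int_{\cup A_j}\overline{K}_n\,d\ex,
\end{equation*}
and since the distributional limits of the left side are the measures $\dive\Sigma_{\xi_j\eta_j}(\nabla u)$ — which a priori may not yet be known to be finite measures, so I would first test against suitable $\varphi\in C_c^\infty$ to get a uniform bound $|\dive\Sigma_{\xi_j\eta_j}(\nabla u)|(A_j)\le \liminf_n\mathcal{E}_{\e_n}(u_n) < \infty$, hence $u\in\ag$. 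Taking the supremum over all finite partitions and bases and invoking \eqref{openformula} of \Cref{repformula} yields $|Iu|(A)\le\liminf_n\mathcal{E}_{\e_n}(u_n)$ for every $A\ssubset\Omega$; letting $A\uparrow\Omega$ gives \eqref{liminflapl}. The fact that $u\in\ago$, i.e. $\dz u = \frac12|\nabla_\bot u|^2$ a.e., follows because $\mathcal{E}_{\e_n}(u_n)\le C$ forces $\frac1{\e_n}\int_\Omega(\dz u_n - \frac12|\nabla_\bot u_n|^2)^2 \le 2C$, so $\dz u_n - \frac12|\nabla_\bot u_n|^2 \to 0$ in $L^2(\Omega)$; combined with $\nabla_\bot u_n\to\nabla_\bot u$ in $L^3$ (hence $|\nabla_\bot u_n|^2\to|\nabla_\bot u|^2$ in $L^{3/2}$) and $\dz u_n\to\dz u$ in $L^{3/2}$, we identify the limit $\dz u = \frac12|\nabla_\bot u|^2$.

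The main obstacle I anticipate is the bookkeeping around the curvature term: making sure that the splitting \eqref{is} is applied on the right sets so that $\e_n\int 2\overline{K}_n$ always appears with the favorable sign, and that one never needs to estimate it (which would require the boundary control \eqref{kcontrol}, unavailable here since no uniform boundary bound is assumed in this theorem — only the intrinsic hypothesis \eqref{vancurv}). The resolution is that \eqref{is} holds on any open $A\ssubset\Omega$ with no boundary term, and $2(\dxy u_n)^2\ge 0$ means $(\dxx u_n - \dyy u_n)^2 + 4\overline{K}_n \le (\Delta_\bot u_n)^2$ pointwise after the AM--GM step, so $\e_n\int_A 2\overline{K}_n$ is genuinely on the controlled side for every such $A$ and every Borel sub-partition; \eqref{vancurv} is then only needed to conclude that the curvature contribution does not force a strict loss, i.e. that the bound is with $|Iu|(\Omega)$ and not $|Iu|(\Omega)$ minus a curvature defect. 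The remaining assertion, the explicit $BV$ formula for $|Iu|$, is immediate from \Cref{bvform} once $u\in\ago\cap W^{1,\infty}(\Omega)$ with $\nabla u\in BV(\Omega;\mathbb{R}^3)$, so no further argument is needed there.
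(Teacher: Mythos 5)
The overall architecture of your proposal is sound and mirrors the paper's strategy: localize on compactly contained open sets, pass to the limit using H\"older and the lower semicontinuity built into \Cref{repformula}, take a supremum over rotated bases/partitions, then exhaust $\Omega$, and finally extract $u\in\ago$ from the $L^2$ convergence of $\dz u_n - \tfrac12|\nabla_\bot u_n|^2$. But the treatment of the curvature term has a genuine gap, and it is exactly the point you flag as ``the main obstacle'' without actually resolving it. The inequality \eqref{is} on $A\ssubset\Omega$ reads
\begin{equation*}
\int_A \dive\Sigma_{\xi\eta}(\nabla u_n)\,d\ex + 2\e_n\int_A \overline{K}_n\,d\ex \;\le\; \mathcal{E}_{\e_n}(u_n),
\end{equation*}
so to deduce $\int_A\dive\Sigma_{\xi\eta}(\nabla u_n)\le\mathcal{E}_{\e_n}(u_n)+o(1)$ you need a \emph{lower bound} $2\e_n\int_A\overline{K}_n\ge -o(1)$. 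The curvature $\overline{K}_n$ has no definite sign, so it is not ``on the good side'': the pointwise fact $4(\dxy u_n)^2\ge 0$ only gives the one-sided estimate $(\dxx u_n-\dyy u_n)^2\le(\Delta_\bot u_n)^2-4\overline{K}_n$, which after integration yields an \emph{upper} bound for $\e_n\int_A\overline{K}_n$ by $\tfrac{\e_n}{4}\int_A(\Delta_\bot u_n)^2$, not the lower bound you need. Worse, the hypothesis \eqref{vancurv} controls $\e_n^2\int_\Omega\overline{K}_n$, not $\e_n\int_A\overline{K}_n$ for arbitrary $A\ssubset\Omega$; the curvature on a subdomain is not a boundary quantity for $A$ and can be of either sign, large, and completely decoupled from the global integral.

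The paper's resolution, which your argument is missing, is to never leave $\e_n\int_A\overline{K}_n$ as an untouched volume integral. Instead one tests with $\varphi\in C_c^\infty(\Omega;[0,1])$, $\varphi\equiv 1$ on $A$, and integrates by parts against $\varphi$ using $\overline{K}_n=\dive_\bot(\dx u_n\,\dyy u_n,\,-\dx u_n\,\dxy u_n)$, turning the curvature term into $-2\e_n\int_\Omega\dx u_n(\dyy u_n\,\dx\varphi-\dxy u_n\,\dy\varphi)$, which is bounded by $2\e_n\|\nabla\varphi\|_{L^\infty}\|\nabla_\bot u_n\|_{L^2}\|\nabla_\bot^2 u_n\|_{L^2}$. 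One then squares this and uses $\|\nabla_\bot^2 u_n\|_{L^2}^2 = \int_\Omega(\Delta_\bot u_n)^2 - 2\int_\Omega\overline{K}_n$, so that
\begin{equation*}
\e_n^2\|\nabla_\bot u_n\|_{L^2}^2\|\nabla_\bot^2 u_n\|_{L^2}^2 \le C\e_n^2\int_\Omega(\Delta_\bot u_n)^2 + C\e_n^2\Bigl|2\int_\Omega\overline{K}_n\Bigr| \to 0,
\end{equation*}
the first term because $\e_n^2\int(\Delta_\bot u_n)^2\le 2\e_n\mathcal{E}_{\e_n}(u_n)\to 0$, the second precisely by hypothesis \eqref{vancurv}. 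This is why the hypothesis carries $\e_n^2$ rather than $\e_n$, and it is the only place \eqref{vancurv} is used. Without this test-function integration by parts your chain of inequalities leaves an uncontrolled, possibly negative, term $2\e_n\int_A\overline{K}_n$, and the claimed bound $|Iu|(A)\le\liminf\mathcal{E}_{\e_n}(u_n)$ does not follow. The remaining pieces of your argument — distributional convergence of $\dive\Sigma_{\xi\eta}(\nabla u_n)$, the supremum over partitions via \Cref{repformula}, the exhaustion, and the identification $\dz u=\tfrac12|\nabla_\bot u|^2$ — are all correct and essentially the paper's.
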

\begin{proof}
{For each {\color{black}$u_n\in H^{2}(\Omega)$}, the representation \eqref{h2form} of $|I u_n|$ gives
\begin{align}
    \notag
|I u_n | &= \left|\dz u_n - \frac{1}{2}|\nabla_\bot u_n|^2 \right| | \lambda_{1,n} - \lambda_{2,n}|\mathcal{L}^3\mres \Omega\\ \label{jkcalc2}
&\leq \left[\frac{1}{2\e_n}\left(\dz u_n - \frac{|\nabla_\bot u_n|^2}{2} \right)^2 + \frac{\e_n}{2}\left(\lambda_{1,n} - \lambda_{2,n} \right)^2\right]\mathcal{L}^3\mres \Omega.
\end{align}
Next, fix an open set $A \ssubset \Omega$ and a test function $\varphi \in C^\infty_c(\Omega;[0,1])$ such that $\varphi=1$ on $A$. We estimate
\begin{align}\notag
    \frac{\e_n}{2} \int_{\Omega}(\Delta_\bot u_n)^2  \,d\mathbf{x}&\geq\frac{\e_{\color{black}n}}{2} \int_{\Omega} \left[\lambda_{1,n}^2 +\lambda_{2,n}^2 +2\lambda_{1,n}\lambda_{2,n}\right] \varphi \,d\mathbf{x}\\ \notag
&= \frac{\e_n}{2} \int_{\Omega} \left[(\lambda_{1,n}-\lambda_{2,n})^2 + 4\det(\nabla^2_\bot u_n)\right] \varphi \,d\mathbf{x}\\  \notag
&=\frac{\e_n}{2} \int_{\Omega} (\lambda_{1,n}-\lambda_{2,n})^2  \varphi \,d\mathbf{x}-2\e_n\int_\Omega \dx u_n( \dyy u_n \dx \varphi - \dxy u_n \dy \varphi)\,d\mathbf{x}\\ \label{nulllag}
&\geq \frac{\e_n}{2} \int_{A} \left(\lambda_{1,n} - \lambda_{2,n} \right)^2   \,d\mathbf{x}-2\e_n\|\nabla\varphi \|_{L^\infty}\|{\nabla_\bot} u_n \|_{L^2}{\|\nabla_\bot^2 u_n \|_{L^2}}.
\end{align}
{Now since $\nabla_\bot u_n$ are bounded in $L^3$, the square of the remainder in \eqref{nulllag} can be estimated by
\begin{align}\notag
\e_n^2\|\nabla_\bot u_n \|_{L^2}^2\| \nabla_\bot^2 u_n \|_{L^2}^2 &=  \e_n^2\|\nabla_\bot u_n \|_{L^2}^2\int_\Omega \left[\lambda_{1,n}^2 + \lambda_{2,n}^2 + 2\lambda_{1,n} \lambda_{2,n} - 2 \lambda_{1,n} \lambda_{2,n}\right] \, d\ex \\ \notag
&\leq C\e_n^2\int_\Omega (\Delta_\bot u_n)^2 \,d\ex + C\e_n^2\left| 2\int_\Omega \overline{K}_n \,d\ex \right|\\ \label{tildeebound}
&\to 0.
\end{align}
}Combining  \eqref{jkcalc2}-\eqref{tildeebound}, we conclude that
{\color{black}\begin{align}\notag
    \mathcal{E}_{\e_n}(u_n) &\geq \frac{1}{2}\int_{A}\left[ 
\frac{1}{\varepsilon_n }\left( \partial _{z}u_n-\frac{1}{2}\left\vert \nabla
_{\bot }u_n\right\vert ^{2}\right) ^{2}+\varepsilon_n \left(\lambda_{1,n} - \lambda_{2,n} \right)^2\right] d\mathbf{x}- {2\e_n\|\nabla \varphi \|_{L^\infty}\|\nabla_\bot u_n\|_{L^2} \|\nabla_\bot^2 u_n\|_{L^2}} \\ \label{bpsag}
&\geq|Iu_n|(A) - {\mathrm{o}(1)}.
\end{align}
}Since the limit inferior of the energies is finite, we can appeal to \Cref{repformula}.(iii) to find that $u\in AG^{3D}(A)$ and
\begin{equation*}
    \liminf_{n\to \infty}\mathcal{E}_{\e_n}(u_n) \geq \liminf_{n\to \infty}|I u_n|(A) \geq |Iu|(A).
\end{equation*}
An exhaustion argument gives $u\in \ag$ and \eqref{liminflapl}. The fact that $u\in \ago$ follows from
\begin{equation}
    \int_\Omega \left( \dz u_n - \frac{|\nabla_\bot u_n|^2}{2}\right)^2 d\ex \leq \e_n \mathcal{E}_{\e_n}(u_n) \to 0.
\end{equation}}
\end{proof}

\section{The Upper Bound}\label{sec:upbd}
In this section we show that the lower bound \Cref{3dlbd} is sharp when $u\in \ago$ and $\nabla u \in (BV\cap L^\infty)(\Omega)$ by means of a construction, so that we have matching upper and lower bounds. Combined with the lower bound, this allows us to conclude that under reasonable assumptions, equipartition of energy in \eqref{3dnonlinear} is optimal.

\begin{theorem}\label{3dupbd}
Let $u\in \ago \cap W^{1,\infty}(\Omega)$ and $\nabla u \in BV(\Omega;\mathbb{R}^3)$. Then there exists a sequence $\{u_\e \}\subset C^2(\Omega)$ such that 
\begin{equation}\label{w1p}
    u_\e \to u \textit{ in $W^{1,p}(\Omega)$ for all $1\leq p < \infty$}
\end{equation}
and
\begin{equation}\label{3dc}
  { \mathcal{E}_\e(u_\e)} \to \int_{J_{\nabla u}} \frac{|\nabla_\bot u^+ - \nabla_\bot u^-|^4}{12|\nabla u^+ - \nabla u^-|}\, d\mathcal{H}^2.
\end{equation}
\end{theorem}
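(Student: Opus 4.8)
The plan is to first reduce to the model case in which $\Omega$ is a cube and the jump set $J_{\nabla u}$ is a single flat interface, and then to glue local constructions together and invoke a general result of Poliakovsky \cite{Pol08} to pass to an arbitrary $u$ with $\nabla u \in (BV\cap L^\infty)(\Omega;\mathbb{R}^3)$. In the model case, suppose $J_{\nabla u}$ is the hyperplane $\{\ex\cdot\nu = 0\}$ with constant traces $\nabla u^{\pm}$ on either side. Since $u\in\ago$, we have $\dz u^{\pm} = \tfrac12|\nabla_\bot u^{\pm}|^2$ and, by \eqref{jumpcond}, $\nabla u^+ - \nabla u^- \parallelsum \nu_{\nabla u}$; moreover $(\nu_{\nabla u})_\bot\neq 0$ since the $z$-components of $\nabla u^{\pm}$ are determined by the $\bot$-components. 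Choosing coordinates so that $\nu_{\nabla u}$ lies in a vertical plane and rotating $\{\xi,\eta\}$ in the $xy$-plane so that $(\nabla u^+ - \nabla u^-)_\bot \parallelsum \xi$, the interface becomes a graph over an affine function transverse to a single variable $s = \ex\cdot\nu$, and we make the one-dimensional ansatz
\begin{equation}\label{plan-ansatz}
u_\e(\ex) = \ell(\ex) + \e\, g\!\left(\tfrac{s}{\e}\right),
\end{equation}
where $\ell$ interpolates the affine parts of $u$ and $g$ is a fixed profile, chosen exactly so that the reduced one-dimensional energy realizes \eqref{appeq}, i.e.\ the ansatz solves (an ODE reduction of) the BPS equation $\dz u - \tfrac12|\nabla_\bot u|^2 = \pm\e\,\Delta_\bot u$ in the transition layer. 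With this choice the compression and bending densities are pointwise equal in the layer, and the integral of the resulting density over the transition reduces by \eqref{divesigma} and the divergence theorem to the boundary/jump term $\tfrac{|\nabla_\bot u^+-\nabla_\bot u^-|^4}{12|\nabla u^+-\nabla u^-|}$ per unit area of $J_{\nabla u}$; one checks directly that the error terms (from $(\dxy u_\e)^2$, i.e.\ $\overline{K}_\e$, and from the curvature of the graph when the interface is not exactly flat) are $o(1)$ as $\e\to 0$.

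The main steps, in order, are: (1) verify the algebraic identities for $\nabla u^\pm$ coming from $u\in\ago$ and the jump condition, and set up the rotated coordinates adapted to $J_{\nabla u}$; (2) in the flat model case, write down the profile $g$ explicitly (it is the same profile as in the 2D construction \cite{NY20}, rescaled), verify \eqref{w1p} and that $\mathcal{E}_\e(u_\e)$ converges to the stated density times $\mathcal{H}^2(J_{\nabla u})$, being careful that the lower bound from \Cref{3dlbd} forces this to be the exact limit (so it suffices to prove $\limsup \le$); (3) localize: cover $J_{\nabla u}$ (which is $\mathcal H^2$-rectifiable, cf.\ \Cref{jumpdef}) by small balls in which $\nabla u$ is close to a two-state jump, run the model construction on each, and patch the pieces together with cutoffs, controlling the overlap errors by the smallness of the balls; (4) remove the structural assumptions via a diagonal/density argument in the spirit of Poliakovsky \cite{Pol08}, which upgrades the construction for ``piecewise-affine-type'' $u$ with polyhedral jump set to general $u$ with $\nabla u\in(BV\cap L^\infty)(\Omega;\mathbb R^3)$, using lower semicontinuity of $|Iu|$ (\Cref{repformula}.(iii)) and continuity of the right-hand side of \eqref{3dc} under the relevant approximation.

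The hard part is step (3)–(4): the patching near the jump set and the reduction to general $BV$ gradients. Unlike the 2D case, the transition layer here is a two-dimensional surface that may be curved and whose normal $\nu_{\nabla u}$ varies, so the one-dimensional ansatz \eqref{plan-ansatz} must be implemented in curvilinear coordinates adapted to $J_{\nabla u}$, and one must check that the extra terms generated by the curvature of $J_{\nabla u}$ and by the cutoffs used to glue neighboring patches do not contribute in the limit — in particular that the approximate Gaussian curvature term $\e^2\int\overline K_\e$ stays $o(1)$, as in \eqref{vancurv}, rather than merely bounded. The key structural point that makes this work is precisely the one emphasized after \eqref{is}: because our calibration $\Sigma$ depends only on $\nabla u$ (not on second derivatives), the error is governed by $\e^2\int \overline K_\e$, which is controlled by boundary data alone via \eqref{kcontrol}, so it can be made to vanish by choosing the ansatz to have the correct (essentially flat-layer) behavior away from $J_{\nabla u}$. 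Invoking \cite{Pol08} then packages the remaining technical approximation, so that the construction for the model problem propagates to the general statement.
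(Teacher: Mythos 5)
Your core ingredients match the paper's: a 1D BPS ansatz transverse to a flat interface, optimality of that profile (\Cref{cubeprop}), and an appeal to Poliakovsky \cite{Pol08} for general $u$ with $\nabla u\in (BV\cap L^\infty)(\Omega;\mathbb R^3)$. But your steps (3)--(4) misread the division of labor. You propose manually covering the rectifiable set $J_{\nabla u}$ by small balls, running the model construction on each, patching with cutoffs, controlling overlap errors, and then a separate diagonal/density argument ``in the spirit of Poliakovsky'', and you call this the hard part. The paper does none of that. It invokes \cite[Theorem~1.2]{Pol08} (restated as \Cref{polthm}) as a black box: that theorem already applies to any $u\in W^{1,\infty}(\Omega)$ with $\nabla u\in BV(\Omega;\mathbb R^3)$, produces the recovery sequence in $C^2$ with the $W^{1,p}$ convergence \eqref{w1p}, and reduces the limit energy to the pointwise 1D infimum $\inf_{r}\int_{-\infty}^{\infty}F\left(-r'(t)\,\nu\otimes\nu,\,r(t)\nu+\nabla u^-\right)dt$ over transition profiles $r$. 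Once you set $F(a,b)=\frac{1}{2}\left(b_3-\frac{|b_\bot|^2}{2}\right)^2+\frac{1}{2}\sum_{i=1}^2 a_{ii}^2$, so that $\frac{1}{\e}\int_\Omega F(\e\nabla^2 u_\e,\nabla u_\e)\,d\ex=\mathcal{E}_\e(u_\e)$, that infimum is computed by \Cref{cubeprop} after rescaling, and the proof is complete. Re-deriving the localization would be a substantial and unnecessary detour, and it is exactly the part your plan leaves vaguest.

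A second, smaller point: your concern that $\e^2\int\overline K_\e$ must stay $o(1)$ belongs to the lower bound, not the upper bound construction. In the cube model of \Cref{cubeprop}, the constant-gradient boundary conditions on the $\nu$-faces together with periodicity in the $\zeta_1,\zeta_2$ directions force $\int_C\overline K=0$ identically --- this is \eqref{detid} --- so no such error term ever arises; and Poliakovsky's theorem involves only the density $F$ and the 1D profile, so $\overline K$ never appears there either. Your observation in step (2) that the matching lower bound of \Cref{3dlbd} pins down the exact limit is correct, but the argument is already self-contained at the cube level: \eqref{3dcubeq} gives the two-sided estimate $\frac{|m_\bot^+-m_\bot^-|^4}{12|m^+-m^-|}\le r_\e\le r_\e^{1D}\le \frac{|m_\bot^+-m_\bot^-|^4}{12|m^+-m^-|}+c_1e^{-c_2\e}$ directly.
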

The proof of \Cref{3dupbd} consists of two steps. In \Cref{cubeprop}, we show that on a cube with jump set parallel to one of the faces, the sequence of one-dimensional competitors with constant gradient in the direction parallel to the jump set is asymptotically minimizing. Second, the cube construction can be leveraged to obtain the full upper bound \Cref{3dupbd} by using the results of \cite{Pol13}.\par
To formulate the problem on a cube, let us fix an orthonormal basis ${\color{black}\{\zeta_1,\zeta_2,\nu \}}$ of $\R^3$ and the set
\begin{equation*}
    C= \{\ex \in \R^3 : |\ex \cdot \nu| \leq \sfrac{1}{2}, |\ex \cdot \zeta_i|\leq \sfrac{1}{2} \textup{ for }i=1,2 \}.
\end{equation*}
Next, we choose boundary data that will be compatible with a limiting jump set $\{\ex\in C :\ex \cdot \nu=0 \}$. Let $m^+\neq m^-$ be such that
\begin{equation}\label{compat}
    m_3^\pm = \frac{1}{2}|m_\bot^\pm|^2\quad \textup{ and }\quad\nu\parallelsum (m^+ - m^-),
\end{equation}
and consider the class 
\begin{align*}\notag
    \mathcal{A}_C := \{u \in H^2: \nabla u = m^\pm \textup{ when }\ex &\cdot \nu = \pm \sfrac{1}{2}\textup{ and $\nabla u$ is 1-periodic in the $\zeta_1, \zeta_2$ directions}\}.
\end{align*}
Note that since $m^+ \neq m^-$, the first equation in \eqref{compat} enforces
\begin{align}\notag
 \nu_\bot \neq 0,
\end{align}
so that we can define the planar unit vectors
\begin{align}\notag
\xi = \frac{\nu_\bot}{|\nu_\bot|}\quad\textup{and}\quad \eta = (-\xi_2, \xi_1, 0).
\end{align}
The smaller set of 1D competitors is defined by
\begin{align*}
    \mathcal{A}_C^{1D} := \{u\in \mathcal{A}_C: \nabla u \cdot \zeta_i = m^+ \cdot \zeta_i = m^- \cdot \zeta_i \textup{ for }i=1,2 \}.
\end{align*}
We remark that due to the boundary conditions imposed on the class $\mathcal{A}_C$ and the identity $\overline{K} =\det \nabla_\bot^2 u =\nabla_\bot (\dx u \dyy u, - \dx u \dxy u)$, 
{\color{black}
\begingroup
\allowdisplaybreaks
\begin{align}\notag
    \int_C (\Delta_\bot u)^2 \, d\ex &- \int_C |\nabla_\bot^2 u| \, d\ex \\ \notag
    &= 2\int_C    \dxx u \dyy u - (\dxy u)^2 \, d\ex \\ \notag
    &= 2\int_{\partial C \cap \{|\ex\cdot \nu| = \sfrac{1}{2}\}}   \dx u \partial_{\tau_{1,2}} (\dy u) \,d\mathcal{H}^2 + 2\sum_{i=1}^2 \int_{\partial C \cap \{|\ex\cdot \zeta_i|=\sfrac{1}{2} \}}\dx u \partial_{\tau_{1,2}} (\dy u) \,d\mathcal{H}^2 \\ \label{detid}
    &= 0 .
\end{align}
\endgroup}
We set
\begin{align}\notag
    r_\e = \inf_{\mathcal{A}_C} \mathcal{E}_\e\quad\textup{and}\quad  r_\e^{1D}= \inf_{\mathcal{A}_C^{1D}} \mathcal{E}_\e.
\end{align}
\begin{proposition}\label{cubeprop}
For any $\e>0$,
\begin{equation}\label{3dcubeq}
\frac{|m_\bot^+-m_\bot^-|^4}{12|m^+ - m^-|} \leq r_\e \leq r_\e^{1D} \leq \frac{|m_\bot^+-m_\bot^-|^4}{12|m^+ - m^-|}+c_1 e^{-c_2\e}.
\end{equation}
The constants $c_1$ and $c_2$ depend only on $m^+$ and $m^-$.
\end{proposition}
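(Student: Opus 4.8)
The plan is to establish the three inequalities of \eqref{3dcubeq} separately. The middle one, $r_\e \le r_\e^{1D}$, is immediate from $\mathcal{A}_C^{1D} \subset \mathcal{A}_C$. The left inequality will be a calibration estimate in the spirit of the heuristic computation \eqref{dircalc}--\eqref{is}, and the right one will come from an explicit one-dimensional competitor built from the optimal transition profile.

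\emph{Lower bound.} I would use the frame $\xi = \nu_\bot/|\nu_\bot|$, $\eta = (-\xi_2,\xi_1,0)$ fixed just before \eqref{detid} (legitimate since \eqref{compat} forces $\nu_\bot \neq 0$); because $\nu \parallelsum (m^+ - m^-)$, the planar vector $(m^+-m^-)_\bot$ is parallel to $\xi$, so $m_\eta^+ = m_\eta^-$. For $u \in \mathcal{A}_C \subset H^2(C) \subset W^{2,\sfrac{9}{5}}(C)$, the identity \eqref{admcalc}, the arithmetic--geometric mean inequality, the pointwise bound $(\Delta_\bot u)^2 - (\dZZ u - \dEE u)^2 = 4\overline{K} + 4(\partial_\xi \partial_\eta u)^2 \ge 4\overline{K}$, and the vanishing $\int_C \overline{K}\,d\ex = 0$ from \eqref{detid} combine to give
\begin{equation*}
\mathcal{E}_\e(u) \ge \frac{1}{2}\int_C\Big[\tfrac{1}{\e}\big(\dz u - \tfrac{1}{2}|\nabla_\bot u|^2\big)^2 + \e(\dZZ u - \dEE u)^2\Big]\,d\ex \ge \int_C |\dive\Sig(\nabla u)|\,d\ex \ge \Big|\int_C \dive\Sig(\nabla u)\,d\ex\Big|.
\end{equation*}
Since $\nabla u \in H^1(C) \hookrightarrow L^6(C)$, the cubic field $\Sig(\nabla u)$ lies in $W^{1,\sfrac{6}{5}}(C)$ with boundary trace in $L^{\sfrac{4}{3}}(\partial C)$ (using $H^{\sfrac{1}{2}}(\partial C) \hookrightarrow L^4(\partial C)$), so the Gauss--Green formula applies: the four lateral faces of $C$ cancel in pairs by the $\zeta_1$- and $\zeta_2$-periodicity of $\nabla u$, while $\nabla u \equiv m^\pm$ on the unit-area faces $\{\ex \cdot \nu = \pm\sfrac{1}{2}\}$, so $\int_C \dive\Sig(\nabla u)\,d\ex = (\Sig(m^+) - \Sig(m^-)) \cdot \nu$. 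As $m^\pm$ obey \eqref{compat}, the computation \eqref{jumpcost} from the proof of \Cref{bvform} applies verbatim with $\nu_m$ replaced by $\nu$; combined with $m_\eta^+ = m_\eta^-$ it gives $|(\Sig(m^+) - \Sig(m^-)) \cdot \nu| = |m_\xi^+ - m_\xi^-|^4/(12|m^+-m^-|) = |m_\bot^+ - m_\bot^-|^4/(12|m^+-m^-|)$. Hence $r_\e \ge |m_\bot^+ - m_\bot^-|^4/(12|m^+-m^-|)$ for every $\e > 0$; note that it is the \emph{exact} identity $\int_C \overline{K} = 0$ for the class $\mathcal{A}_C$, rather than an approximate one, that makes this bound $\e$-independent.

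\emph{Upper bound.} I would reduce $\mathcal{A}_C^{1D}$ to a scalar problem. Write $\ex = s_1\zeta_1 + s_2\zeta_2 + t\nu$, $a_i := m^+ \cdot \zeta_i = m^- \cdot \zeta_i$, $b^\pm := m^\pm \cdot \nu$; since $m^+ \neq m^-$ and $m^+ - m^- \parallelsum \nu$ we have $b^+ \neq b^-$, say $b^- < b^+$. Each $u \in \mathcal{A}_C^{1D}$ has $\partial_{\zeta_i} u \equiv a_i$, hence $\nabla u = a_1\zeta_1 + a_2\zeta_2 + w(t)\nu$ for some $w \in H^1([-1/2,1/2])$ with $w(\pm 1/2) = b^\pm$, and conversely each such $w$ yields an admissible $u(\ex) = a_1 s_1 + a_2 s_2 + \int_{-1/2}^{t} w(\tau)\,d\tau$. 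A short computation gives $\dz u - \tfrac{1}{2}|\nabla_\bot u|^2 = \tfrac{1}{2}|\nu_\bot|^2(w - b^-)(b^+ - w)$ --- the quadratic in $w$ vanishing at $b^\pm$ with leading coefficient $-\tfrac{1}{2}|\nu_\bot|^2$ --- and $\Delta_\bot u = |\nu_\bot|^2 w'$, so that
\begin{equation*}
\mathcal{E}_\e(u) = \frac{|\nu_\bot|^4}{2}\int_{-1/2}^{1/2}\Big[\tfrac{1}{4\e}(w - b^-)^2(b^+ - w)^2 + \e(w')^2\Big]\,dt,
\end{equation*}
a one-dimensional Modica--Mortola functional. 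I would take $w_\e$ to be the solution of the optimality ODE $w' = \tfrac{1}{2\e}(w - b^-)(b^+ - w)$ centered at $t = 0$, truncated and smoothly interpolated to the exact endpoint values $b^\pm$ on $[1/4,1/2]$ and $[-1/2,-1/4]$. Since solutions of this ODE approach $b^\pm$ at the rate $e^{-(b^+-b^-)|t|/(2\e)}$, both $|w_\e - b^\pm|$ and $|w_\e'|$ are $O(e^{-c/\e})$ on the truncation intervals, so the interpolation costs only $O(e^{-c/\e})$ in energy; on $[-1/4,1/4]$ equipartition holds exactly, the integrand equals $(w_\e - b^-)(b^+ - w_\e)w_\e'$, and the substitution $s = w_\e(t)$ gives $\int_{w_\e(-1/4)}^{w_\e(1/4)}(s - b^-)(b^+ - s)\,ds = \tfrac{1}{6}(b^+ - b^-)^3 + O(e^{-c/\e})$. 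Using $|m_\bot^+ - m_\bot^-| = (b^+ - b^-)|\nu_\bot|$ and $|m^+ - m^-| = b^+ - b^-$, the resulting $C^2$ competitor $u_\e \in \mathcal{A}_C^{1D}$ satisfies
\begin{equation*}
r_\e^{1D} \le \mathcal{E}_\e(u_\e) = \frac{|\nu_\bot|^4(b^+ - b^-)^3}{12} + c_1 e^{-c_2/\e} = \frac{|m_\bot^+ - m_\bot^-|^4}{12|m^+ - m^-|} + c_1 e^{-c_2/\e},
\end{equation*}
with $c_1, c_2$ depending only on $m^\pm$, which is the right-hand inequality in \eqref{3dcubeq}.

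\emph{Main obstacle.} The argument is mostly bookkeeping once one spots the reduction of $\mathcal{A}_C^{1D}$ to the scalar functional above. The two points that need genuine care are (i) justifying the Gauss--Green step for $\Sig(\nabla u)$ at merely $H^2$ regularity, handled by the trace embedding $H^{\sfrac{1}{2}}(\partial C) \hookrightarrow L^4(\partial C)$, and (ii) matching the one-dimensional profile \emph{exactly} to the boundary data $m^\pm$ without spoiling the exponential smallness of the error --- arranged by truncating the ODE solution deep inside its transition layer, where it is already $e^{-c/\e}$-close to $b^\pm$. I expect (ii) to be the only genuinely delicate step; everything else follows the pattern of \Cref{bvform} and of the heuristic estimate \eqref{dircalc}--\eqref{is}.
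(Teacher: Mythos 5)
Your proof is correct and follows essentially the same approach as the paper: the lower bound is the same calibration estimate exploiting $\int_C \overline{K}\,d\ex = 0$ from \eqref{detid} (you pass from $\mathcal{E}_\e$ downward, the paper from $\left|\int_C\dive\Sig\right|$ upward, but the chain of inequalities is the same), and the upper bound uses the same one-dimensional equipartition profile. Your explicit reduction of $\mathcal{A}_C^{1D}$ to a scalar Modica--Mortola functional in $w = \nabla u\cdot\nu$ is a transparent repackaging of the paper's vector-valued profile $\nabla w_\e = g(\ex\cdot\nu/\e)\,p + m^-$ and IVP \eqref{ivp} --- your $w$ and the paper's $g$ are related by $w = (b^+-b^-)g + b^-$ and the two ODEs coincide after rescaling --- whereas the paper keeps the vector formulation and defers the truncation estimates to \cite[Proposition 5.2]{NY20}.
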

\begin{proof}
The inequality $r_\e \leq r_\e^{1D}$ is immediate, since $\mathcal{A}_C^{1D} \subset \mathcal{A}_C$. Also, the inequality 
\begin{equation*}
   \frac{|m_\bot^+-m_\bot^-|^4}{12|m^+ - m^-|} \leq r_\e 
\end{equation*}
follows from \eqref{jkcalc}, \eqref{jumpcost}, and the boundary conditions for $u\in\mathcal{A}_C$. Indeed, since $$\nu_\bot\parallelsum(m^+-m^-)_{\color{black}\bot} \parallelsum \xi,$$
we have
\begingroup
\allowdisplaybreaks
\begin{align} \notag
  \frac{|m_\bot^+-m_\bot^-|^4}{12|m^+ - m^-|} &\underset{\eqref{jumpcost}}{=}\bigg| \int_{C\cap\{\ex \cdot \nu = \sfrac{1}{2}\}}\Sigma_{\xi\eta}(m^+) \cdot \nu\,d\mathcal{H}^2 - \int_{C\cap\{\ex \cdot \nu = -\sfrac{1}{2}\}} \Sigma_{\xi\eta}(m^-) \cdot \nu\,d\mathcal{H}^2\bigg| \\ \notag
   &\hspace{.23cm}=\left|\int_C\dive \Sigma_{\xi\eta}( \nabla u) \,d\ex\right|\\ \notag
    &\underset{\eqref{jkcalc}}{\leq} \int_C \frac{1}{2\e}\left(\dz u - \frac{|\nabla_\bot u|^2}{2} \right)^2 + \frac{\e}{2}\left(\dZZ u - \dEE u\right)^2\,d\ex \\ \notag
    & \hspace{.23cm}\leq \int_C \frac{1}{2\e}\left(\dz u - \frac{|\nabla_\bot u|^2}{2} \right)^2 + \frac{\e}{2}\left((\dZZ u)^2 +2\left(\dEZ u \right)^2 +(\dEE u)^2\right)d\ex \\ \notag
    &\qquad- \e\int_C \left(\dZZ u \dEE u - (\dEZ u)^2 \right)\,d\ex \\ \notag
    &\hspace{.35cm}{= \frac{1}{2}\int_C \frac{1}{\e}\left(\dz u - \frac{|\nabla_\bot u|^2}{2} \right)^2 + \e |\nabla_\bot^2 u|^2 \,d\ex -\e \int_{C}\det (\nabla^2_\bot u) \,d\ex}\\ \notag
    &\hspace{.08cm}\underset{\eqref{detid}}{=} \mathcal{E}_\e(u).
\end{align}
\endgroup
Finally, showing that
\begin{equation}\label{est}
    r_\e^{1D} \leq \frac{|m_\bot^+-m_\bot^-|^4}{12|m^+ - m^-|}+c_1 e^{-c_2\e}
\end{equation}
entails constructing a sequence $\{ \nabla u_\e \}$ such that each $\nabla u_\e$ is a function of ${\mathbf{x}}\cdot \nu$ and
\begin{equation}\notag
    \mathcal{E}_\e(u_\e) \leq \frac{|m_\bot^+-m_\bot^-|^4}{12|m^+ - m^-|}+c_1 e^{-c_2\e}.
\end{equation}
Since the steps of such a construction are standard in the calculus of variations, we outline the procedure and refer to \cite[Proposition 5.2]{NY20}, which contains a full proof in the 2D case, for some of the estimates. \par
Let 
$$
p= m^+ - m^-
$$
and $g$ be the solution to the initial value problem
\begin{equation}\label{ivp}
\begin{cases} g'(t) = \displaystyle\frac{|g p_3 + \mm_3 -(gp_1 + \mm_1 )^2/2-(gp_2 + \mm_2 )^2/2|}{|p_\bot \cdot \nu_\bot|}, \\
{\color{black}g(0)}=\sfrac{1}{2}.\end{cases}
\end{equation}
Note that the denominator $|p_\bot \cdot \nu_\bot|\neq 0$ since $m_3^\pm = \frac{1}{2}|m_\bot^\pm|^2$, $m^+ \neq m^-$ imply that $m_\bot^+\neq m_\bot^-$. One can check that $g$ exists for all time and approaches $1$ and $0$ exponentially as $t\to \pm \infty$ (see for example \cite[Equation (1.21)]{Ste88}). Consider the family of functions
\begin{equation}\notag
    g \left(\frac{\ex \cdot \nu}{\e} \right)p+ \mm = \nabla \left[\e|p| G\left(\frac{\ex \cdot \nu}{\e} \right)+ m^-\cdot \ex\right] =: \nabla w_\e(\ex),
\end{equation}
where $G$ is an antiderivative of $g$, on the infinite strip $\{|\ex\cdot \tau_i|\leq \sfrac{1}{2}:i=1,2 \}$. Let $\xi$ be the unit vector $\nu_\bot/|\nu_\bot|$. By direct calculation, we have for any $\e>0$
\begingroup
\allowdisplaybreaks
\begin{align}\notag
    \mathcal{E}&_\e (\nabla w_\e) \\ \notag
&= \int_{\{|\ex\cdot \tau_i|\leq \sfrac{1}{2}:i=1,2 \}} \frac{1}{\e} \left(\dz w_\e - \frac{1}{2}|\nabla_\bot w_\e|^2 \right)^2 +\e \left(\Delta_\bot w_\e \right)^2\,d\ex \\ \notag 
&= \int_{-\infty}^\infty \left[\frac{1}{\e}\left(g\left(\frac{t}{\e} \right)p_3+ \mm_3  - \frac{\left(g\left(\frac{t}{\e} \right)p_1+\mm_1\right)^2}{2}-\frac{\left(g\left(\frac{t}{\e} \right)p_2+\mm_2\right)^2}{2}\right)^2+\e g'\left(\frac{t}{\e}\right)^2\frac{(p_\bot \cdot \nu_\bot)^2}{\e^2} \right] \,dt \\ \notag
&\hspace{-.15cm}\underset{\eqref{ivp}}{=} \left|\int_{-\infty}^\infty \left(g\left(\frac{t}{\e} \right)p_3+ \mm_3  - \frac{\left(g\left(\frac{t}{\e} \right)p_1+\mm_1\right)^2}{2}-\frac{\left(g\left(\frac{t}{\e} \right)p_2+\mm_2\right)^2}{2}\right)g'\left(\frac{t}{\e}\right)\frac{(p_\bot \cdot \nu_\bot)}{\e}\,dt\right|\\ \notag
&= \left|\int_{\{|\ex\cdot \tau_i|\leq \sfrac{1}{2}:i=1,2 \}} \left(\dz w_\e - \frac{|\nabla_\bot w_\e|^2}{2} \right)\dZZ w_\e \,d\ex \right|\\ \notag
&= \left|\int_{\{|\ex\cdot \tau_i|\leq \sfrac{1}{2}:i=1,2 \}} \left(\dz w_\e - \frac{|\nabla_\bot w_\e|^2}{2} \right)(\dZZ w_\e - \dEE w_\e) \,d\ex \right|\\ \notag
&\hspace{-.22cm}\underset{\eqref{jkcalc}}{=} \left|\int_{\{|\ex\cdot \tau_i|\leq \sfrac{1}{2}:i=1,2 \}} \dive \Sig(\nabla w_\e) \,d\ex \right|\\ \notag
&= |(\Sig(m^+) - \Sig(m^-))\cdot \nu |
\end{align}
\endgroup
In \eqref{jumpdens} in the lower bound, we saw that this was equal to
\begin{equation}\label{space}
   \frac{|m_\bot^+-m_\bot^-|^4}{12|m^+ - m^-|}. 
\end{equation}
The sequence $\{\nabla u_\e \}$ is constructed by suitably truncating $\nabla w_\e$ so that it is in the class $\mathcal{A}_C^{1D}$. The estimate \eqref{est} follows from the exponential approach of $g$ to $0$ and $1$ combined with \eqref{space}; see \cite[Proposition 5.2]{NY20} for full details.
\end{proof}
To prove \Cref{3dupbd}, we appeal to a general theorem from \cite{Pol08}. The version applicable to this problem reads as follows.
\begin{theorem}\label{polthm}
(\cite[Theorem 1.2]{Pol08})
Let $\Omega\subset \mathbb{R}^3$ be a bounded $C^2$-domain and let 
$$
F(a,b) : \mathbb{R}^{3\times 3} \times \mathbb{R}^3  \to \mathbb{R}
$$
be a $C^1$ function satisfying $F \geq 0$. Let $u \in W^{1,{\infty}}(\Omega)$ be such that $\nabla u \in BV(\Omega;\mathbb{R}^3)$ and $F(0,\nabla u(x))=0$ a.e. in $\Omega$. Then there exists a family of functions $\{ u_\e \} \subset C^2(\mathbb{R}^3)$ satisfying
\begin{equation}\notag
u_\e \to u \textit{ in }W^{1,{p}}(\Omega) \textit{ for }1\leq p< \infty
\end{equation}
and
\begin{align}\notag
&\lim_{\e \to 0} \frac{1}{\e} \int_\Omega F(\e \nabla^2 u_\e, \nabla u_\e)\,dx \,dz \\ \notag
&=\int_{J_{\nabla u}} \inf_{r \in \mathcal{R}_{\chi(x,z),0}} \left\{ \int_{-\infty}^\infty F\left(-r'(t) \nu(x,z) \otimes \nu(x,z), r(t)\nu(x,z)+\nabla u^-(x,z)\right) \,dt\right\}d\mathcal{H}^2.
\end{align}
Here $\chi(x,z)$ is given by
\begin{equation}\notag
\chi(x,z) \nu(x,z) = \nabla u^+(x,z)-\nabla u^-(x,z),
\end{equation}
and
$$
\mathcal{R}_{\chi(x,z),0}:=\{r(t) \in C^1(\mathbb{R}): \exists L>0 \textit{ s.t. } r(t)= \chi(x,z)  \textit{ for }t\leq -L, r(t) = 0\textit{ for }t\geq L\}
.$$
\end{theorem}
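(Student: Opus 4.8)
This is a $\Gamma$-limit--type statement, and the natural route is to combine a localization/blow-up argument near $J_{\nabla u}$ (giving a $\liminf$ bound valid for \emph{every} competitor) with an explicit one-dimensional recovery sequence (giving the matching $\limsup$), exactly in the spirit of the reduction already carried out for the specific functional $\mathcal{E}_\e$ in \Cref{cubeprop}. Write $K(x):=\inf_{r\in\mathcal{R}_{\chi(x),0}}\int_{\mathbb{R}}F(-r'(t)\nu(x)\otimes\nu(x),\,r(t)\nu(x)+\nabla u^-(x))\,dt$ for the integrand on the right-hand side. I would prove (i) $\liminf_{\e\to0}\tfrac1\e\int_\Omega F(\e\nabla^2 v_\e,\nabla v_\e)\,d\ex\ge\int_{J_{\nabla u}}K\,d\mathcal{H}^2$ for every $v_\e\to u$ in $W^{1,1}(\Omega)$, and (ii) the existence of $\{u_\e\}\subset C^2(\mathbb{R}^3)$ with $u_\e\to u$ in every $W^{1,p}(\Omega)$ and $\limsup_{\e\to0}\tfrac1\e\int_\Omega F(\e\nabla^2 u_\e,\nabla u_\e)\,d\ex\le\int_{J_{\nabla u}}K\,d\mathcal{H}^2$. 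Together (i) applied to the family from (ii) pins down the limit.

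\textbf{Lower bound.} Since $F\ge0$, the localized energy $A\mapsto\tfrac1\e\int_A F(\e\nabla^2 v_\e,\nabla v_\e)\,d\ex$ is a superadditive set function, so by rectifiability of $J_{\nabla u}$ it suffices to bound from below, for $\mathcal{H}^2$-a.e. $x_0\in J_{\nabla u}$, the energy carried by small cylinders $Q_r(x_0)$ with axis $\nu(x_0)$. Rescaling by $r$, $u$ blows up to the pure-jump profile $y\mapsto \nabla u^-(x_0)\cdot y+\mathrm{const}+(\text{a ramp in direction }\nu(x_0))$, while $v_\e$ stays close; a one-dimensional comparison (slicing by lines parallel to $\nu(x_0)$ and using that any transition of the restricted gradient between the traces $\nabla u^\mp(x_0)$ extends, after composition with constant states, to a profile in $\mathcal{R}_{\chi(x_0),0}$) bounds the energy on each slice below by $K(x_0)$. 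Integrating in the tangential variables, applying Fatou, and summing cylinder contributions via a De Giorgi--Letta argument yields (i). This is standard $\Gamma$-$\liminf$ technology, and I expect no real difficulty here.

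\textbf{Recovery sequence.} Following \Cref{cubeprop}, I would first treat the flat model case: constants $m^\pm$ with $F(0,m^\pm)=0$ and $\nu\parallelsum(m^+-m^-)=\chi\nu$, on a box periodic in the tangential directions. Fix $\delta>0$ and a near-optimal $r\in\mathcal{R}_{\chi,0}$, and set, with $R'=r$,
\begin{equation*}
w_\e(\ex):=m^-\cdot\ex+\e R\!\left(\tfrac{\ex\cdot\nu}{\e}\right),\qquad\text{so that}\qquad\nabla w_\e(\ex)=m^-+r\!\left(\tfrac{\ex\cdot\nu}{\e}\right)\nu,\quad\nabla^2 w_\e(\ex)=\tfrac1\e\,r'\!\left(\tfrac{\ex\cdot\nu}{\e}\right)\nu\otimes\nu.
\end{equation*}
The change of variables $t=\ex\cdot\nu/\e$ then gives $\tfrac1\e\int F(\e\nabla^2 w_\e,\nabla w_\e)\,d\ex=\int_{\mathbb{R}}F\!\left(r'(t)\nu\otimes\nu,\,m^-+r(t)\nu\right)dt$, which equals $K$ up to $\delta$ after the harmless reparametrization $t\mapsto -t$ interchanging the sign of $r'$ and the two end states. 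Because $r\in\mathcal{R}_{\chi,0}$ is constant outside a compact interval, $w_\e$ already coincides with $m^\pm\cdot\ex+\mathrm{const}$ near the two faces once $\e$ is small, so no truncation is needed; smoothing $r$ to be $C^1$ and using that $\nabla^2 w_\e$ is supported where $r'\ne0$ absorbs the remaining error into $\delta$, exactly as in \eqref{est} and \cite[Prop.~5.2]{NY20}.

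\textbf{Globalization and main obstacle.} To reach a general $u\in W^{1,\infty}(\Omega)$ with $\nabla u\in BV(\Omega;\mathbb{R}^3)$, I would cover $\mathcal{H}^2$-almost all of $J_{\nabla u}$, up to an error set of small measure, by finitely many disjoint thin cylinders $Q_i$ around pieces of $C^1$ graphs on which $\nabla u^\pm$ and $\nu$ are nearly constant, transplant the model construction into each $Q_i$ (letting the profile drift slowly in the tangential directions to track the actual graph), mollify $u$ at an intermediate scale $\e\ll\delta_\e\ll1$ outside $\bigcup_i Q_i$, and glue with a partition of unity over thin collars. Off the cylinders, $F(0,\nabla u)\equiv0$ and $F\ge0$ force $\nabla F(0,\cdot)=0$ on $Z:=\{F(0,\cdot)=0\}\supseteq\overline{\mathrm{range}\,\nabla u}$, so a Taylor expansion of $F$ about $Z$ together with standard $BV$-mollification estimates makes $\tfrac1\e\int_{\Omega\setminus\bigcup_i Q_i}F\to0$ for suitable $\delta_\e$; on the collars the exponential approach of the profile to its end states keeps $\nabla u_\e$ near $Z$ and controls their vanishing contribution. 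Exhausting $J_{\nabla u}$, letting $\delta\to0$, and optimizing the scales gives (ii); the $W^{1,p}$ convergence is then immediate since $\nabla u_\e$ is bounded, equals (a mollification of) $\nabla u$ off a set of measure $\to0$, and converges to $\nabla u$ a.e. The main obstacle is exactly this globalization step: one must couple the transition-layer width, the mollification scale $\delta_\e$, and the fineness of the covering so that the tangential variation of $\nabla u^\pm$ within each cylinder, the second-order cost $\e|\nabla^2 u_\e|$, and the total collar measure all contribute negligibly — this is where the quadratic vanishing of $F$ near $Z$ and the restriction to profiles $r\in\mathcal{R}_{\chi,0}$ attaining their endpoints at finite parameter are essential. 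An alternative that avoids the diffuse-region mollification is to first approximate $u$ by a function with piecewise constant gradient (polyhedral $J_{\nabla u}$) of comparable jump cost, at the price of proving that density statement.
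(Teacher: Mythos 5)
A preliminary remark: the paper does not prove this statement at all --- it is quoted from \cite[Theorem 1.2]{Pol08} and used as a black box in the proof of \Cref{3dupbd} --- so the only question is whether your argument would establish it independently. It would not, because your step (i) is false under the stated hypotheses. You claim a $\Gamma$-liminf inequality, $\liminf_{\e\to 0}\tfrac1\e\int_\Omega F(\e\nabla^2 v_\e,\nabla v_\e)\,d\ex \ge \int_{J_{\nabla u}}K\,d\mathcal{H}^2$, valid for \emph{every} sequence $v_\e\to u$. But the hypotheses on $F$ admit the three-dimensional Aviles--Giga integrand, and, as this paper itself recalls in \Cref{link}, De Lellis \cite{deL02} showed that in three dimensions the conjectured limit $\mathcal{F}_0$ --- exactly the right-hand side above specialized to that integrand --- is not the limiting energy and the one-dimensional ansatz is not optimal; in particular the 1D jump cost is not a lower bound along arbitrary sequences. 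So no lower bound with density $K$ can hold for all competitors, and your slicing argument cannot be repaired: $F(a,b)$ carries no structure forcing the energy of a line slice to dominate the 1D profile cost once tangential derivatives enter $a$ and $b$, and genuinely multi-dimensional transition layers can beat every $r\in\mathcal{R}_{\chi(x,z),0}$. This is precisely why Poliakovsky's theorem is only an assertion about one specific constructed family (obtained in \cite{Pol07,Pol08} via a convolution-type smoothing of $u$, whose energy limit is computed directly), not a $\Gamma$-convergence statement of the form (i)+(ii).

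Even setting that aside, the construction half as sketched does not by itself yield the asserted equality: your covering misses a portion of $J_{\nabla u}$ of small positive $\mathcal{H}^2$-measure, so what you build satisfies only $\limsup\le\int_{J_{\nabla u}}K\,d\mathcal{H}^2+\delta$; without the (false) general liminf you still owe a matching lower estimate for that specific family, which would have to come from the explicit computation on the good cylinders together with a diagonal argument in $\delta$, and should be written out. Two further technical points. First, the ``quadratic vanishing of $F$ near $Z$'' that your diffuse-region estimate leans on is not available: $F$ is only $C^1$, so $F\ge0$ and $F=0$ on $Z$ give merely $\nabla F=0$ on $Z$, i.e. $o(|a|+\mathrm{dist}(b,Z))$ decay, and the mollification/gluing errors must be controlled with this weaker information --- one of the places where \cite{Pol08} is genuinely delicate. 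Second, your model profile places $m^+$ on the side $\ex\cdot\nu<0$, the orientation opposite to \Cref{jumpdef}, and your ``$t\mapsto-t$'' fix maps $\mathcal{R}_{\chi,0}$ to the reversed class; the clean choice is $\nabla w_\e=m^-+r(-\ex\cdot\nu/\e)\,\nu$, for which $\e\nabla^2 w_\e=-r'\,\nu\otimes\nu$ and the cost is exactly the integrand in the statement with $r\in\mathcal{R}_{\chi,0}$ itself.
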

\begin{proof}[Proof of \Cref{3dupbd}]
If we set 
\begin{equation*}
    F(a,b) = \frac{1}{2}\left(b_3 - \frac{|b_\bot|^2}{2}\right)^2 + \frac{1}{2}\sum_{i=1}^2 a_{ii}^2,
\end{equation*}
then
\begin{equation*}
    \frac{1}{\e}\int_\Omega F(\e\nabla^2 u_\e,\nabla u_\e) \,d\ex =\mathcal{ E}_\e(u_\e).
\end{equation*}
To evaluate the infimum in \Cref{polthm}, we can rescale and use \Cref{cubeprop} to see that it is 
\begin{equation*}
    \frac{|\nabla u_\bot^+ - \nabla u_\bot^-|^4}{12|\nabla u^+ - \nabla u^-|}.
\end{equation*}
This finishes the proof.
\end{proof}
\begin{remark}\label{bcremark}
A recovery sequence with specified boundary data for $u$ and $\nabla u$ could be constructed as in \cite[Section 6]{ConDeL07} or \cite[Theorem 1.1]{Pol07}.
\end{remark}

Finally, let us rephrase the theorems of the last two sections in terms of the original problem involving smectics: roughly speaking, equipartition of energy is optimal when the Gaussian curvature induced by the boundary conditions is not prohibitively large. More precisely, denoting by $\lambda_i$ the eigenvalues of $\nabla_\bot^2 u$, we have
\begin{align}\notag
\mathcal{E}_{\e}(u) &= \frac{1}{2}\int_\Omega\left[\e(\lambda_{1} + \lambda_{2})^2 + \frac{1}{\e}\left(\dz u - \frac{1}{2}|\nabla_\bot u|^2 \right)^2 \right]\,d\ex \\ \notag
&= \frac{1}{2}\int_\Omega\left[\e(\lambda_{1} - \lambda_{2})^2 + \frac{1}{\e}\left(\dz u - \frac{1}{2}|\nabla_\bot u|^2 \right)^2 \right]\,d\ex + 2\e\int_\Omega \lambda_{1}\lambda_{2} \, d\ex \\ \label{ourbps}
&= |Iu|(\Omega) +\frac{1}{2} \int_\Omega \left(\e^{\sfrac{1}{2}}|\lambda_{1} - \lambda_{2}|- \frac{1}{\e^{\sfrac{1}{2}}}\left|\dz u - \frac{1}{2}|\nabla_\bot u|^2 \right| \right)^2\, d\ex + 2\e\int_\Omega \overline{K} \, d\ex.
\end{align}
If $\e\int_\Omega \overline{K}\,d\ex$ is small compared to the energy $\mathcal{E}_\e(u)$, which by \eqref{control} can be enforced by choosing boundary conditions such that $\e\| \nabla_\bot u \|_{H^{\sfrac{1}{2}}(\partial \Omega)}^2 $ is small, then contribution of the curvature term is negligible. Thus the energy $\mathcal{E}_\e$ is minimized by minimizing $|Iu|$ among competitors saturating the perfect square. The matching upper bound demonstrates that this procedure is optimal in a reasonable range of situations. Furthermore, saturation of the perfect square entails
\begin{equation}\label{equiapp}
\int_\Omega \e(\lambda_1 - \lambda_2)^2 \,d\ex \approx \int_\Omega \frac{1}{\e} \left(\dz u - \frac{1}{2}|\nabla_\bot u|^2 \right)^2\,d\ex .
\end{equation}
Since $\e(\Delta_\bot u)^2$ and $\e(\lambda_{1} - \lambda_{2})^2$ differ by $4\e \overline{K}$, the assumption that the integral of the curvature is small and \eqref{equiapp} imply that
\begin{equation}\label{equiapp2}
\int_\Omega \e(\Delta_\bot u)^2 \,d\ex \approx \int_\Omega \frac{1}{\e} \left(\dz u - \frac{1}{2}|\nabla_\bot u|^2 \right)^2\,d\ex ,
\end{equation}
which is precisely the BPS equation squared and integrated over $\Omega$.

\begin{remark}
Our 1D ansatz  satisfies BPS equation $\left( \ref{3dbps}\right)$. Also, the condition that $\e\int_\Omega\overline{K}$ must be small for equipartition to be optimal coincides with the observation from \cite{SanKam03} that BPS solutions are not competitive when the curvature is very large, so that the result is qualitatively sharp in some sense. 
\end{remark}
{\color{black}\begin{remark}
Both the arguments for the lower and upper bound hold for the sequence of energies
\begin{align*}
\tilde {\mathcal E_{\e}}(u)
=\frac{1}{2}\int_\Omega \left[\frac{1}{\e}\left(\dz u - \frac{1}{2}|\nabla_\bot u|^2 \right)^2 +\e|\nabla_\bot^2 u|^2 \right]\,d\ex
\end{align*}
with trivial modifications. For lower bound, we only need to assume $\liminf \tilde {\mathcal E}_{\e_n}(u_n)$ is finite. 
\end{remark}
}

\section{Compactness}

\label{sec:cpt} The main result in this section is the compactness theorem.

\begin{theorem}
\label{compactness}Let $\Omega \subset \mathbb{R}^{3}$ be a bounded {\color{black} domain with $C^1$ boundary}, $%
\varepsilon _{n}\rightarrow 0,$ and $\left\{ u_{n}\right\} $ $\subset
H^{2}\left( \Omega \right) $ be a sequence of functions with uniformly bounded energies $\mathcal{E}_{\varepsilon _{n}}\left( u_{n}\right) $ such that $\left\Vert
\nabla u_{n}\right\Vert _{L^{p}(\Omega)}\leq C$ for some $p>6$ {\color{black}and $\left\Vert
\nabla u_{n}\right\Vert _{L^{2}(\partial \Omega)}\leq C$}. Assume also that $\Delta_{\bot}u_n \geq 0 $ {or $\Delta_\bot u_n \leq 0$} a.e. in $\Omega$. Then $\nabla u_{n}$ is precompact in $L^{q}\left( \Omega \right) $ for any $1\leq q<p$. 
\end{theorem}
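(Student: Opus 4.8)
\medskip
\noindent\textit{Proof proposal.} The plan is to run the compensated--compactness scheme announced in the introduction, with
\[
E_n=\left(\nabla_\bot u_n,\ \tfrac12|\nabla_\bot u_n|^2\right),\qquad
B_n=\left(-\tfrac12|\nabla_\bot u_n|^2\,\nabla_\bot u_n,\ \tfrac12|\nabla_\bot u_n|^2\right).
\]
After passing to a subsequence, $\nabla u_n\rightharpoonup \nabla u$ weakly in $L^p(\Omega;\R^3)$ for some $u$. The energy bound gives, on one hand, $w_n:=\dz u_n-\tfrac12|\nabla_\bot u_n|^2\to 0$ in $L^2(\Omega)$, and on the other hand $\|\e_n^{1/2}\Delta_\bot u_n\|_{L^2(\Omega)}\le C$. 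Since $p>6$, the $E_n$ are bounded in $L^{p/2}$ and the $B_n$ in $L^{p/3}$, with $p/2,\,p/3>2$ and $\tfrac{2}{p}+\tfrac{3}{p}<1$; hence, by the div--curl lemma of Tartar and Murat, it suffices to prove that $\curl E_n$ and $\dive B_n$ are precompact in $H^{-1}_{\mathrm{loc}}(\Omega)$, to identify the weak limit of $E_n\cdot B_n$, and to extract strong convergence from it.

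\medskip
\noindent For the curl, a direct computation gives $\curl E_n=(-\dy w_n,\ \dx w_n,\ 0)\to 0$ in $H^{-1}(\Omega)$, since $w_n\to0$ in $L^2$. For the divergence, expanding $\dz\big(\tfrac12|\nabla_\bot u_n|^2\big)=\nabla_\bot u_n\cdot\nabla_\bot(\dz u_n)$, substituting $\dz u_n=w_n+\tfrac12|\nabla_\bot u_n|^2$, and cancelling yields the identity (valid for $u_n\in H^2$)
\[
\dive B_n=\nabla_\bot\!\cdot(w_n\nabla_\bot u_n)-w_n\Delta_\bot u_n-\tfrac12|\nabla_\bot u_n|^2\Delta_\bot u_n.
\]
The first term tends to $0$ in $H^{-1}(\Omega)$: using $p>6$, H\"older's inequality and interpolation between $\|w_n\|_{L^{p/2}}\le C$ and $w_n\to0$ in $L^2$ give $w_n\nabla_\bot u_n\to0$ in $L^2(\Omega)$. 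The second term is bounded in $\mathcal{M}(\Omega)$ because $\|w_n\Delta_\bot u_n\|_{L^1}\le\|w_n\|_{L^2}\|\Delta_\bot u_n\|_{L^2}\le(2\e_nC)^{1/2}(2C/\e_n)^{1/2}=2C$. For the third term we use the sign hypothesis: since $\Delta_\bot u_n$ has constant sign, so does $|\nabla_\bot u_n|^2\Delta_\bot u_n$, so its total variation as a measure equals $\big|\int_\Omega|\nabla_\bot u_n|^2\Delta_\bot u_n\,d\ex\big|$; writing $|\nabla_\bot u_n|^2=2\dz u_n-2w_n$ and integrating $\int_\Omega\dz u_n\,\Delta_\bot u_n\,d\ex$ by parts (legitimate since $\partial\Omega\in C^1$ and $\nabla u_n\in H^1$) reduces it to boundary integrals of quadratic expressions in $\nabla u_n$, which are bounded by $C\|\nabla u_n\|_{L^2(\partial\Omega)}^2\le C$; combined with the $L^1$ bound on $w_n\Delta_\bot u_n$, this shows $|\nabla_\bot u_n|^2\Delta_\bot u_n$ is bounded in $\mathcal{M}(\Omega)$. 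Thus $\dive B_n$ is a sum of a sequence converging to $0$ in $H^{-1}$ and one bounded in $\mathcal{M}(\Omega)$; as it is also bounded in $W^{-1,p/3}(\Omega)$ with $p/3>2$, Murat's lemma gives precompactness of $\dive B_n$ in $H^{-1}_{\mathrm{loc}}(\Omega)$.

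\medskip
\noindent By the div--curl lemma, $E_n\cdot B_n\rightharpoonup E\cdot B$ in $\mathcal{D}'(\Omega)$, where $E,B$ are the weak limits. Let $\{\nu_x\}$ be the Young measure generated by $\nabla_\bot u_n$ (all relevant integrands have strictly sub-$p$ growth as $p>6$) and write $\lambda$ for the identity variable on $\R^2$, so $E=\big(\langle\nu_x,\lambda\rangle,\tfrac12\langle\nu_x,|\lambda|^2\rangle\big)$, $B=\big(-\tfrac12\langle\nu_x,|\lambda|^2\lambda\rangle,\tfrac12\langle\nu_x,|\lambda|^2\rangle\big)$, while $E_n\cdot B_n=-\tfrac14|\nabla_\bot u_n|^4\rightharpoonup-\tfrac14\langle\nu_x,|\lambda|^4\rangle$. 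Equating and using $\langle\nu_x,\lambda\rangle=\nabla_\bot u(x)$ gives, for a.e.\ $x$,
\[
\langle\nu_x,|\lambda|^4\rangle+\langle\nu_x,|\lambda|^2\rangle^2=2\,\langle\nu_x,\lambda\rangle\cdot\langle\nu_x,|\lambda|^2\lambda\rangle .
\]
Doubling variables (letting $\Lambda,M$ be independent with law $\nu_x$), the two sides of this identity differ by $\mathbb{E}\!\left[\tfrac12\big(|\Lambda|^2+|M|^2\big)\,|\Lambda-M|^2\right]$, so the identity forces $\Lambda=M$ a.s., i.e.\ $\nu_x=\delta_{\nabla_\bot u(x)}$ for a.e.\ $x$. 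Hence $\nabla_\bot u_n\to\nabla_\bot u$ in $L^q(\Omega)$ for every $1\le q<p$, and since $\dz u_n=w_n+\tfrac12|\nabla_\bot u_n|^2$ with $w_n\to0$ in $L^2$, also $\dz u_n\to\tfrac12|\nabla_\bot u|^2=\dz u$ in $L^q$ (using a.e.\ convergence along the subsequence, equiintegrability from the $L^p$ bound, and uniqueness of the weak limit). Running this argument on an arbitrary subsequence gives precompactness of $\nabla u_n$ in $L^q(\Omega)$ for $1\le q<p$.

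\medskip
\noindent The main obstacle is the $H^{-1}$-compactness of $\dive B_n$: the field $B_n$ carries the cubic term $-\tfrac12|\nabla_\bot u_n|^2\nabla_\bot u_n$, whose divergence produces $|\nabla_\bot u_n|^2\Delta_\bot u_n$, while the energy controls $\Delta_\bot u_n$ in $L^2$ only at the diverging rate $\e_n^{-1/2}$. It is precisely here that the two extra hypotheses enter—the constant sign of $\Delta_\bot u_n$ converts the finite integral of $|\nabla_\bot u_n|^2\Delta_\bot u_n$ into a bound on its total variation, and the uniform $L^2$ control of $\nabla u_n$ on $\partial\Omega$ makes that integral bounded via integration by parts. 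The remaining steps are the natural $3$D transcription of the two-dimensional compensated-compactness argument.
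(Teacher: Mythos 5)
Your proof is correct and follows essentially the same route as the paper: the same fields $E_n$ and $B_n$, the identical decomposition of $\dive B_n$ into $\nabla_\bot\cdot(w_n\nabla_\bot u_n)-w_n\Delta_\bot u_n-\tfrac12|\nabla_\bot u_n|^2\Delta_\bot u_n$ (the paper's $I+II+III$), the same use of the sign hypothesis to turn the $L^1$ integral of $|\nabla_\bot u_n|^2\Delta_\bot u_n$ into a bound on its total variation via integration by parts and the $L^2(\partial\Omega)$ control, Murat's lemma, and the div--curl lemma.

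The one substantive deviation is the last step, where you extract a Dirac Young measure from the div--curl identity. The paper works directly with weak limits $U_i,V_i$ of monomials of $\nabla_\bot u_n$: it tests against the square $\big(\partial_x u_n(\partial_x u_n-U_1)+\partial_y u_n(\partial_y u_n-V_1)\big)^2$, uses the identity \eqref{wkidentity} together with the auxiliary inequality $U_2V_1^2+V_2U_1^2\ge 2U_{11}U_1V_1$ (itself a weak limit of a square), and concludes $U_2=U_1^2,\ V_2=V_1^2$. You instead pass to the Young measure and use a doubling-of-variables identity, writing the defect as $\tfrac12\,\mathbb{E}\big[(|\Lambda|^2+|M|^2)|\Lambda-M|^2\big]\ge0$, which immediately forces a point mass. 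Both arguments are correct and start from the same compensated-compactness identity; yours gets to the Dirac conclusion in one clean step without needing the auxiliary square, and is arguably the tidier presentation.
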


Theorem \ref{compactness} is a direct corollary of the following stronger
proposition.

\begin{proposition}
\label{gcompact}Let $\Omega \subset \mathbb{R}^{3}$ be a bounded {\color{black} domain with $C^1$ boundary} and $%
\left\{ u_{n}\right\} $ $\subset H^{2}\left( \Omega \right) $ be a sequence
of functions satisfying 

\begin{equation}
  {\color{black} \left\Vert
\nabla u_{n}\right\Vert _{L^{\color{black}p}(\Omega)}\leq C, \text{ for some } p>6},
\label{Linfinitybd}
\end{equation}

\begin{equation}
\| \nabla u_n \|_{L^2(\partial \Omega)} \leq C, \label{boundarybd}
\end{equation}

\begin{equation}
\partial _{z}u_{n}-\frac{1}{2}\left\vert \nabla _{\bot }u_{n}\right\vert
^{2}\rightarrow 0\text{ strongly in }L^{2}\left( \Omega \right) ,
\label{L2bound}
\end{equation}
and %
\begin{equation}
\left\vert \partial _{z}u_{n}-\frac{1}{2}\left\vert \nabla _{\bot
}u_{n}\right\vert ^{2}\right\vert \left\vert \Delta _{\bot }u_{n}\right\vert 
\text{ is bounded in }L^{1}\left( \Omega \right) .  \label{L1bound}
\end{equation}%
{\color{black}If in addition}
\begin{equation}
\Delta_{\bot}u_n \geq 0  \text{ a.e. in }\Omega \text{ or  }\Delta_{\bot}u_n \leq 0  \text{ a.e. in }\Omega ,\label{add}
\end{equation}
then $\left( \nabla _{\bot }u_{n}\right) $ is precompact in $L^{q}\left(
\Omega \right)$ for $1\leq q <p$.
\end{proposition}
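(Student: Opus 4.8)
The plan is to pass to the Young measure generated by $\nabla u_n$ and to force it to be a Dirac mass a.e.\ by applying the div--curl lemma of Tartar \cite{Tar79,Tar83} together with the compactness lemma of Murat \cite{Mur78,Mur81ASNP} to the fields
\begin{equation*}
E_n = \left(\nabla_\bot u_n,\ \tfrac12|\nabla_\bot u_n|^2\right),\qquad B_n = \left(-\tfrac12|\nabla_\bot u_n|^2\,\nabla_\bot u_n,\ \tfrac12|\nabla_\bot u_n|^2\right)
\end{equation*}
from the introduction. First I would fix an arbitrary subsequence and, using \eqref{Linfinitybd}, pass to a further subsequence (not relabeled) so that $\nabla u_n\rightharpoonup\nabla u$ in $L^p(\Omega;\mathbb{R}^3)$ and $\nabla u_n$ generates a Young measure $\{\nu_x\}$. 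Writing $w_n:=\partial_z u_n-\tfrac12|\nabla_\bot u_n|^2$, \eqref{L2bound} gives $w_n^2\to 0$ in $L^1$; since $w_n^2$ is a function of $\nabla u_n$ of quartic growth and $p>6>4$ makes $\{w_n^2\}$ equi-integrable, $\langle\nu_x,(\lambda_3-\tfrac12|\lambda_\bot|^2)^2\rangle=0$ a.e., so each $\nu_x$ is concentrated on the paraboloid $P=\{\lambda_3=\tfrac12|\lambda_\bot|^2\}$ and is determined by its projection $\bar\nu_x$ onto the $\lambda_\bot$-plane. Since $p>3$ also makes quantities of cubic growth in $\nabla u_n$ equi-integrable, it suffices to prove $\bar\nu_x=\delta_{\nabla_\bot u(x)}$ a.e.: this yields $\nabla_\bot u_n\to\nabla_\bot u$ in $L^q$ for every $q<p$, and as the starting subsequence was arbitrary, precompactness follows.

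Next I would verify the hypotheses of the div--curl lemma. Since $E_n=\nabla u_n-(0,0,w_n)$ and $\curl\nabla u_n=0$, the components of $\curl E_n$ are first derivatives of $w_n$, so \eqref{L2bound} gives $\curl E_n\to 0$ in $H^{-1}(\Omega;\mathbb{R}^3)$. For $B_n$, using \eqref{cros1} one computes, for $u_n\in H^2(\Omega)$,
\begin{equation*}
\dive B_n = -\,\partial_z u_n\,\Delta_\bot u_n + \nabla_\bot\cdot\!\left(w_n\nabla_\bot u_n\right).
\end{equation*}
The second term tends to $0$ in $H^{-1}$: interpolating between \eqref{L2bound} and the $L^{p/2}$-bound on $w_n$ gives, using $p>6$, that $w_n\nabla_\bot u_n\to 0$ in $L^2(\Omega;\mathbb{R}^2)$. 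For the first term I would write $\partial_z u_n\,\Delta_\bot u_n=\tfrac12|\nabla_\bot u_n|^2\Delta_\bot u_n+w_n\Delta_\bot u_n$; the second summand is bounded in $L^1$ by \eqref{L1bound}, while for the first, \eqref{cros1} and the divergence theorem (valid since $u_n\in H^2(\Omega)$ and $\partial\Omega\in C^1$) give $\int_\Omega\tfrac12|\nabla_\bot u_n|^2\Delta_\bot u_n=\int_\Omega\partial_z u_n\Delta_\bot u_n-\int_\Omega w_n\Delta_\bot u_n$ with $\bigl|\int_\Omega\partial_z u_n\Delta_\bot u_n\bigr|\le C\|\nabla u_n\|_{L^2(\partial\Omega)}^2\le C$ by \eqref{boundarybd}. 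This is exactly where \eqref{add} is used: since $\tfrac12|\nabla_\bot u_n|^2\Delta_\bot u_n$ has a fixed sign, the bound on its integral upgrades to a uniform $L^1$-bound. Hence $\dive B_n=g_n+h_n$ with $g_n$ precompact in $H^{-1}$ and $h_n$ bounded in $L^1(\Omega)\subset\mathcal{M}(\Omega)$; since $B_n$ is bounded in $L^{p/3}$ with $p/3>2$, $\dive B_n$ is bounded in $W^{-1,p/3}$, and Murat's lemma then gives that $\dive B_n$ is precompact in $H^{-1}(\Omega)$. This verification — in particular the use of \eqref{add} to control $\tfrac12|\nabla_\bot u_n|^2\Delta_\bot u_n$ in $L^1$ — is the step I expect to be the main obstacle.

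Finally, $E_n$ and $B_n$ are bounded in $L^{p/2}$ and $L^{p/3}$, hence in $L^2$ since $p>6$. Writing $E(\lambda)=(\lambda_\bot,\tfrac12|\lambda_\bot|^2)$ and $B(\lambda)=(-\tfrac12|\lambda_\bot|^2\lambda_\bot,\tfrac12|\lambda_\bot|^2)$, so that $E_n=E(\nabla u_n)$ and $B_n=B(\nabla u_n)$, equi-integrability ($p>4$) gives $E_n\rightharpoonup\int E\,d\nu_x$, $B_n\rightharpoonup\int B\,d\nu_x$, and $E_n\cdot B_n=-\tfrac14|\nabla_\bot u_n|^4\rightharpoonup\int(E\cdot B)\,d\nu_x$ in $L^1$; the div--curl lemma therefore yields $\int(E\cdot B)\,d\nu_x=\bigl(\int E\,d\nu_x\bigr)\cdot\bigl(\int B\,d\nu_x\bigr)$ a.e. Restricting to $P$ and setting $t=\lambda_\bot$, $m=\int t\,d\bar\nu_x$, $Q=\int|t|^2d\bar\nu_x$, $V=\int|t|^2t\,d\bar\nu_x$, $R=\int|t|^4d\bar\nu_x$, this relation reduces to $R+Q^2=2\,m\cdot V$, which after symmetrization is equivalent to
\begin{equation*}
\tfrac12\iint\bigl(|t|^2+|s|^2\bigr)\,|t-s|^2\,d\bar\nu_x(t)\,d\bar\nu_x(s)=0 .
\end{equation*}
The integrand is nonnegative and vanishes only when $t=s$, so $\bar\nu_x$ is a single point mass, necessarily $\delta_{\nabla_\bot u(x)}$, for a.e.\ $x$. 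Hence $\nabla_\bot u_n\to\nabla_\bot u$ strongly in $L^q$ for all $1\le q<p$, which is the claim. The case $\Delta_\bot u_n\le0$ is identical, the sign being used only to bound $\tfrac12|\nabla_\bot u_n|^2\Delta_\bot u_n$ in $L^1$.
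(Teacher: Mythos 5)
Your argument is correct and follows essentially the same strategy as the paper: same vector fields $E_n$ and $B_n$, the same decomposition of $\dive B_n$ into a term tending to zero in a negative Sobolev norm, an $L^1$-bounded term, and $-\tfrac12|\nabla_\bot u_n|^2\Delta_\bot u_n$, the same use of assumption \eqref{add} to upgrade the boundary-controlled integral bound on the last term to a uniform $L^1$ bound, and the same invocation of the div--curl lemma via Murat's lemma. The only real divergence is in the final algebraic step. The paper works directly with the weak limits $U_1,\dots,U_{22},V_1,\dots,V_4$ of the monomials in $\nabla_\bot u_n$, derives \eqref{wkidentity} from the div--curl identity, and then needs two auxiliary nonnegativity computations — the weak limit of $\bigl((\partial_x u_n)^2-\partial_x u_n U_1+(\partial_y u_n)^2-\partial_y u_n V_1\bigr)^2$ and the extra estimate \eqref{lesstn} — before concluding $U_2=U_1^2$, $V_2=V_1^2$ via convexity. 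You instead pass to the Young measure $\nu_x$, observe that \eqref{L2bound} forces it onto the paraboloid $P$, and reduce the div--curl identity to $R+Q^2=2\,m\cdot V$; this symmetrizes into a single manifestly nonnegative double integral $\tfrac12\iint(|t|^2+|s|^2)|t-s|^2\,d\bar\nu_x\,d\bar\nu_x=0$ whose vanishing immediately forces $\bar\nu_x$ to be a point mass. The Young measure reformulation buys a cleaner finish, at the (modest) price of the extra equi-integrability bookkeeping needed to justify passing to Young-measure averages for quartic quantities, which your $p>6$ assumption covers comfortably. Your handling of the cross term $\nabla_\bot\cdot(w_n\nabla_\bot u_n)$ — interpolating $w_n$ between $L^2$ and $L^{p/2}$ to get $w_n\nabla_\bot u_n\to0$ in $L^2$ — is also a small simplification of the paper's route through $W^{-1,\frac{2p}{p+2}}$; both require exactly $p>6$, which is presumably where that threshold comes from.
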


\bigskip

We first prove a lemma used in the proof of \Cref{gcompact}.
\begin{lemma}
\label{divlemma}Under the assumptions $\left( %
\ref{Linfinitybd}\right)-\left(\ref{add}\right)$, $\dive B_{n}$ is {\color{black} relatively } compact in $H^{-1}\left(
\Omega \right) ,$ where 
\begin{equation*}
B_{n}=\left( -\frac{\nabla _{\bot }u_{n}}{2}\left\vert \nabla _{\bot
}u_{n}\right\vert ^{2},\frac{1}{2}\left\vert \nabla _{\bot }u_{n}\right\vert
^{2}\right) .
\end{equation*}
\end{lemma}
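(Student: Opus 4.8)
The plan is to split $\dive B_n$ into a term that converges to zero in $H^{-1}(\Omega)$ and a term that is bounded in the space of Radon measures $\mathcal{M}(\Omega)$, and then to invoke Murat's lemma: a sequence which is bounded in $W^{-1,q}(\Omega)$ for some $q>2$ and decomposes as (a sequence precompact in $H^{-1}$) $+$ (a sequence bounded in $\mathcal{M}(\Omega)$) is precompact in $H^{-1}(\Omega)$. Set $\phi_n := \dz u_n - \tfrac12|\nabla_\bot u_n|^2$, so that $\phi_n\to 0$ in $L^2(\Omega)$ by \eqref{L2bound}. A direct computation (justified by smooth approximation in $H^2$, using $\dz\nabla_\bot u_n = \nabla_\bot \dz u_n$) gives
\begin{equation*}
\dive B_n = \nabla_\bot\cdot(\phi_n\nabla_\bot u_n) - \dz u_n\,\Delta_\bot u_n = \nabla_\bot\cdot(\phi_n\nabla_\bot u_n) - \phi_n\Delta_\bot u_n - \tfrac12|\nabla_\bot u_n|^2\Delta_\bot u_n .
\end{equation*}
I would set $g_n := \nabla_\bot\cdot(\phi_n\nabla_\bot u_n)$ and $h_n := -\dz u_n\,\Delta_\bot u_n$, estimate the two pieces separately, and observe that $\dive B_n$ is bounded in $W^{-1,p/3}(\Omega)$ with $p/3>2$ because $B_n$ is bounded in $L^{p/3}(\Omega;\mathbb{R}^3)$ by \eqref{Linfinitybd}.

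For $g_n$, the claim is that $\phi_n\nabla_\bot u_n\to 0$ in $L^2(\Omega;\mathbb{R}^2)$, which immediately yields $g_n\to 0$ in $H^{-1}(\Omega)$. To get the $L^2$ convergence I would note that \eqref{Linfinitybd} gives $\|\phi_n\|_{L^{p/2}(\Omega)}\leq C$ (both $\dz u_n$ and $|\nabla_\bot u_n|^2$ are bounded in $L^{p/2}$), and interpolating this against $\|\phi_n\|_{L^2}\to 0$ gives $\phi_n\to 0$ in $L^\rho(\Omega)$ with $\tfrac1\rho = \tfrac12-\tfrac1p$; since $p>6$ one has $2\le\rho<p/2$, so the interpolation exponent is genuinely $<1$. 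Then H\"older's inequality with $\|\nabla_\bot u_n\|_{L^p}\leq C$ gives $\|\phi_n\nabla_\bot u_n\|_{L^2}\leq\|\phi_n\|_{L^\rho}\|\nabla_\bot u_n\|_{L^p}\to 0$. This is exactly where the assumption $p>6$ is used.

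For $h_n$, the summand $\phi_n\Delta_\bot u_n$ is bounded in $L^1(\Omega)$ directly by \eqref{L1bound}. For the summand $\tfrac12|\nabla_\bot u_n|^2\Delta_\bot u_n$, the sign hypothesis \eqref{add} forces it to have a fixed sign, so its $L^1$-norm equals $\bigl|\tfrac12\int_\Omega|\nabla_\bot u_n|^2\Delta_\bot u_n\,d\ex\bigr|$; writing $\dz u_n = \phi_n + \tfrac12|\nabla_\bot u_n|^2$ reduces this to bounding $\int_\Omega\dz u_n\,\Delta_\bot u_n\,d\ex$ (the $\int\phi_n\Delta_\bot u_n$ contribution again being controlled by \eqref{L1bound}). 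Here I would use the identity \eqref{cros1}, $\dz u_n\Delta_\bot u_n = \nabla_\bot\cdot(\dz u_n\nabla_\bot u_n) - \tfrac12\dz(|\nabla_\bot u_n|^2)$; since $\nabla u_n\in H^1(\Omega)$ the relevant fields lie in $W^{1,3/2}(\Omega)$, so the divergence theorem applies and $\int_\Omega\dz u_n\Delta_\bot u_n\,d\ex$ equals a boundary integral controlled by $C\|\nabla u_n\|_{L^2(\partial\Omega)}^2$, which is uniformly bounded by \eqref{boundarybd}. Hence $h_n$ is bounded in $L^1(\Omega)$, so in $\mathcal{M}(\Omega)$, and Murat's lemma concludes.

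The main obstacle — and the only place where the sign assumption \eqref{add} and the boundary bound \eqref{boundarybd} are genuinely used — is this uniform $L^1$ bound on $|\nabla_\bot u_n|^2\Delta_\bot u_n$: without a sign one cannot pass from control of the signed integral to control of the total variation, and the integration by parts via \eqref{cros1} is what converts the a priori uncontrolled bulk integral $\int_\Omega\dz u_n\Delta_\bot u_n$ into a boundary quantity covered by the hypotheses. Everything else (the identity for $\dive B_n$, the $L^p$-interpolation for $g_n$, the $W^{-1,p/3}$ bound) is soft.
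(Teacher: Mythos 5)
Your proposal is correct and follows essentially the same path as the paper's proof: the identity $\dive B_n = \nabla_\bot\cdot(\phi_n\nabla_\bot u_n) - \phi_n\Delta_\bot u_n - \tfrac12|\nabla_\bot u_n|^2\Delta_\bot u_n$ is exactly the decomposition $I+II+III$ in \eqref{divBn}, the $\mathcal{M}(\Omega)$-bound on $III$ via the sign hypothesis, the integration by parts through \eqref{cros1}, the boundary bound \eqref{boundarybd}, and the $L^1$ bound \eqref{L1bound} is the same as the paper's argument built on \eqref{entropy}, and the conclusion via the $W^{-1,p/3}$ a priori bound and Murat's lemma is likewise identical. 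The one small deviation is cosmetic: you interpolate $\|\phi_n\|_{L^2}\to 0$ against $\|\phi_n\|_{L^{p/2}}\le C$ to push $\phi_n\nabla_\bot u_n\to 0$ in $L^2$ and hence $g_n\to 0$ in $H^{-1}$ directly, whereas the paper contents itself with H\"older giving $I\to 0$ in $W^{-1,\frac{2p}{p+2}}$ with $\frac{2p}{p+2}<2$ and then lets the Murat interpolation absorb the gap; both work, and both use $p>6$ at the same point of the argument.
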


\begin{proof}
We prove the Lemma when  $u_{n}$ is smooth and general case follows by
approximating. By $\left( \ref{L2bound}\right) ,$%
\begin{eqnarray}
&&\partial _{z}\left( \partial _{x}u_{n}\right) -\partial _{x}\left( \frac{1%
}{2}\left\vert \nabla _{\bot }u_{n}\right\vert ^{2}\right)   \label{xeq} \\
&=&\partial _{x}\left( \partial _{z}u_{n}-\frac{1}{2}\left\vert \nabla
_{\bot }u_{n}\right\vert ^{2}\right) \rightarrow 0\text{ strongly in }%
H^{-1}\left( \Omega \right) , \quad\text{and} \notag \\
&&\partial _{z}\left( \partial _{y}u_{n}\right) -\partial _{y}\left( \frac{1%
}{2}\left\vert \nabla _{\bot }u_{n}\right\vert ^{2}\right)   \label{yeq} \\
&=&\partial _{y}\left( \partial _{z}u_{n}-\frac{1}{2}\left\vert \nabla
_{\bot }u_{n}\right\vert ^{2}\right) \rightarrow 0\text{ strongly in }%
H^{-1}\left( \Omega \right) .  \notag
\end{eqnarray}

Multiplying  $\left( \ref{xeq}\right) $ by $\partial _{x}u_{n}$ and $\left( %
\ref{yeq}\right) $ by $\partial _{y}u_{n}$ then summing,  we have 
\begin{eqnarray}
&&\partial _{z}\left( \frac{1}{2}\left\vert \nabla _{\bot }u_{n}\right\vert
^{2}\right) -\partial _{x}\left( \frac{\partial _{x}u_{n}}{2}\left\vert
\nabla _{\bot }u_{n}\right\vert ^{2}\right) -\partial _{y}\left( \frac{%
\partial _{y}u_{n}}{2}\left\vert \nabla _{\bot }u_{n}\right\vert ^{2}\right) 
\notag \\
&=&\partial _{x}\left( \partial _{x}u_{n}\left( \partial _{z}u_{n}-\frac{1}{2%
}\left\vert \nabla _{\bot }u_{n}\right\vert ^{2}\right) \right) +\partial
_{y}\left( \partial _{y}u_{n}\left( \partial _{z}u_{n}-\frac{1}{2}\left\vert
\nabla _{\bot }u_{n}\right\vert ^{2}\right) \right)   \notag \\
&&-\left( \partial _{z}u_{n}-\frac{1}{2}\left\vert \nabla _{\bot
}u_{n}\right\vert ^{2}\right) \Delta _{\bot }u_{n}-\frac{1}{2}\left\vert
\nabla _{\bot }u_{n}\right\vert ^{2}\Delta _{\bot }u_{n}  \notag \\
&=&I+II+III.  \label{divBn}
\end{eqnarray}%
Here 
\begin{equation*}
I=\partial _{x}\left( \partial _{x}u_{n}\left( \partial _{z}u_{n}-\frac{1}{2}%
\left\vert \nabla _{\bot }u_{n}\right\vert ^{2}\right) \right) +\partial
_{y}\left( \partial _{y}u_{n}\left( \partial _{z}u_{n}-\frac{1}{2}\left\vert
\nabla _{\bot }u_{n}\right\vert ^{2}\right) \right) {\color {black} \rightarrow 0 }
\end{equation*}%
 in $ {\color{black}W^{-1,\frac{2p}{p+2}}}\left( \Omega \right) $ {\color{black} up to a subsequence}, 
\begin{equation*}
II=-\left( \partial _{z}u_{n}-\frac{1}{2}\left\vert \nabla _{\bot
}u_{n}\right\vert ^{2}\right) \Delta _{\bot }u_{n}
\end{equation*}%
is bounded in $L^{1}(\Omega)$, and 
\begin{equation*}
III=-\frac{1}{2}\left\vert \nabla _{\bot }u_{n}\right\vert ^{2}\Delta _{\bot
}u_{n} .
\end{equation*}%
If $III$ is bounded in $\mathcal{M}\left( \Omega
\right) $, the space of measures, then the right hand side of $\left( \ref%
{divBn}\right) $ is the sum of a term {\color{black} relatively } compact in $W^{-1,\frac{2p}{p+2}}(\Omega)$ and a term bounded
in $\mathcal{M}\left( \Omega \right)$, so that by the embedding theorem, the right hand side of $\left( \ref{divBn}\right) $ is {\color{black} relatively} compact in $%
W^{-1,r}\left( \Omega \right) $ for some  $1\leq r<2$. On the other hand,
assumption $\left( \ref{Linfinitybd}\right) $ implies the left hand side of $%
\left( \ref{divBn}\right) $ is bounded in $W^{-1,{\color{black}\frac{p}{3}}} \left( \Omega
\right) $. {\color{black} Relative } compactness of $\dive B_{n}$ in $H^{-1}(\Omega)$ follows from interpolation.\par 
To finish the proof, we show $III$ is bounded in $\mathcal{M}\left( \Omega
\right)$ {\color{black} under the additional assumption \eqref{add}}. Rewrite $\left( \ref{divBn}\right) $ as %
\begin{equation}
\partial _{z}\left( \eta \left( u_{n}\right) \right) -\dive_{\bot
}\left( F\left( u_{n}\right) \right) +G\left( u_{n}\right) =\mu _{n},
\label{entropy}
\end{equation}%
where%
\begin{eqnarray*}
\eta \left( u_{n}\right)  &=&\frac{1}{2}\left\vert \nabla _{\bot
}u_{n}\right\vert ^{2},\text{ }F\left( u_{n}\right) =\left( \partial
_{x}u_{n}\partial _{z}u_{n},\partial _{y}u_{n}\partial _{z}u_{n}\right) , \\
G\left( u_{n}\right)  &=&\left( \partial _{z}u_{n}-\frac{1}{2}\left\vert
\nabla _{\bot }u_{n}\right\vert ^{2}\right) \Delta _{\bot }u_{n}, \\
\mu _{n} &=&-\frac{1}{2}\left\vert \nabla _{\bot }u_{n}\right\vert
^{2}\Delta _{\bot }u_{n}.
\end{eqnarray*}%
Let $\Omega^{-}=\{\ex \in \Omega: \mu _{n}(\ex)\leq 0\}$  and $\Omega^{+}=\{\ex \in \Omega: \mu _{n}(\ex)\geq 0\}$. Since (\ref{add}) holds, then $\Omega=\Omega^{-}$ or $\Omega=\Omega^{+}$. Assume $\Omega=\Omega^{-}$ (the other case can be proved similarly). Integrating $\left( %
\ref{entropy}\right) $ over $\Omega$ and using the divergence theorem on the first two terms yields

\begin{eqnarray*}
\int_{\Omega }-\mu _{n}dx\,dy\,dz &=&\int_{\partial\Omega}-\eta \left( u_{n}\right)\nu_3 \,d\mathcal{H}^2+\int_{\partial\Omega} F\left(
u_{n}\right)\cdot \nu_\bot \,d\mathcal{H}^2  -\int_\Omega G\left( u_{n}\right) d\ex \\
&\leq & \|\nabla_\bot u\|^2_{L^2(\partial \Omega)}+ \|\dz u\|_{L^2(\partial \Omega)}\|\nabla_\bot u\|_{L^2(\partial \Omega)}+ \mathcal{E}_{\e_n}(u_n)\\
&\leq& C
\end{eqnarray*} for some constant $C$ depending on the energy bound and $\| \nabla u_n \|_{L^2(\partial \Omega)}$. Therefore 
${-\mu_{n}}\mathcal{L}^3\mres \Omega$ is bounded in $\mathcal{M}\left(\Omega \right) $.
\end{proof}
{\color{black}
\begin{remark}

\begin{itemize}
\item[]
\item A special case satisfying (\ref{Linfinitybd}) is  $\|\nabla u_n\|_{L^{\infty}(\Omega)} \leq C$.
\item {\color{black} In the proof of Lemma \ref{divlemma}, the convergence result in \eqref{xeq} and \eqref{yeq} has not been used; however, it is used below in the proof of Proposition \ref{gcompact}.}
\end{itemize}
\end{remark}
}

\begin{proof}[Proof of Proposition \ref{gcompact}]
Set
\begin{equation*}
E_{n}=\left( \nabla _{\bot }u_{n},\frac{1}{2}\left\vert \nabla _{\bot
}u_{n}\right\vert ^{2}\right) \textup{  and  }B_{n}=\left( -\frac{\nabla _{\bot
}u_{n}}{2}\left\vert \nabla _{\bot }u_{n}\right\vert ^{2},\frac{1}{2}%
\left\vert \nabla _{\bot }u_{n}\right\vert ^{2}\right) .
\end{equation*}%
Lemma \ref{divlemma} together with $\left( \ref{xeq}\right)$ and $\left( \ref%
{yeq}\right) $ implies 
\begin{equation*}
\mathrm{curl}\, E_{n}\text{ and }\mathrm{div}\, B_{n}\text{ are {\color{black} relatively} compact in }%
H^{-1}\left( \Omega \right) .
\end{equation*}%
If $E_{n} \rightharpoonup E_{\infty },$ $B_{n} \rightharpoonup B_{\infty }$ in 
$L^{\color{black} 2 }\left( \Omega \right)$, then Tartar-Murat's div-curl Lemma applied to $%
E_n$ and $B_n$ yields
\begin{equation}
E_{n}\cdot B_{n}\overset{}{\rightharpoonup }E_{\infty }\cdot B_{\infty }%
\text{ in }\mathcal{D}^{\prime }\left( \Omega \right) .\text{ }
\label{weaklimit}
\end{equation}%
We introduce the following notations for the weak limits in $L^{\color{black}r
}\left( \Omega \right) {\color{black} \text{ for } r>1}:$%
\begin{equation*}
\left( \partial _{x}u_{n}\right) ^{4} \rightharpoonup U_{4},\text{ }\left(
\partial _{y}u_{n}\right) ^{4} \rightharpoonup V_{4},\text{ }\left( \partial
_{x}u_{n}\right) ^{3} \rightharpoonup U_{3},\text{ }\left( \partial
_{y}u_{n}\right) ^{3}\rightharpoonup V_{3},
\end{equation*}%
\begin{equation*}
\left( \partial _{x}u_{n}\right) ^{2}\partial _{y}u_{n} \rightharpoonup
U_{21},\text{ }\partial _{x}u_{n}\left( \partial _{y}u_{n}\right)
^{2} \rightharpoonup U_{12},\text{ }\left( \partial _{x}u_{n}\right)
^{2} \rightharpoonup U_{2},\text{ }\left( \partial _{y}u_{n}\right)
^{2} \rightharpoonup V_{2},
\end{equation*}%
and%
\begin{equation*}
\left( \partial _{x}u_{n}\right) ^{2}\left( \partial _{y}u_{n}\right)
^{2} \rightharpoonup U_{22},\,\partial _{x}u_{n}\partial
_{y}u_{n} \rightharpoonup U_{11},\text{ }\partial _{x}u_{n}\rightharpoonup
U_{1},\text{ }\partial _{y}u_{n} \rightharpoonup V_{1}.
\end{equation*}%
{Here $r$ depends on $p$ and the term in question but is greater than $1$ for each.} Under these notations, $\left( \ref{weaklimit}\right) $ can be written as 
\begin{equation*}
-\frac{1}{4}\left\vert \nabla _{\bot }u_{n}\right\vert ^{4}\rightharpoonup
\left( U_{1,}V_{1},\frac{1}{2}\left( U_{2}+V_{2}\right) \right) \cdot \left(
-\frac{1}{2}U_{3}-\frac{1}{2}U_{12},-\frac{1}{2}U_{21}-\frac{1}{2}V_{3},%
\frac{1}{2}\left( U_{2}+V_{2}\right) \right) .
\end{equation*}%
From this it follows that
\begin{equation*}
-\frac{1}{4}\left( U_{4}+2U_{22}+V_{4}\right) =-\frac{1}{2}U_{1}U_{3}-\frac{1%
}{2}U_{1}U_{12}-\frac{1}{2}V_{1}U_{21}-\frac{1}{2}V_{1}V_{3}+\frac{1}{4}%
\left( U_{2}+V_{2}\right) ^{2},
\end{equation*}%
or, equivalently,%
\begin{equation}
0=U_{4}+2U_{22}+V_{4}-2U_{1}U_{3}-2U_{1}U_{12}-2V_{1}U_{21}-2V_{1}V_{3}+%
\left( U_{2}+V_{2}\right) ^{2}.  \label{wkidentity}
\end{equation}%
Next, we consider 
\begin{eqnarray*}
&&\left( \left( \partial _{x}u_{n}\right) ^{2}-\partial
_{x}u_{n}U_{1}+\left( \partial _{y}u_{n}\right) ^{2}-\partial
_{y}u_{n}V_{1}\right) ^{2} \\
&=&\left( \partial _{x}u_{n}\right) ^{2}\left( \partial
_{x}u_{n}-U_{1}\right) ^{2}+2\partial _{x}u_{n}\partial _{y}u_{n}\left(
\partial _{x}u_{n}-U_{1}\right) \left( \partial _{y}u_{n}-V_{1}\right)
+\left( \partial _{y}u_{n}\right) ^{2}\left( \partial _{y}u_{n}-V_{1}\right)
^{2} \\
&\rightharpoonup &U_{4}-2U_{1}U_{3}+U_{2}U_{1}^{2}+2\left(
U_{22}-U_{21}V_{1}-U_{1}U_{12}+U_{11}U_{1}V_{1}\right)
+V_{4}-2V_{1}V_{3}+V_{2}V_{1}^{2} \\
&\underset{\eqref{wkidentity}}{=}&-\left( U_{2}+V_{2}\right)
^{2}+U_{2}U_{1}^{2}+V_{2}V_{1}^{2}+2U_{11}U_{1}V_{1}.
\end{eqnarray*}%
Observe that
{\color{black}\begin{eqnarray}\notag
\left( \partial _{x}u_{n}U_{1}+\partial _{y}u_{n}V_{1}-U_2-V_2\right)
^{2}
&=&(U_2+V_2)^2-2\left(\partial _{x}u_{n}U_{1}+\partial _{y}u_{n}V_{1}\right)\left(U_2+V_2\right)+\left( \partial _{x}u_{n}U_{1}+\partial _{y}u_{n}V_{1}\right)
^{2}\notag \\
&\rightharpoonup&(U_2+V_2)^2-2\left(U_2+V_2\right)\left(U^2_{1}+V^2_{1}\right)+ U_{2}U_{1}^{2}+V_{2}V_{1}^{2}+2U_{11}U_{1}V_{1}\notag \\
&\leq&(U_2+V_2)^2-U_{2}U_{1}^{2}-V_{2}V_{1}^{2}-2U_{11}U_{1}V_{1},\label{lesstn}
\end{eqnarray}%
where the last inequality follows from 
\begin{equation*}
U_2V_1^2+V_2U_1^2\geq 2U_{11}U_1V_1,
\end{equation*}
which is a direct conclusion from weak limit
\begin{equation*}
\left(\partial _{x}u_{n}V_{1}-\partial _{y}u_{n}U_{1}\right)^2\rightharpoonup U_2V_1^2+V_2U_1^2-2U_{11}U_1V_1.
\end{equation*}
Thus}
\begin{equation*}
\left( \left( \partial _{x}u_{n}\right) ^{2}-\partial _{x}u_{n}U_{1}+\left(
\partial _{y}u_{n}\right) ^{2}-\partial _{y}u_{n}V_{1}\right)
^{2}\rightharpoonup -\left( U_{2}+V_{2}\right)
^{2}+U_{2}U_{1}^{2}+V_{2}V_{1}^{2}+2U_{11}U_{1}V_{1}\underset{\eqref{lesstn}}{\leq} 0.
\end{equation*}%
From this we conclude that
\begin{equation*}
\left( \partial _{x}u_{n}\right) ^{2}-\partial _{x}u_{n}U_{1}+\left(
\partial _{y}u_{n}\right) ^{2}-\partial _{y}u_{n}V_{1}\rightarrow 0,
\end{equation*}%
so that passing to the limit on the left side, we have 
\begin{equation*}
U_{2}-U_{1}^{2}+V_{2}-V_{1}^{2}=0.
\end{equation*}%
Since {\color{black} $F(s)=s^2$ is a convex function, by Lemma 2 in  \cite{Tar79},  we have } $U_{2}\geq U_{1}^{2},$ $V_{2}\geq V_{1}^{2}$. Thus it must be the case that
\begin{equation*}
U_{2}=U_{1}^{2},\text{ }V_{2}=V_{1}^{2};
\end{equation*}%
{\color{black}in other words, $\lim_{n\rightarrow \infty}||\nabla_{\bot}u_n||_{L^2}=||\lim_n \nabla_{\bot}u_n||_{L^2}$, together with the weak convergence of $\nabla_{\bot}u_n$, } the strong convergence of $\nabla _{\bot }u_{n}$ in $L^{2}$ follows.
\end{proof}

\bigskip

\begin{proof}[Proof of Theorem \ref{compactness}]
Boundedness of $\mathcal{E}%
_{\varepsilon _{n}}\left( u_{n}\right) $ implies $\left( \ref{L2bound}%
\right) $ and $\left( \ref{L1bound}\right) .$  By Proposition \ref{gcompact}, $\nabla _{\bot }u_{n}$ is
precompact in $L^{2}(\Omega),$ {and hence in $L^q(\Omega)$ for $1\leq q < p$ by the uniform $L^p$ bound.} Compactness of $\partial _{z}u_{n}$ follows from
uniform boundedness of $\left\Vert \nabla _{\bot }u_{n}\right\Vert
_{L^{\color{black}p }(\Omega)}$ {\color{black} for $p>6$} and the fact $\partial _{z}u_{n}-\frac{1}{2}\left\vert
\nabla _{\bot }u_{n}\right\vert ^{2}\rightarrow 0$ in $L^{2}.$
\end{proof}
{\color{black}
\begin{remark}
The additional assumptions \eqref{Linfinitybd}, \eqref{boundarybd}, \eqref{add} in Theorem \ref{compactness} and Proposition  \ref{gcompact} are used in the proof of relative compactness of $\dive B_n$ in Lemma \ref{divlemma}.  We comment that the assumption  \eqref{Linfinitybd} is physically justifiable since the  model is only valid in  the limit of small strains \cite{SanKam03}. Assumption \eqref{boundarybd} is less restrictive than $\e_n \int_{\Omega}\bar{K} \rightarrow 0$. We would like to remove the technical assumption \eqref{add} in future work. An alternative approach to handling compactness is to rewrite the problem in terms of the geometric formulation of Tartar's conjecture \cite{Tar83}. Recall that the general question regarding upgrading weak convergence to strong convergence can be stated as follows: given a weakly convergent sequence of functions $z^{\varepsilon }:%
\mathbb{R}^{m}\rightarrow \mathbb{R}^{N}$ subject to linear differential
constraints of the form%
\begin{equation}
\sum_{j=0}^{m-1}A_{j}\partial _{j}z^{\varepsilon }=\varphi ^{\varepsilon },%
\text{ \ \ \ }A_{j}\text{ a }s\times N\text{ constant matrix,}
\label{linear}
\end{equation}%
and nonlinear algebraic constraints%
\begin{equation}
\left\{ z^{\varepsilon }\left( y\right) \right\} \subset M\text{ \ \ \ for
a.e. }y\in \mathbb{R}^{m},  \label{nonlinear}
\end{equation}%
where $M\subset \mathbb{R}^{N}$ is a subset, usually a manifold, what kind
of structure on $A_{j}$ and $M$ would suppress oscillations in $\left\{
z^{\varepsilon }\right\} $, so that $\{ z^\varepsilon\}$ contains a strongly convergent
subsequence? Tartar's conjecture can be expressed in terms of a geometric
condition. We introduce the oscillation variety%
\begin{equation*}
V=\left\{ \left( \xi ,\lambda \right) :\sum_{j=0}^{m-1}\xi _{j}A_{j}=0,\text{
}\xi \neq 0\right\} \subset \mathbb{R}^{m}\times \mathbb{R}^{N},
\end{equation*}%
and the wave cone, which is the projection of $V$ to $\mathbb{R}^{N}$:%
\begin{equation*}
\Lambda =P_{N}V=\left\{ \lambda :\exists \xi \neq 0,\text{ such that }\left(
\xi ,\lambda \right) \in V\right\} .
\end{equation*}%
Given any $a,$ let 
\begin{equation*}
\Lambda _{a}=a+\Lambda =\left\{ a+\lambda ,\lambda \in \Lambda \right\}
\end{equation*}%
be the translated cone. Tartar conjectured:

\begin{conjecture}
If the translated wave cone is separated from $M$ in the sense that 
\begin{equation*}
\Lambda _{a}\cap M=\left\{ a\right\} 
\end{equation*}%
for all $a,$ then the Young measure $\nu _{x}$ is a Dirac mass for almost every $x$, which implies the relative compactness in $L^p$
\end{conjecture}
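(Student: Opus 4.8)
The plan is to attack the conjecture through the Young measure / compensated compactness machinery of Tartar and Murat, with the caveat that, as worded in full generality, the conjecture is in fact \emph{false}, so a genuine proof can only be expected under an additional structural hypothesis on the pair $(\Lambda, M)$; I will indicate both the program and precisely where it breaks down. First, after passing to a subsequence, I would associate to $\{z^{\varepsilon}\}$ a family of Young measures $\{\nu_x\}$, which is legitimate since $\{z^\varepsilon\}$ is bounded in $L^p$ with $p>1$; the algebraic constraint \eqref{nonlinear} then forces $\mathrm{supp}\,\nu_x \subset M$ for a.e. $x$. The objective is to show $\nu_x = \delta_{\bar z(x)}$ a.e., after which the equi-integrability coming from the $L^p$ bound upgrades weak convergence to strong convergence in $L^q$ for $1\le q<p$, which is the asserted relative compactness; this last step is routine.

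The substance lies in reducing the Dirac-mass property to a $\Lambda$-convexity statement. Using the linear constraint \eqref{linear}, the div--curl lemma applied to bilinear pairings built from the matrices $A_j$ shows that every quadratic form $Q$ on $\mathbb{R}^N$ whose restriction to the wave cone $\Lambda$ is nonnegative is weakly lower semicontinuous along constrained sequences, so that $\langle \nu_x, Q\rangle \ge Q(\langle \nu_x, \mathrm{id}\rangle)$ for a.e. $x$; equivalently, one passes to Tartar's H-measures (microlocal defect measures), which propagate in the variable dual to $\Lambda$ and whose support quantifies the defect of strong convergence. The key step I would attempt is to argue that, because the separation hypothesis $\Lambda_a \cap M = \{a\}$ for all $a$ is exactly $(M-M)\cap\Lambda=\{0\}$, the $\Lambda$-convex (lamination-convex) hull of $\mathrm{supp}\,\nu_x$ collapses to a single point, which forces $\nu_x$ to be Dirac. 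Concretely one tries to do this by testing $\nu_x$ against the family of $\Lambda$-affine null Lagrangians together with the $\Lambda$-nonnegative quadratics, or by an inductive ``no lamination'' argument on the dimension of $\mathrm{supp}\,\nu_x$.

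The main obstacle — and it is fatal in general — is that the implication ``$M$ has no $\Lambda$-connections'' $\Longrightarrow$ ``the $\Lambda$-convex hull of $M$ is trivial'' is \emph{false}. Already in the curl-free case ($z = \nabla u$, $\Lambda$ the rank-one cone), Tartar's $T_4$ configurations furnish four matrices with no pairwise rank-one connections yet a non-trivial lamination-convex hull, and these support non-trivial homogeneous gradient Young measures — indeed, by the convex-integration constructions of M\"uller--\v{S}ver\'ak and Sz\'ekelyhidi, even exact oscillating solutions — so $\nu_x$ need not be a Dirac mass. Consequently the program above cannot close unconditionally; what it \emph{does} prove is the conjecture under the extra assumption that the $\Lambda$-convex hull of $M$ equals $M$ — for instance when $M$ lies on a line whose direction avoids $\Lambda$, or when $M$ is one-dimensional and in general position with respect to $\Lambda$, or for the structured constraint pairs arising in this paper, where the quadratic $E_n\cdot B_n$ together with the sign condition \eqref{add} supplies the missing $\Lambda$-nonnegative form and recovers \Cref{gcompact}. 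Thus the honest outcome of this plan is that the div--curl/Young-measure scheme reduces the conjecture to the $\Lambda$-convexity of $M$, which holds in the structured cases relevant here but fails in general precisely because of $T_4$-type configurations.
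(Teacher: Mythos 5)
There is nothing in the paper to compare your proposal against: the statement you were given is Tartar's conjecture, quoted verbatim in a closing remark as an \emph{open} problem, and the authors explicitly say that ``the final conclusion regarding compactness is still under investigation.'' The paper's actual compactness results (\Cref{compactness}, \Cref{gcompact}) deliberately bypass the conjecture by a direct div--curl argument using the specific entropies $E_n$, $B_n$, the sign condition \eqref{add}, and the boundary bound, rather than by any general wave-cone/constitutive-manifold separation principle. So your submission should not be read as a proof attempt to be checked against a reference proof; it is an assessment of the conjecture itself.

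As such an assessment, it is essentially sound. You correctly translate the hypothesis $\Lambda_a\cap M=\{a\}$ into $(M-M)\cap\Lambda=\{0\}$, correctly identify Tartar's quadratic theorem (weak lower semicontinuity of $\Lambda$-nonnegative quadratic forms under the constraint \eqref{linear} with $H^{-1}$-compact right-hand side) as the available tool, and correctly locate the obstruction: absence of $\Lambda$-connections does not force triviality of the $\Lambda$-convex hull, with $T_4$ configurations in the rank-one cone as the standard witness; this is precisely the dividing line identified in the literature (e.g.\ Faraco--Sz\'ekelyhidi in the $2\times 2$ setting, and the convex-integration constructions of M\"uller--\v{S}ver\'ak and Sz\'ekelyhidi). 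Two caveats: first, since the general statement admits such counterexamples, what you have is a disproof-in-general plus conditional positive results, not a proof of the statement as written -- which is the honest best one can do, but should be stated as such; second, your remark that the structured case of this paper is recovered because $E_n\cdot B_n$ ``supplies the missing $\Lambda$-nonnegative form'' is only a heuristic -- the paper's proof of \Cref{gcompact} needs the sign assumption \eqref{add} and the boundary estimate to get $\dive B_n$ compact in $H^{-1}$ at all, and this is logically prior to, not a consequence of, any Young-measure localization. The authors themselves note (citing their observation in \cite{NY22}) that the separation of the translated wave cone from the constitutive manifold holds for this model, yet they still cannot conclude compactness from it -- consistent with your diagnosis that separation alone is not enough.
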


For a sequence with bounded energy $\left( \ref%
{3denergy}\right) ,$ one may form a wave cone from \eqref{xeq}, \eqref{yeq}, and $\curl \nabla_{\bot} u=0$ and construct a constitutive manifold from suitable entropy conditions. Our initial observation \cite{NY22} shows the translated wave cone constructed this way is separated from the constitutive manifold. The final conclusion regarding compactness is still under investigation. 
\end{remark}
}

\textbf{ACKNOWLEDGEMENTS} We thank Robert V. Kohn for bringing this problem
into our interest.  M.N.'s research is supported by the NSF grant RTG-DMS 1840314. X.Y's research is supported by a
Research Excellence Grant and a CLAS Dean's summer research grant  from University of Connecticut. X.Y also thanks Fengbo Hang for helpful discussions on
Tartar's compensated compactness results. {\color{black} Both authors thank the anonymous referee for the thorough review and helpful comments.}

{\color{black}\textbf{DATA AVAILABILITY STATEMENT} Data sharing not applicable to this article as no datasets were generated or analysed during the current study.}

\FloatBarrier 
\bibliographystyle{siam}
\bibliography{references}

\end{document}